\newcommand{\tdun}[1]{\begin{picture}(10,5)(-2,-1)
\put(0,0){\circle*{2}}
\put(3,-2){\tiny #1}
\end{picture}}
\newcommand{\tddeux}[2]{\begin{picture}(12,10)(0,-1)
\put(3,0){\circle*{2}}
\put(3,5){\circle*{2}}
\put(3,0){\line(0,1){5}}
\put(6,-2){\tiny #1}
\put(6,3){\tiny #2}
\end{picture}}
\newcommand{\tdtroisun}[3]{\begin{picture}(20,12)(-5,-1)
\put(3,0){\circle*{2}}
\put(6,7){\circle*{2}}
\put(0,7){\circle*{2}}
\put(-0.65,0){$\vee$}
\put(5,-2){\tiny #1}
\put(9,5){\tiny #2}
\put(-5,5){\tiny #3}
\end{picture}}
\newcommand{\tdtroisdeux}[3]{\begin{picture}(12,15)(-2,-1)
\put(0,0){\circle*{2}}
\put(0,5){\circle*{2}}
\put(0,10){\circle*{2}}
\put(0,0){\line(0,1){5}}
\put(0,5){\line(0,1){5}}
\put(3,-2){\tiny #1}
\put(3,3){\tiny #2}
\put(3,9){\tiny #3}
\end{picture}}
\newcommand{\tdquatredeux}[4]{\begin{picture}(20,20)(-5,-1)
\put(3,0){\circle*{2}}
\put(6,7){\circle*{2}}
\put(0,7){\circle*{2}}
\put(0,14){\circle*{2}}
\put(-.65,0){$\vee$}
\put(0,7){\line(0,1){7}}
\put(5,-2){\tiny #1}
\put(9,5){\tiny #2}
\put(-5,5){\tiny #3}
\put(-5,12){\tiny #4}
\end{picture}}
\newcommand{\pdtroisun}[3]{\begin{picture}(20,12)(-5,-1)
\put(3,7){\circle*{2}}
\put(-0.65,0){$\wedge$}
\put(6,0){\circle*{2}}
\put(0,0){\circle*{2}}
\put(5,5){\tiny #1}
\put(-5,-2){\tiny #2}
\put(9,-2){\tiny #3}
\end{picture}}
\newcommand{\pdcinq}[5]{\begin{picture}(20,23)(-5,-1)
\put(3,0){\circle*{2}}
\put(-0.65,0){$\vee$}
\put(6,7){\circle*{2}}
\put(0,7){\circle*{2}}
\put(3,14){\circle*{2}}
\put(3,21){\circle*{2}}
\put(3,14){\line(0,1){7}}
\put(-0.65,7){$\wedge$}
\put(5,-2){\tiny #1}
\put(9,5){\tiny #2}
\put(-5,5){\tiny #3}
\put(5,12){\tiny #4}
\put(5,19){\tiny #5}
\end{picture}}
\title{The Hopf algebra of finite topologies and $T$-partitions}
\author{Lo\"\i c Foissy}
\address{Fédération de Recherche Mathématique du Nord Pas de Calais FR 2956\\
Laboratoire de Mathématiques Pures et Appliquées Joseph Liouville\\
Université du Littoral Côte d'opale\\ Centre Universitaire de la Mi-Voix\\ 50, rue Ferdinand Buisson, CS 80699\\ 62228 Calais Cedex, France\\
email: foissy@lmpa.univ-littoral.fr}
\author{Claudia Malvenuto}
\address{Dipartimento di Matematica\\ Sapienza Universit\`a di Roma\\ P.le A. Moro 2\\ 00185, Roma, Italy\\
email: claudia@mat.uniroma1.it}
\newtheorem{defi}{\indent Definition}
\newtheorem{lemma}[defi]{\indent Lemma}
\newtheorem{cor}[defi]{\indent Corollary}
\newtheorem{theo}[defi]{\indent Theorem}
\newtheorem{prop}[defi]{\indent Proposition}
\newcommand{\FQSym}{\mathbf{FQSym}}
\newcommand{\h}{\mathbf{H}}
\newcommand{\K}{\mathbb{K}}
\renewcommand{\L}{\mathcal{L}}
\newcommand{\N}{\mathbb{N}}
\renewcommand{\P}{\mathcal{P}}
\renewcommand{\SS}{\mathcal{S}}
\newcommand{\T}{\mathcal{T}}
\newcommand{\TT}{\mathbf{T}}
\renewcommand{\S}{\mathfrak{S}}
\newcommand{\SP}{\mathbf{SP}}
\newcommand{\QSym}{\mathbf{QSym}}
\newcommand{\WQSym}{\mathbf{WQSym}}
\newcommand{\notsim}{\sim \hspace{-2mm} \mid\hspace{0mm}_\T \hspace{1mm}}
\def\shuff#1#2{\mathbin{\hbox{\vbox{\hbox{\vrule \hskip#2 \vrule height#1 width 0pt}\hrule}\vbox{\hbox{\vrule \hskip#2 \vrule height#1 width 0pt\vrule }\hrule}}}}
\def\shuffl{{\mathchoice{\shuff{5pt}{3.5pt}}{\shuff{5pt}{3.5pt}}{\shuff{3pt}{2.6pt}}{\shuff{3pt}{2.6pt}}}}
\def\shuffle{\, \shuffl \,}
\begin{document}

\maketitle

ABSTRACT. A noncommutative and noncocommutative Hopf algebra on finite topologies $\h_\TT$ is introduced and studied
(freeness, cofreeness, self-duality$\ldots$). Generalizing Stanley's definition of $P$-partitions associated to a special poset, we
define the notion of $T$-partitions associated to a finite topology, and deduce a Hopf algebra morphism from $\h_\TT$ to the Hopf algebra
of packed words $\WQSym$. Generalizing Stanley's decomposition by linear extensions, we deduce a factorization of this morphism,
which induces a combinatorial isomorphism from the shuffle product to the quasi-shuffle product of $\WQSym$.
It is strongly related to a partial order on packed words, here described and studied.\\

KEYWORDS. Finite topologies, combinatorial Hopf algebras, packed words, partitions. \\

AMS CLASSIFICATION. 16T05, 06A11, 54A10.

\tableofcontents

\section*{Introduction}

In his thesis \cite{Stanley}, Stanley introduced the notion of $(P,\omega,m)$-partition associated to a $(P,\omega)$ poset. 
More precisely, a $(\P,\omega)$ poset, or equivalently a special poset, is a finite set $(\P,\leq_\P,\leq)$ with two orders, 
the second being total, see section \ref{section1.2} for examples.
A $(P,\omega,m)$-partition, or, briefly, a $P$-partition, associated to a special poset $\P$ is a map $f:\P\longrightarrow \N$, such that:
\begin{enumerate}
\item If $i\leq_\P j$ in $\P$, then $f(i)\leq f(j)$.
\item If $i\leq_\P j$ and $i>j$ in $\P$, then $f(i)<f(j)$.
\end{enumerate}
Stanley proved \cite{Stanley,Malvenuto} that the set of $P$-partitions of  $\P$ can be decomposed into a disjoint family of subsets
indexed by the set of linear extensions of the partial order $\leq_\P$. \\

Special posets are organized as a Hopf algebra $\h_\SP$, described in \cite{MR2} as a subobject of the Hopf algebra of double posets,
that is to say finite sets with two partial orders. Linear extensions are used to define a Hopf algebra morphism $L$ from $\h_\SP$ 
to the Malvenuto-Retenauer Hopf algebra of permutations $\FQSym$ \cite{MR1,MR3,DHT}. Considering $P$-partitions which are packed words
(which allows to find all $P$-partitions), it is possible to define a Hopf algebra morphism $\Gamma$ from $\h_\SP$ to $\WQSym$, 
the Hopf algebra of packed words. Then Stanley's decomposition allows to define an injective Hopf algebra morphism
$\varphi:\FQSym\longrightarrow \WQSym$, such that the following diagram commutes:
$$\xymatrix{\h_\SP\ar[r]^L \ar[rd]_\Gamma&\FQSym\ar[d]^\varphi\\
&\WQSym}$$\\

Our aim in this the present text is a generalization of Stanley's theorem on $P$-partitions and its applications to combinatorial Hopf algebras.
We here replace special posets by special preposets $(\P,\leq_\P,\leq)$, where $\leq_\P$ is a preorder, that is to say a reflexive  and transitive
relation, and $\leq$ is a total order. By Alexandroff's correspondence, these correspond to topologies on finite sets $[n]=\{1,\ldots,n\}$. 
A construction of a Hopf algebra on finite topologies (up to homeomorphism) is done in \cite{FMP}, where one also can find a brief historic of the subject.
We apply the same construction here and obtain a Hopf algebra $\h_\TT$ on finite topologies, which is noncommutative and noncocommutative.
It is algebraically studied in section \ref{section2}; we prove its freeness and cofreeness (proposition \ref{prop5} and theorem \ref{theo7}), 
show that the Hopf algebra of special posets is both a subalgebra and a quotient of $\h_\TT$ via the construction of a family 
of Hopf algebra morphisms $\theta_q$ (proposition \ref{prop8}). 
A (degenerate) Hopf pairing is also defined on $\h_\TT$, with the help of Zelevinsky's pictures, extending the pairing on special posets of \cite{MR2}. 
The set of topologies on a given set is totally ordered by the refinement; 
using this ordering and a Möbius inversion, we define another basis of  $\h_\TT$, called the \emph{ribbon basis}. 
The product and the coproducts are described in this new basis (theorem \ref{theo12}).\\

The notions of $T$-partitions and linear extensions of a topology are defined in section \ref{section4}. A $T$-partition of a topology $\T$ on the set $[n]$
is introduced in definition \ref{defi13}. Namely, if $\leq_\T$ is the preorder associated to the topology $\T$, a generalized $T$-partition of $\T$
is a surjective map $f:[n]\longrightarrow [p]$ such that: 
\begin{itemize}
\item if $i\leq_\T j$, then $f(i)\leq f(j)$.
\end{itemize}
The $T$-partition $f$ is \emph{strict} if:
\begin{itemize}
\item If $i\leq_\T j$, $i>j$ and not $j\leq_\T i$, then $f(i)<f(j)$.
\item If $i<j<k$, $i\leq_\T k$, $k\leq_\T i$ and $f(i)=f(j)=f(k)$, then $i\leq_\T j$, $j\leq_\T i$, $j\leq_\T k$ and $k\leq_\T i$.
\end{itemize}
The last condition, which is empty for special posets, is necessary to obtain an equivalent of Stanley's decomposition,
as it will be explained later. We now identify any $T$-partition $f$ associated to the topology $\T$ on $[n]$ with the word $f(1)\ldots f(n)$.
A family of Hopf algebra morphisms $\Gamma_{(q_1,q_2,q_3)}$ from $\h_\TT$ to $\WQSym$,
parametrized by triples of scalars, is defined in proposition \ref{prop14}. In particular, for any finite topology $\T$:
\begin{align*}
\Gamma_{(1,1,1)}(\T)&=\sum_{\scriptsize \mbox{$f$ generalized $T$-partition of $\TT$}} f,&
\Gamma_{(1,0,0)}(\T)&=\sum_{\scriptsize \mbox{$f$ strict $T$-partition of $\TT$}} f.
\end{align*}
Linear extensions are introduced in definition \ref{defi15}. They are used to defined a Hopf algebra morphism $L:\h_\TT\longrightarrow \WQSym$, 
up to a change of the product of $\WQSym$: one has to replace its usual product by the shifted shuffling product $\shuffle$, used in \cite{FPT}.
We then look for an equivalent of Stanley's decomposition theorem of $P$-partitions, reformulated in terms of Hopf algebras, that is to say
we look for a Hopf algebra morphism $\varphi_{(q_1,q_2,q_3)}$ making the following diagram commute:
$$\xymatrix{\h_\TT\ar[r]^{L\hspace{15mm}}\ar[rd]_{\Gamma_{(q_1,q_2,q_3)}}&(\WQSym,\shuffle ,\Delta)\ar[d]^{\varphi_{(q_1,q_2,q_3)}}\\
&(\WQSym,.,\Delta)}$$
We prove in proposition \ref{prop21} that such a $\varphi_{(q_1,q_2,q_3)}$ exists if, and only if, $(q_1,q_2,q_3)=(1,0,0)$ or $(0,1,0)$,
which justifies the introduction of strict $T$-partitions. The morphism $\varphi_{(1,0,0)}$ is defined in proposition \ref{prop19}, with the help of a partial order 
on packed words introduced in definition \ref{defi17}; the set decomposition of $T$-partitions is stated in corollary \ref{cor20}. 
Finally, the partial order on packed words is studied in section \ref{section4.5}, with a combinatorial application in corollary \ref{cor26}.\\

The text is organized as follows. The first section recalls the construction of the Hopf algebras $\WQSym$, $\FQSym$ and $\h_\SP$.
The second section deals with the Hopf algebra of topologies and its algebraic study; the ribbon basis is the object of the third section.
The equivalent of Stanley's decomposition, from a combinatorial and a Hopf algebraic point of view, is the object of the last section,
together with the study of the partial order on packed words. \\

{\bf Aknowledgment.} The research leading these results was partially supported by the French National Research Agency under the reference
ANR-12-BS01-0017.\\

{\bf Notations.} \begin{itemize}
\item We work on a commutative base field $\K$, of any characteristic. Any vector space, coalgebra, algebra$\ldots$ of this text is taken over $\K$.
\item For all $n \geq 0$, we put $[n]=\{1,\ldots,n\}$. In particular, $[0]=\emptyset$. We denote by $\N_{>0}$ the set of strictly positive integers.
\end{itemize}

\section{Reminders}

\subsection{$\WQSym$ and $\FQSym$}

Let us first recall the construction of $\WQSym$ \cite{NovelliThibon}. 
\begin{itemize}
\item A packed word is a word $f$ whose letters are in $\N_{>0}$, such that for all $1\leq i\leq j$, 
$$\mbox{$j$ appears in $f$} \Longrightarrow \mbox{$i$ appears in $f$}.$$
Here are the packed words of length $\leq 3$:
$$1=\emptyset;(1); (12),(21),(11);(123),(132),(213),(231),(312),(321),$$
$$(122),(212),(221),(112),(121),(211),(111).$$
\item Let $f=f(1)\ldots f(n)$ be a word whose letters are in $\N_{>0}$. There exists a unique increasing bijection $\phi$
from $\{f(1),\ldots,f(n)\}$ into a set $[m]$. The packed word $Pack(f)$ is $\phi(f(1))\ldots \phi(f(n))$.
\item If $f$ is a word whose letters are in $\N_{>0}$, and $I$ is a subset of $\N_{>0}$, then $f_{\mid I}$ is the subword of $f$ obtained by keeping
only the letters of $f$ which are in $I$.
\end{itemize}

As a vector space, a basis of $\WQSym$ is given by the set of packed words. Its product is defined as follows:
if $f$ and $f'$ are packed words of respective lengths $n$ and $n'$:
$$f.f'=\sum_{\substack{\mbox{\scriptsize $f''$ packed word of length $n+n'$},\\
Pack(f''(1)\ldots f''(n))=f,\\Pack(f''(n+1)\ldots f''(n+n'))=f'}} f''.$$
For example:
\begin{align*}
(112).(12)&=(11212)+(11213)+(11214)+(11223)+(11224)\\
&+(11234)+(11312)+(11323)+(11324)+(11423)\\
&+(22312)+(22313)+(22314)+(22413)+(33412).
\end{align*}

The unit is the empty packed word $1=\emptyset$. 

If $f$ is a packed word, its coproduct in $\WQSym$ is defined by:
$$\Delta(f)=\sum_{k=0}^{\max(f)} f_{\mid [k]} \otimes Pack(f_{\mid \N_{>0}\setminus [k]}).$$
For example: 
\begin{align*}
\Delta((511423))&=1\otimes (511423)+(1)\otimes (4312)+(112)\otimes (321)\\
&+(1123)\otimes (21)+(11423)\otimes (1)+(511423)\otimes 1.
\end{align*}
Then $(\WQSym,.,\Delta)$ is a graded, connected Hopf algebra.\\

We denote by $j$ the involution on packed words defined in the following way: if $f=f(1)\ldots f(n)$ is a packed word
of length $n$, there exists a unique decreasing bijection $\varphi$ from $\{f(1),\ldots,f(n)\}$ into a set $[l]$. We put $j(f)=\varphi(f(1))\ldots \varphi(f(n))$. 
For example, $j((65133421))=(12644356)$. The extension of $j$ to $\WQSym$ is a Hopf algebra isomorphism from $(\WQSym,.,\Delta)$
to $(\WQSym,.,\Delta^{op})$.\\

In particular, permutations are packed words. Note that the subspace of $\WQSym$ generated by all the permutations is a coalgebra, but not a subalgebra:
for example, $(1).(1)=(12)+(21)+(11)$. On the other side, the subspace of $\WQSym$ generated by packed words which are not permutations
is a biideal, and the quotient of $\WQSym$ by this biideal is the Hopf algebra of permutations $\FQSym$ \cite{MR1,DHT}.
As a vector space, a basis of $\FQSym$ is given by the set of all permutations; if $\sigma$ and $\sigma'$ are two permutations
of respective lengths $n$ and $n'$,
$$\sigma.\sigma'=\sum_{\substack{\sigma''\in \S_{n+n'},\\ Pack(\sigma''(1)\ldots \sigma''(n))=\sigma,\\ Pack(\sigma''(n+1)\ldots \sigma''(n+n'))=\sigma'}}
\sigma''=\sum_{\epsilon \in Sh(n,n')} \epsilon \circ (\sigma \otimes \tau) ,$$
where $Sh(n,n')$ is the set of $(n,n')$-shuffles, that is to say permutations $\epsilon \in \S_{n+n'}$ such that $\epsilon(1)<\ldots <\epsilon(n)$
and $\epsilon(n+1)<\ldots<\epsilon(n+n')$. For example:
\begin{align*}
(132).(21)&=(13254)+(14253)+(15243)+(14352)+(15342)\\
&+(15432)+(24351)+(25341)+(25431)+(35421).
\end{align*}
If $\sigma \in \S_n$, its coproduct is given by:
$$\Delta(\sigma)=\sum_{k=0}^{n} \sigma_{\mid [k]} \otimes Pack(\sigma_{\mid \N_{>0}\setminus [k]}).$$
For example: 
\begin{align*}
\Delta((51423))&=1\otimes (51423)+(1)\otimes (4312)+(12)\otimes (321)\\
&+(123)\otimes (21)+(1423)\otimes (1)+(51423)\otimes 1.
\end{align*}
The canonical epimorphism from $\WQSym$ to $\FQSym$ is denoted by $\varpi$.\\

We shall need the standardisation map, which associates a permutation  to any packed word. If $f=f(1)\ldots f(n)$ is a packed word, 
$Std(f)$ is the unique permutation $\sigma \in \S_n$ such that for all $1\leq i,j\leq n$:
\begin{align*}
f(i)<f(j) &\Longrightarrow \sigma(i)<\sigma(j),\\
(f(i)=f(j) \mbox{ and } i<j)&\Longrightarrow \sigma(i)<\sigma(j).
\end{align*}
In particular, if $f$ is a permutation, $Std(f)=f$. Here are examples of standardization of packed words which are not permutations:
\begin{align*}
Std(11)&=(12),&Std(122)&=(123), &Std(212)&=(213),&Std(221)&=(231),\\
Std(112)&=(123),&Std(121)&=(132),&Std(211)&=(312),&Std(111)&=(123).
\end{align*}

\subsection{Special posets}

\label{section1.2} Let us briefly recall the construction of the Hopf algebra on special posets \cite{MR2,Foissy}. A special (double) poset is a family
$(P,\leq,\leq_{tot})$, where $P$ is a finite set, $\leq$ is a partial order on $P$ and $\leq_{tot}$ is a total order on $P$. 
For example, here are the special posets of cardinality $\leq 3$: they are represented by the Hasse graph of $\leq$,
the total order $\leq_{tot}$ is given by the indices of the vertices.
$$1=\emptyset\:; \tdun{$1$}; \tdun{$1$}\tdun{$2$},\tddeux{$1$}{$2$},\tddeux{$2$}{$1$};$$
$$\tdun{$1$}\tdun{$2$}\tdun{$3$},\tddeux{$1$}{$2$}\tdun{$3$},\tddeux{$1$}{$3$}\tdun{$2$},
\tddeux{$2$}{$1$}\tdun{$3$},\tddeux{$2$}{$3$}\tdun{$1$},\tddeux{$3$}{$1$}\tdun{$2$},$$
$$\tddeux{$3$}{$2$}\tdun{$1$}, \tdtroisun{$1$}{$3$}{$2$},\tdtroisun{$2$}{$3$}{$1$},\tdtroisun{$3$}{$2$}{$1$},
\pdtroisun{$1$}{$2$}{$3$},\pdtroisun{$2$}{$1$}{$3$},\pdtroisun{$3$}{$1$}{$2$},
\tdtroisdeux{$1$}{$2$}{$3$},\tdtroisdeux{$1$}{$3$}{$2$},\tdtroisdeux{$2$}{$1$}{$3$},
\tdtroisdeux{$2$}{$3$}{$1$},\tdtroisdeux{$3$}{$1$}{$2$},\tdtroisdeux{$3$}{$2$}{$1$}.$$
If $P=(P,\leq,\leq_{tot})$ and $Q=(Q,\leq,\leq_{tot})$ are two special posets, we define a special posets $P.Q$ in the following way:
\begin{itemize}
\item As a set, $P.Q=P \sqcup Q$.
\item If $i,j \in P$, then $i\leq j$ in $P.Q$ if, and only if, $i\leq j$ in $P$, and $i \leq_{tot} j$ in $P.Q$ if, and only if, $i \leq_{tot} j$ in $P$.
\item If $i,j \in Q$, then $i\leq j$ in $P.Q$ if, and only if, $i\leq j$ in $Q$, and $i \leq_{tot} j$ in $P.Q$ if, and only if, $i \leq_{tot} j$ in $Q$.
\item If $i\in P$ and $j \in Q$, then $i$ and $j$ are not comparable for $\leq$, and $i\leq_{tot} j$.
\end{itemize}
For example, $\tddeux{$1$}{$3$}\tdun{$2$}.\tdtroisun{$1$}{$3$}{$2$}=\tddeux{$1$}{$3$}\tdun{$2$}\tdtroisun{$4$}{$6$}{$5$}$.
The vector space generated by the set of (isoclasses) of special posets is denoted by $\h_\SP$. This product is bilinearly extended
to $\h_\SP$, making it an associative algebra. The unit is the empty special poset $1=\emptyset$. \\

If $P$ is a special poset and $I \subseteq P$, then by restriction $I$ is a special poset. We shall say that $I$ is an ideal of $P$ if for all $i,j \in P$:
$$(i\in I\mbox{ and } i\leq j)\Longrightarrow j\in I.$$
We give $\h_\SP$ the coproduct defined by:
$$\Delta(P)=\sum_{\mbox{\scriptsize $I$ ideal of $P$}} (P\setminus I) \otimes I.$$
For example:
$$\Delta(\tdquatredeux{$2$}{$1$}{$3$}{$4$})=\tdquatredeux{$2$}{$1$}{$3$}{$4$}\otimes 1+1\otimes \tdquatredeux{$2$}{$1$}{$3$}{$4$}
+\tdtroisun{$2$}{$1$}{$3$}\otimes \tdun{$1$}+\tdtroisdeux{$1$}{$2$}{$3$}\otimes \tdun{$1$}+\tddeux{$1$}{$2$}\otimes \tdun{$1$}\tdun{$2$}
+\tddeux{$2$}{$1$} \otimes \tddeux{$1$}{$2$}+\tdun{$1$}\otimes \tddeux{$2$}{$3$}\tdun{$1$}.$$

Let $P=(P,\leq,\leq_{tot})$ be a special poset. A linear extension of $P$ is a total order $\leq'$ extending the partial order $\leq$.
Let $\leq'$ be a linear extension of $P$. Up to a unique isomorphism, we can assume that $P=[n]$ as a totally ordered set. 
For any $i \in [n]$, we denote by $\sigma(i)$ the index of $i$ in the total order $\leq'$. Then $\sigma \in \S_n$, and we now identify $\leq'$ and $\sigma$.
The following map is a surjective Hopf algebra morphism:
$$L:\left\{\begin{array}{rcl}
\h_\SP&\longrightarrow&\FQSym\\
P&\longrightarrow&\displaystyle \sum_{\mbox{\scriptsize $\sigma $ linear extension of $P$}} \sigma.
\end{array}\right.$$ 
For example:
\begin{align*}
L(\pdtroisun{$1$}{$2$}{$3$})&=(312)+(321),&L(\pdtroisun{$2$}{$1$}{$3$})&=(132)+(231),&L(\pdtroisun{$3$}{$1$}{$2$})&=(123)+(213),\\
L(\tdtroisun{$1$}{$3$}{$2$})&=(123)+(132),&L(\tdtroisun{$2$}{$3$}{$1$})&=(213)+(312),&L(\tdtroisun{$3$}{$2$}{$1$})&=(231)+(321),\\
L(\tdtroisdeux{$1$}{$2$}{$3$})&=(123),&L(\tdtroisdeux{$2$}{$1$}{$3$})&=(213),&L(\tdtroisdeux{$3$}{$1$}{$2$})&=(231),\\
L(\tdtroisdeux{$1$}{$3$}{$2$})&=(132),&L(\tdtroisdeux{$2$}{$3$}{$1$})&=(312),&L(\tdtroisdeux{$3$}{$2$}{$1$})&=(321).
\end{align*}

Let $\P$ be a special poset. With the help of the total order of $\P$, we identify $\P$ with the set $[n]$, where $n$ is the cardinality of $\P$.
A $P$-partition of $\P$ is a map $f:\P\longrightarrow [n]$ such that:
\begin{enumerate}
\item If $i\leq_\P j$ in $\P$, then $f(i)\leq f(j)$.
\item If $i\leq_\P j$ and $i>j$ in $\P$, then $f(i)<f(j)$.
\end{enumerate}
We represent a $P$-partition of $\P$ by the word $f(1)\ldots f(n)$. Obviously, if $w=w_1\ldots w_n$ is a word, it is a $P$-partition of the special poset $\P$
if, and only if, $Pack(w)$ is a $P$-partition of $\P$. We define:
$$\Gamma:\left\{\begin{array}{rcl}
\h_\SP&\longrightarrow&\WQSym\\
\P&\longrightarrow&\displaystyle \sum_{\scriptsize \mbox{$w$ packed word, $P$-partition of $\P $}} w.
\end{array}\right.$$
For example:
\begin{align*}
\Gamma(\tdun{$1$})&=(1),\\
\Gamma(\tdun{$1$}\tdun{$2$})&=(12)+(21)+(11),\\
\Gamma(\tddeux{$1$}{$2$})&=(12)+(11),\\
\Gamma(\tddeux{$2$}{$1$})&=(21),\\
\Gamma(\tdtroisun{$1$}{$3$}{$2$})&=(123)+(132)+(122)+(112)+(121)+(111),\\
\Gamma(\tdtroisun{$2$}{$3$}{$1$})&=(213)+(312)+(212)+(211),\\
\Gamma(\tdtroisun{$3$}{$2$}{$1$})&=(231)+(321)+(221).
\end{align*}
We shall prove in section \ref{section4} that $\Gamma$ is a Hopf algebra morphism. \\

{\bf Remark}. There is a natural surjective Hopf algebra morphism $\varrho$ from $\WQSym$ to the Hopf algebra of quasisymmetric functions $\QSym$
 \cite{NovelliThibon2}. For any special poset $\P$:
$$\varrho\circ \Gamma(\P)=\sum_{\scriptsize\mbox{$f$ $P$-partition of $w$}} x_{f(1)}\ldots x_{f(n)}\in \QSym \subseteq \mathbb{Q}[[x_1,x_2,\ldots]].$$
So $\varrho \circ \Gamma(\P)$ is the generating function of $P$ in the sense of \cite{Stanley}.\\

We shall also prove that Stanley's decomposition theorem can be reformulated in the following way: let us consider the map
$$\varphi:\left\{\begin{array}{rcl}
\FQSym&\longrightarrow&\WQSym\\
\sigma&\longrightarrow&\displaystyle \sum_{\scriptsize \mbox{$w$ packed word, $Std(w)=\sigma $}} w.
\end{array}\right.$$
Then $\varphi$ is an injective Hopf algebra morphism, such that $\varphi \circ L=\Gamma$. Combinatorially speaking, for any special poset $\P$:
$$\{\mbox{$P$-partition of $\P $}\}=\bigsqcup_{\scriptsize \mbox{$\sigma $ linear extension of $\P $}}\{w\mid Pack(w)=\sigma\}.$$

\subsection{Infinitesimal bialgebras}

An infinitesimal bialgebra  \cite{LodayRonco} is a triple $(A,m,\Delta)$ such that:
\begin{itemize}
\item $(A,m)$ is a unitary, associative algebra.
\item $(A,\Delta)$ is a counitary, coassociative algebra.
\item For all $x,y \in A$, $\Delta(xy)=(x\otimes 1)\Delta(y)+\Delta(x)(1\otimes y)-x\otimes y$.
\end{itemize}
The standard examples are the tensor algebras $T(V)$, with the concatenation product and the deconcatenation coproduct.
By the rigidity theorem of \cite{LodayRonco}, these are essentially the unique examples:

\begin{theo}\label{theo1}
Let $A$ be a graded, connected, infinitesimal bialgebra. Then $A$ is isomorphic to $T(Prim(A))$ as an infinitesimal bialgebra.
\end{theo}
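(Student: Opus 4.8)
The plan is to realize the isomorphism as the canonical algebra morphism $T(Prim(A))\to A$ determined by the inclusion of the primitives, and then to verify that it intertwines the coproducts and is bijective, connectedness entering only through the conilpotency of the reduced coproduct. First I would pass to the augmentation ideal $\bar A=\ker\varepsilon$, so that $A=\K 1\oplus \bar A$ with $\bar A=\bigoplus_{n\geq 1}A_n$, and introduce the reduced coproduct $\tilde\Delta(x)=\Delta(x)-x\otimes 1-1\otimes x$ together with $V=Prim(A)=\ker\tilde\Delta$. Expanding the infinitesimal compatibility on $x,y\in\bar A$ yields the central identity
\[ \tilde\Delta(xy)=x\otimes y+(x\otimes 1)\tilde\Delta(y)+\tilde\Delta(x)(1\otimes y), \]
equivalently $\tilde\Delta\circ m=\mathrm{id}+(m\otimes \mathrm{id})(\mathrm{id}\otimes\tilde\Delta)+(\mathrm{id}\otimes m)(\tilde\Delta\otimes\mathrm{id})$ on $\bar A\otimes\bar A$. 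Since $A$ is graded and connected, $\tilde\Delta$ is conilpotent: the iterates $\tilde\Delta^{(k)}\colon\bar A\to\bar A^{\otimes(k+1)}$ vanish on $A_n$ for $k\geq n$, so every series in the $\tilde\Delta^{(k)}$ written below is finite on each element.

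Next I would build the map. As $T(V)$ is the free associative algebra on $V$, the inclusion $V\hookrightarrow A$ extends uniquely to an algebra morphism $\Phi\colon T(V)\to A$, graded for the total degree. To see that $\Phi$ is also a coalgebra morphism, hence a morphism of infinitesimal bialgebras, I would prove by induction on $n$, using the central identity, that for primitive $v_1,\dots,v_n$
\[ \tilde\Delta(v_1\cdots v_n)=\sum_{i=1}^{n-1}(v_1\cdots v_i)\otimes(v_{i+1}\cdots v_n); \]
the base case $n=1$ is $\tilde\Delta v_1=0$, and the inductive step writes $v_1\cdots v_n=(v_1\cdots v_{n-1})v_n$ and uses that $v_n$ is primitive. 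This is exactly the reduced deconcatenation coproduct of $T(V)$, so $\Phi$ intertwines the two coproducts.

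Finally I would prove bijectivity. For injectivity I would use that $T(V)$ with deconcatenation is conilpotent with $Prim(T(V))=V$; a coalgebra morphism out of a conilpotent coalgebra whose restriction to primitives is injective is injective, because a nonzero coideal of a conilpotent coalgebra meets the primitives. As $\Phi|_V$ is the inclusion, $\Phi$ is injective. For surjectivity it suffices that $V$ generate $A$ as an algebra, i.e. that $\bar A=V+\bar A\cdot\bar A$. Here I would introduce the projector
\[ \pi=\sum_{k\geq 0}(-1)^k\,m^{(k)}\circ\tilde\Delta^{(k)}\colon\bar A\to\bar A, \]
with $m^{(k)}\colon\bar A^{\otimes(k+1)}\to\bar A$ the iterated product and $\tilde\Delta^{(k)}$ the iterated reduced coproduct, a finite sum on each element by conilpotency. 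Every term with $k\geq 1$ factors through $m$, so $(\mathrm{id}-\pi)(\bar A)\subseteq\bar A\cdot\bar A$, while the central identity together with coassociativity of $\tilde\Delta$ gives the key relation $\tilde\Delta\circ\pi=0$, whence $\pi(\bar A)\subseteq V$. Writing $x=\pi(x)+(\mathrm{id}-\pi)(x)$ then yields $\bar A=V+\bar A\cdot\bar A$, so $\Phi$ is onto; combined with injectivity, $\Phi$ is an isomorphism of infinitesimal bialgebras.

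I expect the verification of $\tilde\Delta\circ\pi=0$ to be the main obstacle: it is the precise statement that $\pi$ projects onto the primitives, and it requires careful bookkeeping of the interaction between the iterated product and the iterated coproduct through the central identity, organized by coassociativity so that the alternating signs telescope. The remaining ingredients, namely the freeness of $T(V)$, the deconcatenation computation, and the conilpotency lemma used for injectivity, are formal.
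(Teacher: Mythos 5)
Your proof is correct. There is, however, nothing in the paper to compare it with: Theorem \ref{theo1} is stated without proof and attributed to \cite{LodayRonco}, and your argument is essentially the one in that reference --- the algebra morphism $T(Prim(A))\to A$ given by freeness, the deconcatenation formula on products of primitives, conilpotency plus injectivity on primitives for injectivity, and the alternating idempotent $\pi=\sum_{k\ge 0}(-1)^k m^{(k)}\circ\tilde\Delta^{(k)}$ for surjectivity. The one step you flag as the main obstacle does go through cleanly. Iterating your central identity gives, for $x_1,\dots,x_{k+1}$ in the augmentation ideal,
$$\tilde\Delta(x_1\cdots x_{k+1})=\sum_{i=1}^{k}(x_1\cdots x_i)\otimes(x_{i+1}\cdots x_{k+1})+\sum_{i=1}^{k+1}(x_1\cdots x_{i-1}\otimes 1)\,\tilde\Delta(x_i)\,(1\otimes x_{i+1}\cdots x_{k+1}),$$
and applying this to $\tilde\Delta^{(k)}(x)$ and regrouping by coassociativity yields, with $\omega_j=m^{(j)}\circ\tilde\Delta^{(j)}$,
$$\tilde\Delta\circ\omega_k=\Bigl(\sum_{a+b=k-1}\omega_a\otimes\omega_b+\sum_{a+b=k}\omega_a\otimes\omega_b\Bigr)\circ\tilde\Delta ,$$
where the first sum comes from the cut terms and the second from the terms involving $\tilde\Delta(x_i)$. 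Hence the coefficient of $(\omega_a\otimes\omega_b)\circ\tilde\Delta$ in $\tilde\Delta\circ\pi$ is $(-1)^{a+b+1}+(-1)^{a+b}=0$, so $\tilde\Delta\circ\pi=0$ and $\pi$ lands in the primitives, exactly as you need.
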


\section{Topologies on a finite set}

\label{section2}

\subsection{Notations and definitions}

Let $X$ be a set. Recall that a topology on $X$ is a family $\T$ of subsets of $X$, called the open sets of $\T$, such that:
\begin{enumerate}
\item $\emptyset$, $X \in \T$.
\item The union of an arbitrary number of elements of $\T$ is in $\T$.
\item The intersection of a finite number of elements of $\T$ is in $\T$.
\end{enumerate}

Let us recall  from \cite{ErneStege} the bijective correspondence between topologies on a finite set $X$ and preorders on $X$:
\begin{enumerate}
\item Let $\T$ be a topology on the finite set $X$. The relation $\leq_\T$ on $X$ is defined by
$i\leq_\T j$ if any open set of $\T$ which contains $i$ also contains $j$. Then $\leq_\T$ is a preorder, that is to say a reflexive, transitive relation.
Moreover, the open sets of $\T$ are the ideals of $\leq_\T$, that is to say the sets $I\subseteq X$ such that, for all $i,j \in X$:
$$(i\in I\mbox{ and } i\leq_\T j)\Longrightarrow j\in I.$$
\item Conversely, if $\leq$ is a preorder on $X$, the ideals of $\leq$ form a topology on $X$ denoted by $\T_\leq$.
Moreover, $\leq_{T_\leq}=\leq$, and $\T_{\leq_\T}=\T$. Hence, there is a bijection between the set of topologies on $X$ and the 
set of preorders on $X$.
\item Let $\T$ be a topology on $X$. The relation $\sim_\T$ on $X$, defined by $i \sim_\T j$ if $i\leq_\T j$ and $j\leq_\T i$,
is an equivalence on $X$. Moreover, the set $X/\sim_\T$ is partially ordered by the relation defined by
$\overline{i}\leq_\T \overline{j}$ if $i\leq j$. Consequently, we shall represent preorders on $X$ (hence, topologies on $X$)
by the Hasse diagram of $X/\sim_\T$, the vertices being the equivalence classes of $\sim_\T$. 
\end{enumerate}

For example, here are the topologies on $[n]$ for $n\leq 3$:
$$1=\emptyset\:; \tdun{$1$}; \tdun{$1$}\tdun{$2$},\tddeux{$1$}{$2$},\tddeux{$2$}{$1$},\tdun{$1,2$}\hspace{3mm};$$
$$\tdun{$1$}\tdun{$2$}\tdun{$3$},\tddeux{$1$}{$2$}\tdun{$3$},\tddeux{$1$}{$3$}\tdun{$2$},
\tddeux{$2$}{$1$}\tdun{$3$},\tddeux{$2$}{$3$}\tdun{$1$},\tddeux{$3$}{$1$}\tdun{$2$},$$
$$\tddeux{$3$}{$2$}\tdun{$1$}, \tdtroisun{$1$}{$3$}{$2$},\tdtroisun{$2$}{$3$}{$1$},\tdtroisun{$3$}{$2$}{$1$},
\pdtroisun{$1$}{$2$}{$3$},\pdtroisun{$2$}{$1$}{$3$},\pdtroisun{$3$}{$1$}{$2$},
\tdtroisdeux{$1$}{$2$}{$3$},\tdtroisdeux{$1$}{$3$}{$2$},\tdtroisdeux{$2$}{$1$}{$3$},
\tdtroisdeux{$2$}{$3$}{$1$},\tdtroisdeux{$3$}{$1$}{$2$},\tdtroisdeux{$3$}{$2$}{$1$},$$
$$\tdun{$1,2$}\hspace{3mm} \tdun{$3$},\tdun{$1,3$}\hspace{3mm} \tdun{$2$},\tdun{$2,3$}\hspace{3mm} \tdun{$1$},
\tddeux{$1,2$}{$3$}\hspace{3mm},\tddeux{$1,3$}{$2$}\hspace{3mm},\tddeux{$2,3$}{$1$}\hspace{3mm},
\tddeux{$3$}{$1,2$}\hspace{3mm},\tddeux{$2$}{$1,3$}\hspace{3mm},\tddeux{$1$}{$2,3$}\hspace{3mm},
\tdun{$1,2,3$}\hspace{5mm}.$$

The number $t_n$ of topologies on $[n]$ is given by the sequence A000798 in \cite{Sloane}:
$$\begin{array}{|c|c|c|c|c|c|c|c|c|c|c|}
\hline n&1&2&3&4&5&6&7&8&9\\
\hline t_n&1&4&29&355&6\:942&209527&9\:535\:241&642\:779\:354&63\:260\:289\:423\\
\hline \end{array}$$
$$\begin{array}{|c|c|c|c|}
\hline n&10&11&12\\
\hline t_n&8\:977\:053\:873\:043&1\:816\:846\:038\:736\:192&519\:355\:571\:065\:774\:021\\
\hline\end{array}$$

The set of topologies on $[n]$ will be denoted by $\TT_n$, and we put $\displaystyle \TT=\bigsqcup_{n\geq 0} \TT_n$. \\

If $\T$ is a finite topology on a set $X$, then $\iota(\T)=\{X\setminus O\mid O\in \T\}$ is also a finite topology, on the same set $X$.
Consequently, $\iota$ defines a involution of the set $\TT$. The preorder associated to $\iota(\T)$ is $\leq_{\iota(\T)}=\geq_{\T}$.\\

{\bf Notations.} Let $f$ be a packed word of length $n$. We define a preorder $\leq_f$ on $[n]$ by:
$$i\leq_f j \mbox{ if }f(i)\leq f(j).$$
The associated topology is denoted by $\T_f$. The open sets of this topology are the subsets $f^{-1}(\{i,\ldots,\max(f)\})$,
$1\leq i \leq \max(f)$, and $\emptyset$. For example:
$$\T_{(331231)}=\tdtroisdeux{$3,6$}{$4$}{$1,2,5$}\hspace{4mm}.$$

\subsection{Two products on finite topologies}

{\bf Notations.} Let $O\subseteq \N$ and let $n \in \N$. The set $O(+n)$ is the set $\{k+n\mid k\in O\}$.

\begin{defi}
Let $\T\in \TT_n$ and $\T'\in \TT_{n'}$. 
\begin{enumerate}
\item The topology $\T.\T'$ is the topology on $[n+n']$ which open sets are the sets $O \sqcup O'(+n)$, with $O \in \T$ and $O'\in \T'$.
\item The topology $\T \downarrow \T'$ is the topology on $[n+n']$ which open sets are the sets $O \sqcup [n'](+n)$, with $O \in \T$,
and $O'(+n)$, with $O'\in \T'$. 
\end{enumerate}\end{defi}

{\bf Example.} $\tdtroisun{$1$}{$3$}{$2$}.\tddeux{$2$}{$1$}=\tdtroisun{$1$}{$3$}{$2$}\tddeux{$5$}{$4$}$ and
$\tdtroisun{$1$}{$3$}{$2$}\downarrow \tddeux{$2$}{$1$}=\pdcinq{$1$}{$3$}{$2$}{$5$}{$4$}$.

\begin{prop} 
These two products are associative, with $\emptyset=1$ as a common unit.
\end{prop}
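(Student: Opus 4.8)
The plan is to prove everything directly from the description of open sets; the only real content is a bookkeeping identity for the shift operator $O\mapsto O(+n)$. First I would record the two facts that drive both computations: the shift composes additively, $(O(+n'))(+n)=O(+(n+n'))$, and the full second block splits as $[n'+n''](+n)=[n'](+n)\sqcup[n''](+(n+n'))$. Moreover, sets shifted into disjoint ranges of indices are automatically disjoint, which is what justifies writing $\sqcup$ throughout. I would also note in passing (it is implicit in the Definition) that the two prescribed families really are topologies: each contains $\emptyset$ and the whole set and is stable under arbitrary unions and finite intersections, the only nonobvious point being that for the $\downarrow$-family one uses $O'(+n)\subseteq[n'](+n)$ to absorb a second-type open set into a first-type one.

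The unit is immediate. The empty topology on $[0]$ has $\{\emptyset\}$ as its set of open sets, so substituting it on either side of either product and using $O(+0)=O$ together with $[0](+n)=\emptyset$ returns exactly the open sets of the remaining factor; for instance $\emptyset\downarrow\T'$ has open sets $[n']$ (from $O\sqcup[n'](+0)$ with $O=\emptyset$) together with all $O'(+0)=O'$, and since $[n']\in\T'$ this is just $\T'$. Hence $\emptyset=1$ is a two-sided unit for both $.$ and $\downarrow$.

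For associativity of $.$, I would expand both $(\T.\T').\T''$ and $\T.(\T'.\T'')$ for $\T\in\TT_n$, $\T'\in\TT_{n'}$, $\T''\in\TT_{n''}$. Using the shift identity, the open sets of each are exactly the sets $O\sqcup O'(+n)\sqcup O''(+(n+n'))$ with $O\in\T$, $O'\in\T'$, $O''\in\T''$, so the two topologies coincide. For $\downarrow$ the strategy is the same but the bookkeeping is longer because $\T\downarrow\T'$ has open sets of two shapes. Expanding $(\T\downarrow\T')\downarrow\T''$ and $\T\downarrow(\T'\downarrow\T'')$ and simplifying the full blocks with the splitting identity $[n'+n''](+n)=[n'](+n)\sqcup[n''](+(n+n'))$, I expect both to have precisely the three families of open sets
$$O\sqcup[n'](+n)\sqcup[n''](+(n+n')),\quad O'(+n)\sqcup[n''](+(n+n')),\quad O''(+(n+n')),$$
as $O,O',O''$ range over $\T,\T',\T''$; hence the two products agree.

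The main (and essentially only) obstacle is keeping the shifts and the two shapes of open set straight across the two nestings in the $\downarrow$ case; there is no conceptual difficulty. If one prefers a structural argument avoiding the casework, I would instead pass through Alexandroff's correspondence: at the level of preorders, $\T.\T'$ is the disjoint union of $\leq_\T$ and the shifted copy of $\leq_{\T'}$, while $\T\downarrow\T'$ is their ordinal sum, placing every element of the first block below every element of the second. Associativity and the unit property are transparent for these two operations on preorders, at the cost of first translating the open-set descriptions of $.$ and $\downarrow$ into this preorder language.
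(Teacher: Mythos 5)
Your argument is correct, but your primary route is not the one the paper takes: you work directly with open sets and the shift operator, expanding both nestings of each product into explicit families of open sets, whereas the paper's proof is exactly the ``structural argument'' you mention only as an alternative at the end. The paper writes down the preorder associated to $\T.\T'$ (the disjoint union of $\leq_\T$ and the shifted $\leq_{\T'}$) and to $\T\downarrow\T'$ (the ordinal sum), and then observes that both parenthesizations of a triple product yield the same preorder; by the Alexandroff bijection between topologies on $[n]$ and preorders on $[n]$, this settles associativity, and the unit is dismissed as obvious. Your open-set computation is a legitimate substitute and is fully self-contained at the level of the definition as stated; its cost is the bookkeeping with $O\mapsto O(+n)$ and the splitting $[n'+n''](+n)=[n'](+n)\sqcup[n''](+(n+n'))$, together with the (worthwhile, and correctly handled) remark that the prescribed families of sets really are topologies, a point the paper's definition leaves implicit. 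The preorder route buys shorter formulas and makes the two operations recognizable as disjoint union and ordinal sum, at the price of first converting the open-set descriptions into preorder language. Your three displayed families for the $\downarrow$ case are the correct common list of open sets, and your verification of the unit (including the absorption $[n']\in\T'$ in $\emptyset\downarrow\T'$) is sound.
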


\begin{proof} Obviously, for any $\T\in \TT$, $1.\T=\T.1=1\downarrow \T=\T\downarrow 1=\T$.
Let $\T\in \TT_n$, and $\T'\in \TT_{n'}$. The preorder associated to $\T.\T'$ is:
$$\{(i,j)\mid i\leq_\T j\}\sqcup \{(i+n,j+n)\mid i\leq_{\T'} j\}.$$
The preorder associated to $\T\downarrow \T'$ is:
$$\{(i,j)\mid i\leq_\T j\}\sqcup \{(i+n,j+n)\mid i\leq_{\T'} j\}\sqcup \{(i,j)\mid 1\leq i\leq n<j\leq n+n'\}.$$
Let $\T\in \TT_n$, $\T'\in \TT_{n'}$ and $\T''\in \TT_{n''}$.
The preorders associated to $(\T.\T').\T''$ and to $\T.(\T'.\T'')$ are both equal to:
$$\{(i,j)\mid i\leq_\T j\}\sqcup \{(i+n,j+n)\mid i\leq_{\T'} j\}\sqcup \{(i+n+n',j+n+n')\mid i\leq_{\T''}j\}.$$
So $(\T.\T').\T''=\T.(\T'.\T'')$. The preorders associated to $(\T\downarrow \T')\downarrow \T''$ 
and to $\T\downarrow (\T'\downarrow \T'')$ are both equal to:
$$\{(i,j)\mid i\leq_\T j\}\sqcup \{(i+n,j+n)\mid i\leq_{\T'} j\}\sqcup \{(i+n+n',j+n+n')\mid i\leq_{\T''}j\}$$
$$\sqcup\{(i,j)\mid 1\leq i\leq n < j\leq n+n'+n''\}\sqcup\{(i,j)\mid n<i\leq n+n'<j\leq n+n'+n''\}.$$
So $(\T\downarrow \T')\downarrow \T''=\T\downarrow (\T'\downarrow \T'')$. \end{proof}

\begin{defi}\begin{enumerate}
\item We denote by $\h_\TT$ the vector space generated by $\TT$. It is graded, the elements of $\TT_n$ being homogeneous of degree $n$.
We extend the two products defined earlier on $\h_\TT$.
\item Let $\T \in \TT$, different from $1$. 
\begin{enumerate}
\item We shall say that $\T$ is indecomposable if it cannot be written as $\T=\T'.\T''$, with $\T',\T''\neq 1$.
\item We shall say that $\T$ is $\downarrow$-indecomposable if it cannot be written as $\T=\T'\downarrow \T''$, with $\T',\T''\neq 1$.
\item We shall say that $\T$ is bi-indecomposable if it is both indecomposable and $\downarrow$-indecomposable.
\end{enumerate} \end{enumerate}\end{defi}

Note that $(\h_\T,.,\downarrow)$ is a 2-associative algebra \cite{LodayRonco}, that is to say an algebra with two associative products sharing the same unit.

\begin{prop}\label{prop5}\begin{enumerate}
\item The associative algebra $(\h_\TT,.)$ is freely generated by the set of indecomposable topologies.
\item The associative algebra $(\h_\TT,\downarrow)$ is freely generated by the set of $\downarrow$-indecompo\-sable topologies.
\item The 2-associative algebra $(\h_\TT,.,\downarrow)$ is freely generated by the set of bi-indecomposable topologies.
\end{enumerate}\end{prop}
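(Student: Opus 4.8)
The plan is to prove each of the three freeness statements by exhibiting the appropriate decomposition into indecomposables and checking uniqueness, using the preorder description of the products established in the previous proposition. The key observation throughout is that the product $\T.\T'$ on the level of preorders is a disjoint union: the associated preorder on $[n+n']$ never relates an element of $\{1,\dots,n\}$ to an element of $\{n+1,\dots,n+n'\}$ in either direction. Dually, $\T\downarrow\T'$ adds precisely all relations $i\leq j$ from the first block to the second. So the strategy is to read off the decomposition directly from the combinatorics of the preorder.

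First I would treat item (1). Given a topology $\T\in\TT_n$, consider the equivalence relation on $[n]$ generated by comparability in $\leq_\T$, i.e.\ the partition of $[n]$ into the connected components of the comparability graph of $\leq_\T$. I claim each such component is automatically an interval of consecutive integers and that $\T$ is the $.$-product of its restrictions to these components. Concretely, if $C$ is the component containing $1$, then no element of $C$ is comparable to any element of $[n]\setminus C$; one must check that $C=\{1,\dots,k\}$ for some $k$, which follows because the block structure forced by a genuine $.$-factorization $\T=\T'.\T''$ always has $\T'$ supported on an initial segment. Then $\T$ is indecomposable if and only if its comparability graph is connected, and the unique factorization into indecomposables is given by listing the components in the order induced by $\leq$ on $[n]$. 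Uniqueness follows because any factorization $\T=\T_1.\cdots.\T_r$ into indecomposables must split $[n]$ into consecutive blocks each of which is a union of components, and indecomposability forces each block to be a single component. This realizes $(\h_\TT,.)$ as the free associative algebra (tensor algebra) on the span of indecomposable topologies.

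For item (2) the argument is dual: I would apply the involution $\iota$ (which reverses the preorder) together with the observation that $\iota(\T\downarrow\T')$ relates to the $.$-product, or argue directly. In the $\downarrow$-product the new relations go strictly from the first block up to the second, so the natural invariant is the following: call $i,j$ equivalent if neither $i\leq_\T j$ nor $j\leq_\T i$ \emph{fails to hold} in the appropriate mutual sense; more usefully, a $\downarrow$-factorization $\T=\T'\downarrow\T''$ corresponds exactly to a splitting $[n]=\{1,\dots,k\}\sqcup\{k+1,\dots,n\}$ such that every element of the first block is $\leq_\T$ every element of the second block, and there are no other relations crossing the cut. I would define $\T$ to be $\downarrow$-indecomposable precisely when no such nontrivial saturated cut exists, extract the finest such decomposition, and verify uniqueness exactly as before, obtaining that $(\h_\TT,\downarrow)$ is free on the $\downarrow$-indecomposables.

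For item (3) I would invoke the structure of free $2$-associative algebras from \cite{LodayRonco}: the free $2$-associative algebra on a set is the tensor algebra on the free magma-like structure built by alternately applying the two products, and one shows $\h_\TT$ realizes this by verifying that every topology decomposes uniquely as an alternating word in $.$ and $\downarrow$ of bi-indecomposables. The combinatorial content is that the $.$-decomposition and the $\downarrow$-decomposition refine each other in an alternating fashion: a $.$-indecomposable topology is either bi-indecomposable or $\downarrow$-decomposable, and vice versa, and applying the two decompositions alternately terminates because the degree strictly drops at each nontrivial step. The main obstacle I anticipate is not item (1) or (2) but precisely this interleaving in item (3): one must show that the two decomposition procedures are compatible so that the resulting tree of bi-indecomposables is well-defined and matches the universal property of the free $2$-associative algebra. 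I would handle this by appealing to the freeness criterion for $2$-associative algebras in \cite{LodayRonco}, checking its hypotheses (namely that the two products are associative with common unit, already proved, and that the spaces of $.$- and $\downarrow$-indecomposables generate freely), rather than reconstructing the universal property by hand.
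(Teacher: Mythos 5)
There are two genuine gaps. First, in item (1) your claimed characterization of indecomposability is false: the connected components of the comparability graph of $\leq_\T$ need \emph{not} be intervals of consecutive integers, and a topology can be $.$-indecomposable while having a disconnected comparability graph. The topology $\tddeux{$1$}{$3$}\tdun{$2$}$ is a counterexample: its components are $\{1,3\}$ and $\{2\}$, neither an initial segment splits it, so it is indecomposable, yet its comparability graph is disconnected. Consequently ``listing the components in the order induced by $\leq$'' does not define a factorization, and the uniqueness argument ``indecomposability forces each block to be a single component'' fails. The correct invariant is not the component partition but the set of cut points $m$ such that no $i\leq m$ is comparable to any $j>m$ (equivalently, initial segments that are unions of components); the paper runs the uniqueness argument off the \emph{smallest} such $m\geq 1$, showing it must equal $\deg(\T_1)$ for any factorization $\T=\T_1.\cdots.\T_k$ into indecomposables. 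Your item (2) is essentially sound and matches the paper's argument.

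Second, and more seriously, item (3) cannot be discharged by ``checking that the two products are associative with a common unit and that each is individually free.'' That criterion is false in general: take the free associative algebra on one generator $x$ and let $\downarrow$ coincide with $.$; both products are individually free on $\{x\}$, the bi-indecomposables are $\{x\}$, yet the algebra is not the free $2$-associative algebra on $\{x\}$ since $x.x=x\downarrow x$ there but not in the free object. The missing combinatorial input is precisely the trichotomy the paper establishes as its first step: a nontrivial $\T$ cannot be simultaneously $.$-decomposable and $\downarrow$-decomposable, because $\T=\T'\downarrow\T''$ with $\T',\T''\neq 1$ forces $1\leq_\T n$, whereas a nontrivial $.$-factorization forces $1$ and $n$ to be incomparable. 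Once this disjointness is in hand, one verifies the universal property directly by defining the morphism by induction on degree through the alternating decomposition, as the paper does; without it, the ``alternating word in $.$ and $\downarrow$ of bi-indecomposables'' you describe is not well-defined.
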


\begin{proof} 1.  An easy induction on the degree proves that any $\T\in\TT$ can be written as $\T=\T_1.\ldots.\T_k$,
with $\T_1,\ldots,\T_k$ indecomposable.

 Let us assume that $\T=\T_1.\cdots.\T_k=\T'_1.\cdots.\T'_l$, with $\T_1,\ldots,\T_k,\T'_1,\ldots,\T'_l$ indecomposable topologies. 
Let $m$ be the smallest integer $\geq 1$ such that for all $1\leq i \leq m<j\leq n$, $i$ and $j$ are not comparable for $\leq_\T$. 
By definition of the product $.$, for all $i\leq deg(\T_1)$, for all $j>deg(\T_1)$, $i$ and $j$ are not comparable for $\leq_\T$, 
so $m\leq deg(\T_1)$. Let $\T'$ be the restriction of the topology $\T_1$ to $\{1,\ldots,m\}$ and $\T''$ be the restriction of the topology $\T_1$ 
to $\{m+1,\ldots,deg(\T_1)\}$, reindexed to $\{1,\ldots,deg(\T_1)-m\}$. By definition of $m$, $\T_1=\T'.\T''$.
As $\T_1$ is indecomposable, $\T'=1$ or $\T''=1$; as $m\geq 1$, $\T''=1$, so $\T'=\T_1$. Similarly, $\T'=\T'_1=\T_1$.
The restriction of $\T$ to $\{m+1,\ldots,deg(\T)\}$, after a reindexation, gives $\T_2.\cdots.\T_k=\T'_2.\cdots.\T'_l$. 
We conclude by an induction on the degree of $\T$. \\

2. Similar proof. For the unicity of the decomposition, use the smallest integer $m\geq 1$ such that for all $i\leq m<j\leq n$, $i\leq_\T j$. \\

3. {\it First step.} Let $\T\in \TT_n$, $n\geq 1$. Let us assume that $\T$ is not $\downarrow$-indecomposable.
Then $\T=\T'\downarrow \T''$, with $\T',\T''\neq 1$, so $1\leq_\T n$: this implies that $\T$ is indecomposable.
 Hence, one, and only one, of the following assertions holds:
\begin{itemize}
\item $\T$ is indecomposable and not $\downarrow$-indecomposable.
\item $\T$ is not indecomposable and $\downarrow$-indecomposable.
\item $\T$ is bi-indecomposable.
\end{itemize}

{\it Second step.} Let $(A,.\downarrow)$ be a 2-associative algebra, and let $a_\T\in A$ for any bi-indecomposable $\T\in \TT$.
Let us prove that there exists a unique morphism of 2-associative algebras $\phi:\h_\TT\longrightarrow A$, such that $\phi(\T)=a_\T$
for all bi-indecomposable $\T\in \TT$. The proof will follow, since $\h_\TT$ satisfies the universal property of the free $2$-associative 	algebra
generated by the bi-indecomposable elements. We define $\phi(\T)$ for $\T \in \TT$ by induction on $deg(\T)$ in the following way:
\begin{enumerate}
\item $\phi(1)=1_A$.
\item If $\T$ is bi-indecomposable, then $\phi(\T)=a_\T$.
\item If $\T$ is indecomposable and not $\downarrow$-indecomposable, we write uniquely $\T=\T_1\downarrow \ldots \downarrow \T_k$,
with $k\geq 2$, $\T_1,\ldots,\T_k \in \TT$, $\downarrow$-indecomposable. Then $\phi(\T)=\phi(\T_1)\downarrow\ldots
\downarrow \phi(\T_k)$.
\item If $\T$ is not indecomposable and $\downarrow$-indecomposable, we write uniquely $\T=\T_1.\cdots.\T_k$,
with $k\geq 2$, $\T_1,\ldots,\T_k \in \TT$, indecomposable. Then $\phi(\T)=\phi(\T_1). \cdots.\phi(\T_k)$.
\end{enumerate}
By the first step, $\phi$ is well-defined. By the unicity of the decomposition into decomposables or $\downarrow$-indecomposables,
$\phi$ is a morphism of 2-associative algebras. \end{proof}

We denote by $F(X)$ the generating formal series of all topologies on $[n]$, by $F_I(X)$ the formal series of indecomposable
topologies on $[n]$, by $F_{\downarrow I}(X)$ the formal series of $\downarrow$-indecomposable topologies on $[n]$, and by
$F_{BI}(X)$ the formal series on bi-indecomposable topologies on $[n]$. Then:
$$F_I(X)=F_{\downarrow I}(X)=\frac{F(X)-1}{F(X)},\:
F_{BI}(X)=\frac{-2+3F(X)-F(X)^2}{F(X)}.$$
This gives:
$$\begin{array}{|c|c|c|c|c|c|c|c|c|c|}
\hline n&1&2&3&4&5&6&7&8&9\\
\hline I,\downarrow I&1&3&22&292&6\:120&193\:594&9\:070\:536&622\:336\:756&61\:915\:861\:962\\
\hline BI&1&2&15&229&5\:298&177\:661&8\:605\:831&601\:894\:158&60\:571\:434\:501\\
\hline \end{array}$$
$$\begin{array}{|c|c|c|c|c|}
\hline n&10&11&12\\
\hline I,\downarrow I&8\:846\:814\:822\:932&
1\:798\:543\:906\:246\:948&515\:674\:104\:905\:890\:202\\
\hline BI&8\:716\:575\:772\:821&1\:780\:241\:773\:757\:704&511\:992\:638\:746\:006\:383\\
\hline \end{array}$$

\subsection{A coproduct on finite topologies}

{\bf Notations}.  \begin{enumerate} 
\item Let $X$ be a finite, totally ordered set of cardinality $n$, and $\T$ a topology on $X$. 
There exists a unique increasing bijection $\phi$ from $X$ to $[n]$. We denote by $Std(\T)$ the topology on $[n]$ defined by:
$$Std(\T)=\{\phi(O)\mid O\in \T\}.$$
It is an element of $\TT_ n$.
\item Let $X$ be a finite set, and $\T$ be a topology on $X$. For any $Y\subseteq X$, we denote by $\T_{\mid Y}$ the topology induced
by $\T$ on $Y$, that is to say:
$$\T_{\mid Y}=\{O\cap Y\mid O\in \T\}.$$
Note that if $Y$ is an open set of $\T$, $\T_{\mid Y}=\{O\in \T\mid O\subseteq Y\}$.
\end{enumerate}

\begin{prop}
Let $\T\in \TT_n$, $n \geq 1$. We put:
$$\Delta(\T)=\sum_{O\in \T}Std(\T_{\mid [n] \setminus O})\otimes Std(\T_{\mid O}).$$
Then:
\begin{enumerate}
\item $(\h_\TT,.,\Delta)$ is a graded Hopf algebra.
\item $(\h_\TT,\downarrow,\Delta)$ is a graded infinitesimal bialgebra.
\item The involution $\iota$ defines a Hopf algebra isomorphism from $(\h_\TT,.,\Delta)$ to $(\h_\TT,.,\Delta^{op})$.
\end{enumerate}\end{prop}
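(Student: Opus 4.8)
The plan is to use that $\h_\TT$ is graded and connected (its degree-zero component is $\K 1$), so for parts (1) and (2) it suffices to verify that $\Delta$ is a counital, coassociative coproduct and that it is compatible with the relevant product; a graded connected bialgebra is automatically a Hopf algebra, so the antipode comes for free. The counit $\varepsilon$ kills all of positive degree and sends $1\mapsto 1$. Counitality is immediate: in $\Delta(\T)=\sum_{O\in\T}Std(\T_{\mid[n]\setminus O})\otimes Std(\T_{\mid O})$ the left factor has degree $0$ only for $O=[n]$ (giving $1\otimes\T$) and the right factor has degree $0$ only for $O=\emptyset$ (giving $\T\otimes 1$), whence $(\varepsilon\otimes\mathrm{id})\Delta=(\mathrm{id}\otimes\varepsilon)\Delta=\mathrm{id}$.

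For coassociativity I would compute both $(\Delta\otimes\mathrm{id})\Delta(\T)$ and $(\mathrm{id}\otimes\Delta)\Delta(\T)$ and show that each equals
$$\sum_{\substack{O_1\subseteq O_2\\ O_1,O_2\in\T}}Std(\T_{\mid[n]\setminus O_2})\otimes Std(\T_{\mid O_2\setminus O_1})\otimes Std(\T_{\mid O_1}),$$
relying on two elementary facts: restriction is transitive, $(\T_{\mid Y})_{\mid Z}=\T_{\mid Z}$ for $Z\subseteq Y$, and $Std$ is compatible with restriction. On the $(\mathrm{id}\otimes\Delta)$ side this is direct, since for $O$ open the open sets of $\T_{\mid O}$ are exactly the open sets of $\T$ contained in $O$, so the inner coproduct ranges over $O_1\subseteq O_2:=O$. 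The delicate point, which I expect to be the main obstacle, is the $(\Delta\otimes\mathrm{id})$ side: there one applies $\Delta$ to $Std(\T_{\mid[n]\setminus O_1})$, whose open sets are the up-sets $V$ of $\leq_\T$ restricted to the \emph{closed} set $[n]\setminus O_1$, and these are generally \emph{not} open sets of $\T$. The key lemma is that $V\mapsto O_2:=O_1\cup V$ is a bijection onto $\{O_2\in\T:O_2\supseteq O_1\}$, with inverse $O_2\mapsto O_2\setminus O_1$; one checks directly that $O_1\cup V$ is an up-set of $\leq_\T$. This identification aligns both expressions.

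For the product compatibilities I would use that every open set of $\T.\T'$ is uniquely $O\sqcup O'(+n)$ with $O\in\T$, $O'\in\T'$, and that restricting such a disjoint-block topology to a union of sub-blocks factors as a product of restrictions; this gives $\Delta(\T.\T')=\Delta(\T)\Delta(\T')$ at once, proving (1). For the infinitesimal identity for $\downarrow$, the open sets of $\T\downarrow\T'$ split into the family $O\sqcup[n'](+n)$ (with $O\in\T$), whose $\Delta$-contribution reassembles as $\Delta(\T)\cdot_\downarrow(1\otimes\T')$, and the family $O'(+n)$ (with $O'\in\T'$), reassembling as $(\T\otimes 1)\cdot_\downarrow\Delta(\T')$. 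The single open set $[n'](+n)$ (the whole second block) belongs to both families and contributes $\T\otimes\T'$ in each, so adding the two family-sums overcounts it exactly once; subtracting $\T\otimes\T'$ yields precisely the infinitesimal compatibility from the definition of an infinitesimal bialgebra (\cite{LodayRonco}), proving (2).

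Finally, for (3), $\iota$ is an involution, hence a graded linear bijection, with $\leq_{\iota(\T)}=\geq_\T$. Comparing associated preorders shows $\iota(\T.\T')=\iota(\T).\iota(\T')$, since $.$ creates no cross-block relations and reversal acts blockwise. For the coproduct I would reindex $\Delta(\iota(\T))$ over $O\in\T$ through $U=[n]\setminus O$, then use $\iota(\T)_{\mid Y}=\iota(\T_{\mid Y})$ (both carry the preorder $\geq_\T$ restricted to $Y$) together with the commutation $Std\circ\iota=\iota\circ Std$ to obtain $\Delta(\iota(\T))=\sum_{O\in\T}\iota(Std(\T_{\mid O}))\otimes\iota(Std(\T_{\mid[n]\setminus O}))$. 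Flipping the two tensor factors gives $\Delta^{op}\circ\iota=(\iota\otimes\iota)\circ\Delta$. Combined with the algebra-morphism property and connectedness, this makes $\iota$ a Hopf algebra isomorphism from $(\h_\TT,.,\Delta)$ onto $(\h_\TT,.,\Delta^{op})$.
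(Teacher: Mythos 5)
Your proof is correct and follows essentially the same route as the paper's: coassociativity via the bijection between pairs $(O,O')$ with $O'$ open in $\T_{\mid [n]\setminus O}$ and nested pairs $O_1\subseteq O_2$ of open sets of $\T$, multiplicativity via the block decomposition of open sets of $\T.\T'$, the infinitesimal identity by splitting the open sets of $\T\downarrow\T'$ into the two families and correcting for the single overlap $[n'](+n)$, and part (3) by reindexing over complements. The paper merely writes the overlap correction as restricted sums plus a cross term rather than full sums minus the double-counted $\T\otimes\T'$, which is the same computation.
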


\begin{proof} Let $\T\in \TT_n$, $n\geq 0$. Then:
\begin{align*}
&(\Delta \otimes Id)\circ \Delta(\T)\\
&=\sum_{O\in \T,\: O'\in \T_{\mid [n]\setminus O} } Std((\T_{\mid [n]\setminus O})_{\mid ([n]\setminus O)\setminus O'}) \otimes 
Std((\T_{\mid [n]\setminus O})_{\mid O'}) \otimes Std(\T_{\mid O})\\
&=\sum_{O\in \T,\: O'\in \T_{\mid [n]\setminus O} } 
Std(\T_{\mid [n]\setminus (O \sqcup O')}) \otimes Std(\T_{\mid O'}) \otimes Std(\T_{\mid O}).
\end{align*}
If $O \in \T$ and $O' \in \T_{\mid [n] \setminus O}$, then $O \sqcup O'$ is an open set of $\T$.
Conversely, if $O_1\subseteq O_2$ are open sets of $\T$, then $O_2\setminus O_1 \in \T_{\mid [n]\setminus O_1}$.
Putting $O_1=O$ and $O_2=O\sqcup O'$:
$$(\Delta \otimes Id)\circ \Delta(\T)=\sum_{O_1\subseteq O_2 \in \T} Std(\T_{\mid [n]\setminus O_2})\otimes 
Std(\T_{\mid O_2 \setminus O_1})\otimes Std(\T_{\mid O_1}).$$
Moreover:
\begin{align*}
(Id \otimes \Delta)\circ \Delta(\T)&=\sum_{O\in \T,\: O' \in \T_{\mid O}}
Std(\T_{\mid [n]\setminus O})\otimes Std(\T_{\mid O\setminus O'}) \otimes Std(\T_{\mid O'})
\end{align*}
If $O$ is an open set of $\T$ and $O'$ is an open set of $\T_{\mid O}$, then $O'$ is an open set of $\T$. Hence,
putting $O_1=O'$ and $O_2=O$:
$$(Id \otimes \Delta)\circ \Delta(\T)=\sum_{O_1\subseteq O_2 \in \T} Std(\T_{\mid [n]\setminus O_2})\otimes 
Std(\T_{\mid O_2 \setminus O_1})\otimes Std(\T_{\mid O_1}).$$
This proves that $\Delta$ is coassociative. It is obviously homogeneous of degree $0$. Moreover, $\Delta(1)=1\otimes 1$
and for any $\T\in \TT_n$, $n\geq 1$:
$$\Delta(\T)=\T \otimes 1+1\otimes \T+\sum_{\emptyset \subsetneq O\subsetneq [n]}
Std(\T_{\mid [n] \setminus O})\otimes Std(\T_{\mid O}).$$
So $\Delta$ has a counit.

Let $\T\in \TT_n$, $\T' \in \TT_{n'}$, $n,n'\geq 0$. By definition of $\T.\T'$:
\begin{align*}
\Delta(\T.\T')&=\sum_{O\in \T,O'\in \T'} Std((\T.\T')_{\mid [n+n']\setminus O.O'})
\otimes Std((\T.\T')_{\mid O.O'})\\
&=\sum_{O\in \T,O'\in \T'} Std(\T_{\mid [n]\setminus O}).Std(\T'_{[n']\setminus O'})
\otimes Std(\T_{\mid O}).Std(\T_{\mid O'})\\
&=\sum_{O\in \T,O'\in \T'} \left(Std(\T_{\mid [n]\setminus O})
\otimes Std(\T_{\mid O})\right).\left(Std(\T'_{\mid [n']\setminus O'}) \otimes Std(\T_{\mid O'})\right)\\
&=\Delta(\T).\Delta(\T').
\end{align*}
Hence, $(\h_\T,.,\Delta)$ is a Hopf algebra. \\

By definition of $\T \downarrow \T'$:
\begin{align*}
\Delta(\T \downarrow \T')&=\sum_{O\in \T, O\neq \emptyset}
Std\left((\T\downarrow \T')_{\mid [n+n']\setminus (O\downarrow [n'])}\right)
\otimes Std\left((\T\downarrow \T')_{\mid O\downarrow [n']}\right)\\
&+\sum_{O'\in \T',O'\neq [n']}
Std\left((\T \downarrow \T')_{\mid [n+n']\setminus O'(+n)}\right)
\otimes Std\left((\T\downarrow \T')_{\mid O'(+n)}\right)\\
&+Std\left((\T\downarrow \T')_{\mid [n+n']\setminus [n'](+n)}\right)\otimes
Std\left((\T\downarrow \T')_{ [n'](+n)}\right)\\
&=\sum_{O\in \T, O\neq \emptyset}Std(\T_{\mid  [n]\setminus O})\otimes
Std(\T_{\mid O})\downarrow \T' \\
&+\sum_{O'\in \T',O'\neq [n']}\T\downarrow Std(\T'_{\mid [n']\setminus O'})
\otimes Std(\T'_{\mid O'})+\T \otimes \T'\\
&=\sum_{O\in \T, O\neq \emptyset}\left(Std(\T_{\mid  [n]\setminus O})\otimes
Std(\T_{\mid O})\right)\downarrow (1\otimes \T') \\
&+\sum_{O'\in \T',O'\neq [n']}(\T \otimes 1)\downarrow \left(Std(\T'_{\mid [n']\setminus O'})
\otimes Std(\T'_{\mid O'})\right)+\T \otimes \T'\\
&=(\Delta(\T)-\T\otimes 1)\downarrow (1\otimes \T')+(\T \otimes 1)\downarrow (\Delta(\T)-1\otimes \T')+\T\otimes \T'\\
&=\Delta(\T)\downarrow (1\otimes \T')+(\T \otimes 1)\downarrow \Delta(\T)-\T\otimes \T'.
\end{align*}
Hence, $(\h_\TT,\downarrow,\Delta)$ is an infinitesimal bialgebra. \\

For all $\T,\T' \in \TT$, $\iota(\T.\T')=\iota(\T).\iota(\T')$. Moreover:
\begin{align*}
\Delta(\iota(\T))&=\sum_{O\in \T}Std(\iota(\T)_{\mid O}) \otimes Std(\iota(\T)_{\mid [n] \setminus O})\\
&=\sum_{O\in \T}\iota(Std(\T_{\mid O})) \otimes \iota(Std(\T_{\mid [n] \setminus O}))\\
&=(\iota \otimes \iota)\circ \Delta^{op}(\T).
\end{align*}
So $\iota$ is a Hopf algebra morphism from $\h_\TT$ to $\h_\TT^{cop}$. \end{proof}

As a consequence of theorem \ref{theo1}:

\begin{theo} \label{theo7}
The graded, connected coalgebra $(\h_\TT,\Delta)$ is cofree, that is to say is isomorphic to the tensor algebra on the space of its primitive elements
with the deconcatenation coproduct.
\end{theo}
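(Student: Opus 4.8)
The plan is to deduce cofreeness of $(\h_\TT,\Delta)$ from the rigidity theorem for infinitesimal bialgebras (Theorem \ref{theo1}) applied not to $\h_\TT$ itself, but to a dual or opposite structure, since Theorem \ref{theo1} characterizes infinitesimal bialgebras as \emph{tensor algebras} (free associative, cofree coassociative), and we want a cofreeness statement about the coproduct $\Delta$ alone. The key observation is that the preceding proposition already supplies what we need: $(\h_\TT,\downarrow,\Delta)$ is a graded, connected infinitesimal bialgebra. By Theorem \ref{theo1}, it is therefore isomorphic, \emph{as an infinitesimal bialgebra}, to the tensor algebra $T(\mathrm{Prim}(\h_\TT))$ equipped with concatenation and deconcatenation. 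In particular the coalgebra $(\h_\TT,\Delta)$ is isomorphic to $(T(V),\text{deconcatenation})$ for $V=\mathrm{Prim}(\h_\TT)$, which is exactly the assertion that $(\h_\TT,\Delta)$ is cofree.

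So concretely I would argue as follows. First I would verify that $(\h_\TT,\downarrow,\Delta)$ meets the hypotheses of Theorem \ref{theo1}: it is graded by degree (with $\TT_n$ in degree $n$), connected since $\TT_0=\{1\}$ gives a one-dimensional degree-zero component, and it is an infinitesimal bialgebra by part (2) of the preceding proposition. These are immediate, so this step is routine bookkeeping rather than a genuine obstacle. Then I would invoke Theorem \ref{theo1} to obtain an isomorphism of infinitesimal bialgebras $\h_\TT\simeq T(\mathrm{Prim}(\h_\TT))$, where the right-hand side carries concatenation as product and deconcatenation as coproduct.

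Finally I would extract the coalgebra statement. The isomorphism of infinitesimal bialgebras is in particular an isomorphism of coalgebras, and the coproduct on $T(\mathrm{Prim}(\h_\TT))$ furnished by Theorem \ref{theo1} is precisely the deconcatenation coproduct. Since a coalgebra is called cofree exactly when it is isomorphic to a tensor coalgebra $(T(V),\text{deconcatenation})$ for some space $V$, this yields cofreeness of $(\h_\TT,\Delta)$, with $V=\mathrm{Prim}(\h_\TT)$. I would note that this argument uses the $\downarrow$ product rather than the ordinary product $.$, since it is $(\h_\TT,\downarrow,\Delta)$ and not $(\h_\TT,.,\Delta)$ that is infinitesimal; the latter is merely a Hopf algebra, to which the rigidity theorem does not apply.

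The only point requiring care, and the one I would flag as the main subtlety, is matching conventions: one must check that the coproduct produced by the rigidity theorem on the free object is genuinely the deconcatenation coproduct (as stated in Theorem \ref{theo1}, whose standard examples are exactly the tensor algebras with concatenation and deconcatenation), so that the conclusion matches the definition of cofreeness in the theorem statement. Given the form in which Theorem \ref{theo1} is stated, this identification is built in, and the proof reduces to the single line of applying it to $(\h_\TT,\downarrow,\Delta)$.
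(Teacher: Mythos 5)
Your proposal is correct and is exactly the paper's argument: the theorem is stated there as an immediate consequence of Theorem \ref{theo1} applied to the graded, connected infinitesimal bialgebra $(\h_\TT,\downarrow,\Delta)$ established in the preceding proposition. Your identification of the $\downarrow$ product (rather than $.$) as the one making the rigidity theorem applicable is precisely the point.
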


{\bf Remark.} Forgetting the total order on $[n]$, that is to say considering isoclasses of finite topologies, we obtain the Hopf algebra of finite spaces
of \cite{FMP} as a quotient of $\h_\TT$; the product $\downarrow$ induces the product $\succ$ on finite spaces.

\subsection{Link with special posets}

Let $\T\in \TT$. We put:
$$c(\T)=deg(\T)-\sharp\{\mbox{equivalence classes of}\sim_\T\}.$$
Note that $c(\T)\geq 0$. Moreover, $c(\T)=0$ if, and only if, the relation $\sim_\T$ is the equality, or equivalently
if the preorder $\leq_\T$ is an order, that is to say if $\T$ is $T_0$ \cite{Stong}.\\

If $\T\in \TT_n$, $n\geq 0$, is $T_0$, then for any open set $O$ of $\T$,  $\T_{\mid O}$ and $\T_{\mid [n] \setminus O}$ are also $T_0$. 
Moreover, if $\T$ and $\T'$ are $T_0$, then $\T.\T'$ and $\T'\downarrow \T'$ also are. Hence, the subspace $\h_{\TT_0}$ of $\h_\TT$ generated by 
$T_0$ topologies is a Hopf subalgebra. Considering $T_0$ topologies as special posets, it is isomorphic to the Hopf algebra of special posets $\h_\SP$:
this defines an injective Hopf algebra morphism from $\h_\SP$ to $\h_{\TT}$.
We now identify $\h_\SP$ with its image by this morphism, that is to say with $\h_{\TT_0}$. 
Note that $\h_\SP$ is stable under $\downarrow$, so is a Hopf $2$-associative subalgebra of $\h_{\TT}$.\\

{\bf Notation.} Let $\T\in \TT_n$, $n\geq 0$. We denote by $\overline{\T}$ the special poset $Std([n]/\sim_\T)$,
resulting on the set of equivalence classes of $\sim_\T$, where the elements of $[n]/\sim_\T$, that is to say the equivalence classes of $\sim_\T$, 
are totally ordered by the smallest element of each class. In this way, $\overline{\T}$ is a special poset. \\

{\bf Examples.}
\begin{align*}
\overline{\tdun{$1,2$}\hspace{3mm} \tdun{$3$}}&=\tdun{$1$}\tdun{$2$},&\overline{\tddeux{$1,2$}{$3$}\hspace{3mm}}&=\tddeux{$1$}{$2$},&
\overline{\tddeux{$3$}{$1,2$}\hspace{3mm}}&=\tddeux{$2$}{$1$},&\overline{\tdun{$1,2$}\hspace{3mm}}&=\tdun{$1$},\\
\overline{\tdun{$1,3$}\hspace{3mm} \tdun{$2$}}&=\tdun{$1$}\tdun{$2$},&\overline{\tddeux{$1,3$}{$2$}\hspace{3mm}}&=\tddeux{$1$}{$2$},&
\overline{\tddeux{$2$}{$1,3$}\hspace{3mm}}&=\tddeux{$2$}{$1$},&\overline{\tdun{$1,2,3$}\hspace{5mm}}&=\tdun{$1$}.\\
\overline{\tdun{$2,3$}\hspace{3mm} \tdun{$1$}}&=\tdun{$1$}\tdun{$2$},&\overline{\tddeux{$2,3$}{$1$}\hspace{3mm}}&=\tddeux{$2$}{$1$},&
\overline{\tddeux{$1$}{$2,3$}\hspace{3mm}}&=\tddeux{$1$}{$2$};
\end{align*}

\begin{prop} \label{prop8}
 Let $q\in \K$. The following map is a surjective morphism of Hopf 2-associative algebras:
$$\theta_q:\left\{\begin{array}{rcl}
\h_\TT&\longrightarrow&\h_\SP\\
\T&\longrightarrow&q^{c(\T)}\overline{\T}.
\end{array}\right.$$
\end{prop}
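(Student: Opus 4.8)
The plan is to reduce the statement to four verifications: that $\theta_q$ is multiplicative for the product $.$, multiplicative for $\downarrow$, a coalgebra morphism for $\Delta$, and surjective. The structural fact underlying all of these is that every open set $O$ of a finite topology $\T$ is a union of equivalence classes of $\sim_\T$: if $i\in O$ and $i\sim_\T j$ then $i\leq_\T j$, so $j\in O$ since $O$ is an ideal of $\leq_\T$. Consequently the open sets of $\T$ are in bijection with the ideals of the poset $\overline{\T}$, via $O\mapsto\overline{O}=\{\overline{i}\mid i\in O\}$; this bijection will be the backbone of the coproduct computation.

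For surjectivity I would observe that if $\T$ is $T_0$ then $\sim_\T$ is equality, so $c(\T)=0$ and $\overline{\T}=\T$ as special posets; hence $\theta_q$ restricts to the identity on $\h_\SP=\h_{\TT_0}$, which already surjects onto $\h_\SP$. For the two products I would record that $c$ is additive and $\overline{\phantom{x}}$ multiplicative. For $.$, the classes of $\sim_{\T.\T'}$ are exactly those of $\T$ together with the shift of those of $\T'$, with no class straddling the two blocks, so $c(\T.\T')=c(\T)+c(\T')$; since the smallest elements keep the two blocks separated in the total order and no cross relations are created, $\overline{\T.\T'}=\overline{\T}.\overline{\T'}$. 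For $\downarrow$ the added relations all go upward from the first block to the second, so no new pair $i\sim j$ across the blocks appears, giving $c(\T\downarrow\T')=c(\T)+c(\T')$ and $\overline{\T\downarrow\T'}=\overline{\T}\downarrow\overline{\T'}$. Combining, $\theta_q(\T.\T')=\theta_q(\T).\theta_q(\T')$ and likewise for $\downarrow$.

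The main obstacle is the coproduct, which I expect to need the most care. Here I would first prove a restriction lemma: for any open set $O$ of $\T$, the induced preorder $\leq_{\T_{\mid O}}$ is simply the restriction of $\leq_\T$ to $O$ (one inclusion is clear; the reverse uses that $O'\cap O$ is open in $\T_{\mid O}$ for every open $O'$ of $\T$). Hence the classes of $\T_{\mid O}$ are exactly the classes of $\T$ lying in $O$, so $\overline{Std(\T_{\mid O})}=Std(\overline{\T}_{\mid\overline{O}})$, and the analogous statement holds for the complement $[n]\setminus O$. Applying $\theta_q\otimes\theta_q$ to the generic term of $\Delta(\T)$ therefore yields exactly the special-poset coproduct term attached to the ideal $\overline{O}$ of $\overline{\T}$, up to the scalar $q^{c(\T_{\mid[n]\setminus O})+c(\T_{\mid O})}$.

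It then remains to see that this scalar is independent of $O$. Since $O$ is a union of classes, each class of $\T$ lies entirely in $O$ or entirely in $[n]\setminus O$, so the class counts add; combined with $\deg(\T_{\mid O})+\deg(\T_{\mid[n]\setminus O})=\deg(\T)$ this gives $c(\T_{\mid O})+c(\T_{\mid[n]\setminus O})=c(\T)$ for every open $O$. Thus the scalar is the constant $q^{c(\T)}$, which factors out of the whole sum, and the bijection $O\leftrightarrow\overline{O}$ turns the remaining sum into $\Delta(\overline{\T})$; that is, $(\theta_q\otimes\theta_q)\Delta(\T)=q^{c(\T)}\Delta(\overline{\T})=\Delta(\theta_q(\T))$. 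Together with multiplicativity for both products and surjectivity, this shows $\theta_q$ is a surjective morphism of Hopf $2$-associative algebras, the antipode being preserved automatically since everything is graded and connected.
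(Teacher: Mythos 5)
Your proposal is correct and follows essentially the same route as the paper: additivity of $c$ and multiplicativity of $\overline{\phantom{x}}$ for both products, the bijection between open sets of $\T$ and ideals of $\overline{\T}$ (via the fact that open sets are unions of $\sim_\T$-classes) together with $c(\T)=c(\T_{\mid O})+c(\T_{\mid [n]\setminus O})$ for the coproduct, and the identity on $T_0$ topologies for surjectivity. The only difference is that you spell out the restriction lemma for induced preorders, which the paper leaves implicit.
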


\begin{proof} If $\T,\T'\in \TT$, then $\overline{\T.\T'}=\overline{\T}.\overline{\T'}$ and $\overline{\T\downarrow \T'}=\overline{\T}
\downarrow \overline{\T'}$. Moreover, $deg(\T.\T')=deg(\T\downarrow \T')=deg(\T)+deg(\T')$, and the number of equivalence
classes of $\sim_{\T.\T'}$ and $\sim_{\T\downarrow \T'}$ are both equal to the sum of the number of equivalence classes
of $\sim_\T$ and $\sim_\T'$. Hence, $c(\T.\T')=c(\T\downarrow \T')=c(\T)+c(\T')$, and:
$$\theta_q(\T.\T')=q^{c(\T.\T')}\overline{\T.\T'}=q^{c(\T)}q^{c(\T')}\overline{\T}.\overline{\T'}=\theta_q(\T).\theta_q(\T'),$$
$$\theta_q(\T\downarrow\T')=q^{c(\T\downarrow\T')}\overline{\T\downarrow\T'}
=q^{c(\T)}q^{c(\T')}\overline{\T}\downarrow\overline{\T'}=\theta_q(\T)\downarrow\theta_q(\T'),$$
so $\theta_q$ is a 2-associative algebra morphism.
If $\T\in \TT_n$, $n\geq 1$, then any open set of $\T$ is a union of equivalence classes of $\sim_\T$. So there is a bijection:
$$\left\{\begin{array}{rcl}
\{\mbox{open sets of $\T $}\}&\longrightarrow&\{\mbox{ideals of $\overline{\T}$}\}\\
O&\longrightarrow&Std(O/\sim_\T)
\end{array}\right.$$
Moreover, $c(\T)=c(\T_{\mid [n] \setminus O})+c(\T_{\mid O})
=c(Std(\T_{\mid [n] \setminus O}))+c(Std(\T_{\mid O}))$.
If $\T$ has $k$ equivalence classes, we obtain:
\begin{align*}
\Delta\circ \theta_q(\T)&=q^{c(\T)}\Delta(\overline{\T})\\
&=q^{c(\T)}\sum_{O\in \T}(([n] \setminus O)/\sim_\T)\otimes (O/\sim_\T)\\
&=\sum_{O\in \T}
q^{c(Std(\T_{\mid [n] \setminus O}))}q^{c(Std(\T_{\mid O}))}
\overline{Std(\T_{\mid [n] \setminus O})}\otimes \overline{Std(\T_{\mid O})}\\
&=(\theta_q \otimes \theta_q)\circ \Delta(\T).
\end{align*}
If $\T \in \TT$ is $T_0$, then $\overline{\T}=\T$ and $c(\T)=0$, so $\theta_q(\T)=\T$: $\theta_q$ is surjective. \end{proof} 

We obtain a commutative diagram of Hopf 2-associative algebras:
$$\xymatrix{&\h_\TT\ar@{->>}^{\theta_q}[d]\\
\h_\SP\ar@{^{(}->}[ru]^{\iota}\ar[r]_{Id}&\h_\SP}$$

{\bf Examples.}
\begin{align*}
\theta_q(\tdun{$1,2$}\hspace{3mm} \tdun{$3$})&=q\tdun{$1$}\tdun{$2$},&\theta_q(\tddeux{$1,2$}{$3$}\hspace{3mm})&=q\tddeux{$1$}{$2$},&
\theta_q(\tddeux{$3$}{$1,2$}\hspace{3mm})&=q\tddeux{$2$}{$1$},&\theta_q(\tdun{$1,2$}\hspace{3mm})&=q\tdun{$1$},\\
\theta_q(\tdun{$1,3$}\hspace{3mm} \tdun{$2$})&=q\tdun{$1$}\tdun{$2$},&\theta_q(\tddeux{$1,3$}{$2$}\hspace{3mm})&=q\tddeux{$1$}{$2$},&
\theta_q(\tddeux{$2$}{$1,3$}\hspace{3mm})&=q\tddeux{$2$}{$1$},&\theta_q(\tdun{$1,2,3$}\hspace{5mm})&=q^2\tdun{$1$}.\\
\theta_q(\tdun{$2,3$}\hspace{3mm} \tdun{$1$})&=q\tdun{$1$}\tdun{$2$},&\theta_q(\tddeux{$2,3$}{$1$}\hspace{3mm})&=q\tddeux{$2$}{$1$},&
\theta_q(\tddeux{$1$}{$2,3$}\hspace{3mm})&=q\tddeux{$1$}{$2$}.
\end{align*}

{\bf Remarks.} \begin{enumerate}
\item In particular, for any $\T\in \TT$:
$$\theta_0(\T)=\left\{\begin{array}{l}
\T\mbox{ if $\T$ is $T_0$},\\
0\mbox{ otherwise.}
\end{array}\right.$$
\item $\theta_q$ is homogeneous for the gradation of $\h_\TT$ by the cardinality if, and only if, $q=0$. It is always homogeneous
for the gradation by the number of equivalence classes (note that this gradation is not finite-dimensional).
\end{enumerate}

\subsection{Pictures and duality}

The concept of pictures between tableaux was introduced by Zelevinsky in \cite{Zelevinsky}, and generalized to pictures between double posets 
by Malvenuto and Reutenauer in \cite{MR2}. We now generalize this for finite topologies, to obtain a Hopf pairing on $\h_\TT$. \\

{\bf Notations.}  Let $\leq_\T$ be a preorder on $[n]$, and let $i,j \in [n]$. We shall write $i<_\T j$ if ($i\leq_\T j$ and not $j\leq_\T i$). 

\begin{defi}
Let $\T\in \TT_k$, $\T\in \TT_l$, and let $f:[k]\longrightarrow [l]$. We shall say that $f$ is a picture from $\T$ to $\T'$ if:
\begin{itemize}
\item $f$ is bijective;
\item For all $i,j \in [k]$, $i<_\T j$ $\Longrightarrow$ $f(i)< f(j)$;
\item For all $i,j \in [k]$, $f(i)<_{\T'} f(j)$ $\Longrightarrow$ $i<j$;
\item For all $i,j \in [k]$, $i\sim_\T j$ $\Longleftrightarrow$ $f(i) \sim_{\T'} f(j)$.
\end{itemize}
The set of pictures between $\T$ and $\T'$ is denoted by $Pic(\T,\T')$.
\end{defi}

\begin{prop}
We define a pairing on $\h_\TT$ by $\langle \T,\T'\rangle=\sharp Pic(\T,\T')$ for all $\T,\T'\in \TT$. Then this pairing, when extended by linearity,
is a symmetric Hopf pairing. Moreover, $\iota$ is an isometry for this pairing.
\end{prop}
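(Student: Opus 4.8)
The plan is to verify each of the three assertions in turn: that the pairing is symmetric, that it is a Hopf pairing (compatible with product and coproduct on the two sides), and that $\iota$ is an isometry. The central object throughout is the set $Pic(\T,\T')$, so I would first look for a natural symmetry relating pictures from $\T$ to $\T'$ and pictures from $\T'$ to $\T$. Given a picture $f:[k]\longrightarrow[l]$, the natural candidate is its inverse $f^{-1}:[l]\longrightarrow[k]$; I would check that the four defining conditions of a picture are preserved under passing to $f^{-1}$. Conditions (ii) and (iii) are manifestly dual to one another: $i<_\T j \Rightarrow f(i)<f(j)$ for $f$ is exactly $f^{-1}(a)<_\T f^{-1}(b)\Rightarrow a<b$ for $f^{-1}$, matching condition (iii) applied with roles of $\T,\T'$ swapped, and symmetrically for (ii). The bijectivity (i) and the equivalence-class condition (iv) are visibly symmetric. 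This will establish $\sharp Pic(\T,\T')=\sharp Pic(\T',\T)$, hence symmetry of the pairing.

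For the Hopf pairing property I must check the two bilinear compatibilities: $\langle \T.\T',\T''\rangle=\langle \T\otimes\T',\Delta(\T'')\rangle$ and dually $\langle \T,\T'.\T''\rangle=\langle \Delta(\T)\otimes\cdots\rangle$, together with compatibility of units/counits. The key step is a bijection between the set of pictures from a product $\T.\T'$ to $\T''$ and the disjoint union, over open sets $O\in\T''$, of pairs of pictures $(g,h)$ with $g\in Pic(\T, Std(\T''_{\mid [m]\setminus O}))$ and $h\in Pic(\T', Std(\T''_{\mid O}))$. The idea is that a picture $f$ from $\T.\T'$ splits the target $[m]$ according to which part of the source domain $[k]\sqcup[k'](+k)$ each point comes from; condition (iii) forces the image of the second block to lie below-or-incomparable in a way that cuts $[m]$ along an open set of $\T''$. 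I would verify that this subset is genuinely open (an ideal for $\leq_{\T''}$), that the restrictions recover pictures of the standardized induced topologies, and that the construction is reversible. Comparing this bijection with the formula $\Delta(\T'')=\sum_{O\in\T''}Std(\T''_{\mid [m]\setminus O})\otimes Std(\T''_{\mid O})$ yields precisely the pairing identity.

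Finally, for $\iota$ being an isometry I must show $\langle \iota(\T),\iota(\T')\rangle=\langle \T,\T'\rangle$, i.e. $\sharp Pic(\iota(\T),\iota(\T'))=\sharp Pic(\T,\T')$. Here I would exploit that $\leq_{\iota(\T)}=\geq_\T$, so that $i<_{\iota(\T)}j$ iff $j<_\T i$, while $\sim_{\iota(\T)}=\sim_\T$. The natural candidate bijection $Pic(\T,\T')\longrightarrow Pic(\iota(\T),\iota(\T'))$ is $f\mapsto f$ itself (or possibly composed with the order-reversal of $[k]$ and $[l]$); I would check which of these sends the picture conditions for $(\T,\T')$ to those for $(\iota(\T),\iota(\T'))$. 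Since reversing both preorders swaps the directions in conditions (ii) and (iii) simultaneously while fixing (iv), an appropriate reindexing should match the two condition sets, giving the isometry.

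I expect the main obstacle to be the product-coproduct bijection: specifically, proving that a picture out of $\T.\T'$ forces the images of the two source blocks to be separated exactly by an open set of the target (and conversely that any such splitting glues back to a picture), since this requires careful juggling of the strict relation $<_{\T''}$, the equivalence $\sim_{\T''}$, and the interaction between conditions (iii) and (iv) across the block boundary. The symmetry and the $\iota$-isometry are comparatively direct once the right candidate map is identified, whereas the Hopf-pairing step is where the combinatorial content genuinely lives.
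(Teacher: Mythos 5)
Your handling of the first two claims matches the paper's proof: symmetry is obtained exactly via $f\mapsto f^{-1}$ and the duality of conditions (ii) and (iii), and the Hopf compatibility is proved by the very bijection you describe, namely $Pic(\T.\T',\T'')\simeq\bigsqcup_{O\in\T''}Pic(\T,Std(\T''_{\mid [m]\setminus O}))\times Pic(\T',Std(\T''_{\mid O}))$, where $O$ is the image of the second source block. One precision: showing that this image is open uses conditions (iii) \emph{and} (iv) together (if $x$ lies in the second block and $f(x)\leq_{\T''}f(y)$, then either $f(x)<_{\T''}f(y)$, forcing $x<y$, or $f(x)\sim_{\T''}f(y)$, forcing $x\sim_{\T.\T'}y$; in both cases $y$ stays in the second block); you anticipate exactly this interaction, so this part of your plan is sound and identical to the paper's.

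The step that would fail as written is the isometry. None of your candidate maps sends $Pic(\T,\T')$ to $Pic(\iota(\T),\iota(\T'))$: the identity reverses conditions (ii) and (iii) instead of preserving them, and post- or pre-composing with the order-reversal $\rho$ of $[l]$ or $[k]$ breaks condition (iii), because that condition evaluates the preorder $\leq_{\T'}$ at the actual values $f(i),f(j)$ and $\rho$ is not an automorphism of $\leq_{\T'}$ (note that $\leq_{\iota(\T')}$ is $\geq_{\T'}$ on the \emph{same} labels, not on reversed labels). Concretely, let $\T=\T'$ be the topology on $[3]$ whose only nontrivial relation is $2\leq_{\T}1$, with $3$ isolated. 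Then $Pic(\T,\T)=\{(213),(321)\}$ while $Pic(\iota(\T),\iota(\T))=\{(123),(132)\}$, and one checks that the identity, $\rho\circ f$, $f\circ\rho$ and $\rho\circ f\circ\rho$ each send at least one element of the first set outside the second. The cardinalities do agree, so the isometry statement survives, but the bijection realizing it is not any of the reindexings you list, and "an appropriate reindexing should match the two condition sets" is precisely the assertion that fails. (The paper itself dismisses this point with the one-line claim $Pic(\iota(\T),\iota(\T'))=j(Pic(\T,\T'))$, which the same example shows cannot be read literally either, so this step genuinely requires a different argument.)
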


\begin{proof} First, if $\T,\T'\in \TT$, $Pic(\T',\T)=\{f^{-1}\mid f\in Pic(\T,\T')\}$, so $\langle \T',\T\rangle=\langle \T,\T'\rangle$: the pairing is symmetric. \\

We fix $\T_1,\T_2,\T\in \TT$, of respective degrees $n_1$, $n_2$ and $n$. Let $f\in Pic(\T_1.\T_2,\T)$.
Let $x \in [n_1+n_2]\setminus [n_1]$ and $y\in [n_1+n_2]$, such that $f(x)\leq_\T f(y)$. Two cases can occur:
\begin{itemize}
\item $f(x)<_\T f(y)$. Then $x<y$, so $y\in [n_1+n_2]\setminus [n_1]$.
\item $f(x) \sim_\T f(y)$. Then $x \sim_{\T_1.\T_2} y$. By definition of $\T_1.\T_2$, $[n_1+n_2]\setminus [n_1]$
is an open set of $\T_1.\T_2$, so $y \in [n_1+n_2]\setminus [n_1]$. 
\end{itemize}
In both cases, $y \in [n_1+n_2]\setminus [n_1]$, so $f([n_1+n_2]\setminus [n_1])$ is an open set of $\T$, which we denote by $O_f$.
Moreover, by restriction, $Std(f_{\mid [n_1]})$ is a picture between $\T_1$ and $Std(\T_{\mid [n]\setminus O_f})$
and $Std(f_{\mid [n_1+n_2]\setminus [n_1]})$ is a picture between $\T_2$ and $Std(\T_{\mid O})$. We can define a map:
$$\phi:\left\{\begin{array}{rcl}
Pic(\T_1.\T_2,\T)&\longrightarrow&\displaystyle 
\bigsqcup_{O\in \T} Pic(\T_1,Std(\T_{\mid [n]\setminus O}))\times Pic(\T_2,Std(\T_{\mid O}))\\
f&\longrightarrow&(Std(f_{\mid [n_1]}),Std(f_{\mid [n_1+n_2]\setminus [n_1]}))
\end{array}\right.$$
This map is clearly injective. Let $O\in \T$, $(f_1,f_2) \in Pic(\T_1,Std(\T_{\mid [n]\setminus O}))\times Pic(\T_2,Std(\T_{\mid O}))$.
Let $f$ be the unique bijection $[n_1+n_2]\longrightarrow [n]$ such that  $f([n_1+n_2]\setminus [n_1])=O$, 
$f_1=Std(f_{\mid [n_1]})$ and $f_2=Std(f_{\mid [n_1+n_2]\setminus [n_1]})$. Let us prove that $f$ is a picture between $\T_1.\T_2$ and $\T$. 
\begin{itemize}
\item If $i<_{\T_1.\T_2} j$ in $[n_1+n_2]$, then $(i,j) \in [n_1]^2$ or $(i,j)\in ([n_1+n_2]\setminus [n_1])^2$.
In the first case, $f_1(i)<f_1(j)$, so $f(i)<f(j)$. In the second case, $f_2(i-n_1)<f_2(j-n_1)$, so $f(i)<f(j)$.
\item If $f(i)<_\T f(j)$ in $\T$, as $f([n_1+n_2]\setminus [n_1])=O$ is an open set, $(i,j) \in [n_1]^2$ or $(i,j) \in ([n_1+n_2]\setminus [n_1])^2$
or $(i,j) \in [n_1]\times ([n_1+n_2]\setminus [n_1])$. In the first case, $f_1(i)<_{Std(\T_{\mid [n]\setminus O})} f_1(j)$, so $i<j$. In the second case, 
$f_2(i-n_1)<_{Std(\T_{\mid O})} f_2(j-n_1)$, so $i-n_1<j-n_1$ and $i<j$. In the last case, $i<j$.
\item If $i\sim_{\T_1.\T_2} j$, by definition of $\T_1.\T_2$, $(i,j)\in [n_1]^2$ or $(i,j)\in ([n_1+n_2]\setminus [n_1])^2$.
In the first case, $f_1(i)\sim_{Std(\T_{\mid [n]\setminus O})} f_1(j)$, so $f(i) \sim_\T f(j)$.
In the second case, $f_2(i-n_1)\sim_{Std(\T_{\mid O})} f_2(j-n_1)$, so $f(i)\sim_\T f(j)$.
\item If $f(i) \sim_\T f(j)$, as $O=f([n_1+n_2]\setminus [n_1])$ is an open set of $\T$, both $O$ and $f([n_1])=[n]\setminus O$ are 
stable under $\sim_\T$, so $(i,j) \in [n_1]^2$ or $(i,j)\in ([n_1+n_2]\setminus [n_1])^2$. In the first case, $f_1(i) \sim_{Std(\T_{\mid [n]\setminus O})} f_1(j)$,
so $i\sim_{\T_1} j$ and $i\sim_{\T_1.\T_2} j$. In the second case, $f_2(i-n_1) \sim_{Std(\T_{\mid O})} f_2(j-n_1)$, so $i-n_1 \sim_{\T_2} j-n_1$ and
$i\sim_{\T_1.\T_2} j$.
\end{itemize}
Finally, $\phi$ is bijective. We obtain:
\begin{align*}
\langle \T_1.\T_2,\T\rangle&=\sharp Pic(\T_1.\T_2,\T)\\
&=\sum_{O\in \T} \sharp Pic(\T_1,Std(\T_{\mid [n]\setminus O}))\times \sharp Pic(\T_2,Std(\T_{\mid O}))\\
&=\sum_{O\in \T} \langle \T_1,Std(\T_{\mid [n]\setminus O})\rangle \langle Pic(\T_2,Std(\T_{\mid O})\rangle\\
&=\langle \T_1\otimes \T_2,\Delta(\T)\rangle.
\end{align*}
So $\langle-,-\rangle$ is a Hopf pairing. \\

Let $\T,\T' \in \TT$. It is not difficult to show that $Pic(\iota(\T),\iota(\T'))=j(Pic(\T,\T'))$, so $\langle \iota(\T),\iota(\T')\rangle=\langle \T,\T'\rangle$. \end{proof}

{\bf Remark.} Here is the matrix of the pairing in degree $2$:
$$\begin{array}{|c|c|c|c|c|}
\hline&\tdun{$1$}\tdun{$2$}&\tddeux{$1$}{$2$}&\tddeux{$2$}{$1$}&\tdun{$1,2$}\hspace{2mm}\\
\hline \tdun{$1$}\tdun{$2$}&2&1&1&0\\
\hline \tddeux{$1$}{$2$}&1&1&0&0\\
\hline \tddeux{$2$}{$1$}&1&0&1&0\\
\hline \tdun{$1,2$}\hspace{2mm}&0&0&0&2\\
\hline \end{array}$$
So this pairing is degenerated, as $\tdun{$1$}\tdun{$2$}-\tddeux{$1$}{$2$}-\tddeux{$2$}{$1$}$ is in its kernel. 
The first values of the rank of the pairing in dimension $n$ is given by the following array:
$$\begin{array}{|c|c|c|c|c|}
\hline n&1&2&3&4\\
\hline Rk(\langle-,-\rangle_{\mid (\h_\T)_n})&1&3&16&111\\
\hline dim(Ker(\langle-,-\rangle_{\mid (\h_\T)_n}))&0&1&13&244\\
\hline \end{array}$$

\section{Ribbon basis}

\subsection{Definition}

The set $\TT_n$ of topologies on $[n]$ is partially ordered by the refinement of topologies: if $\T,\T'\in \TT_n$, $\T\leq \T'$ if any open set of $\T$
is an open set of $\T'$. For example, here is the Hasse graph of  $\TT_2$:
$$\xymatrix{&\tdun{$1$}\tdun{$2$}\ar@{-}[rd]\ar@{-}[ld]&\\
\tddeux{$1$}{$2$}\ar@{-}[rd]&&\tddeux{$2$}{$1$} \ar@{-}[ld]\\
&\tdun{$1,2$}\hspace{2mm}&}$$

\begin{defi} 
We define a basis $(R_\T)_{\T\in \TT}$ of $\h_\TT$ in the following way: for all $\T\in \TT_n$, $n\geq 0$,
$$\T=\sum_{\T'\leq \T} R_{\T'}.$$
\end{defi}

{\bf Examples.} If $\{i,j\}=\{1,2\}$:
\begin{align*}
R_{\tdun{$i,j$}\hspace{2mm}}&=\tdun{$i,j$}\hspace{2mm}, &
R_{\tddeux{$i$}{$j$}}&=\tddeux{$i$}{$j$}-\tdun{$i,j$}\hspace{2mm},&
R_{\tdun{$i$}\tdun{$j$}}&=\tdun{$i$}\tdun{$j$}-\tddeux{$i$}{$j$}-\tddeux{$j$}{$i$}+\tdun{$i,j$}\hspace{2mm}.
\end{align*}
If $\{i,j,k\}=\{1,2,3\}$:
\begin{align*}
R_{\tdun{$i,j,k$}\hspace{5mm}}&=\tdun{$i,j,k$}\hspace{5mm},\\
R_{\tddeux{$i,j$}{$k$}\hspace{3mm}}&=\tddeux{$i,j$}{$k$}\hspace{3mm}-\tdun{$i,j,k$}\hspace{5mm},\\
R_{\tddeux{$i$}{$j,k$}\hspace{3mm}}&=\tddeux{$i$}{$j,k$}\hspace{3mm}-\tdun{$i,j,k$}\hspace{5mm},\\
R_{\tdun{$i$}\tdun{$j,k$}\hspace{2mm}}&=\tdun{$i$}\tdun{$j,k$}\hspace{2mm}-\tddeux{$i$}{$j,k$}\hspace{2mm}
-\tddeux{$j,k$}{$i$}\hspace{2mm}+\tdun{$i,j,k$}\hspace{5mm},\\
R_{\tdtroisdeux{$i$}{$j$}{$k$}}&=\tdtroisdeux{$i$}{$j$}{$k$}-\tddeux{$i$}{$j,k$}\hspace{2mm}
-\tddeux{$i,j$}{$k$}\hspace{2mm}+\tdun{$i,j,k$}\hspace{5mm},\\
R_{\tdtroisun{$i$}{$k$}{$j$}}&=\tdtroisun{$i$}{$k$}{$j$}-\tdtroisdeux{$i$}{$j$}{$k$}-\tdtroisdeux{$i$}{$k$}{$j$}
+\tddeux{$i$}{$j,k$}\hspace{2mm},\\
R_{\pdtroisun{$i$}{$j$}{$k$}}&=\pdtroisun{$i$}{$j$}{$k$}-\tdtroisdeux{$k$}{$j$}{$i$}-\tdtroisdeux{$j$}{$k$}{$i$}
+\tddeux{$j,k$}{$i$}\hspace{2mm},\\
R_{\tddeux{$i$}{$j$}\tdun{$k$}}&=\tddeux{$i$}{$j$}\tdun{$k$}-\tdtroisun{$i$}{$k$}{$j$}-\pdtroisun{$j$}{$i$}{$k$}
+\tdtroisdeux{$i$}{$k$}{$j$}-\tdun{$i,j$}\hspace{2mm}\tdun{$k$}
+\tddeux{$i,j$}{$k$}\hspace{2mm}+\tddeux{$k$}{$i,j$}\hspace{2mm}-\tdun{$i,j,k$}\hspace{3mm},\\
R_{\tdun{$i$}\tdun{$j$}\tdun{$k$}}&=\tdun{$i$}\tdun{$j$}\tdun{$k$}-\tddeux{$i$}{$j$}\tdun{$k$}-
\tddeux{$i$}{$k$}\tdun{$j$}-\tddeux{$j$}{$i$}\tdun{$k$}-\tddeux{$j$}{$k$}\tdun{$i$}-\tddeux{$k$}{$i$}\tdun{$j$}-\tddeux{$k$}{$j$}\tdun{$i$}\\
&-\tddeux{$i$}{$j,k$}\hspace{2mm}-\tddeux{$j$}{$i,k$}\hspace{2mm}-\tddeux{$k$}{$i,j$}\hspace{2mm}
-\tddeux{$j,k$}{$i$}\hspace{2mm}-\tddeux{$i,k$}{$j$}\hspace{2mm}-\tddeux{$i,j$}{$k$}\hspace{2mm}\\
&+\tdtroisun{$i$}{$k$}{$j$}+\tdtroisun{$j$}{$k$}{$i$}+\tdtroisun{$k$}{$j$}{$i$}
+\pdtroisun{$i$}{$j$}{$k$}+\pdtroisun{$j$}{$i$}{$k$}+\pdtroisun{$k$}{$i$}{$j$}+2 \tdun{$i,j,k$}\hspace{5mm}.
\end{align*}

\subsection{The products and the coproduct in the basis of ribbons}

{\bf Notations.}  \begin{enumerate}
\item Let $\T\in \TT_n$ and let $I,J\subseteq [n]$. We shall write $I\leq_\T J$ if for all $i\in I$, $j\in J$, $i\leq_\T j$.
\item Let $\T\in \TT_n$ and let $I,J\subseteq [n]$. We shall write $I<_\T J$ if for all $i\in I$, $j\in J$, $i<_\T j$.
\end{enumerate}

\begin{theo}\label{theo12}\begin{enumerate}
\item Let $\T\in \TT_k$, $\T'\in \TT_l$, $k,l\geq 0$. Then:
$$R_\T.R_{\T'}=\sum_{\substack{\T'' \in \TT_{k+l},\\ \T''_{\mid [k]}=\T,\: Std(\T''_{\mid[k+l]\setminus [k]})=\T'}} R_{\T''}.$$
\item Let $\T\in \TT_k$, $\T'\in \TT_l$, $k,l\geq 0$. Then:
$$R_\T\downarrow R_{\T'}=\sum_{\substack{\T'' \in \TT_{k+l},\\ \T''_{\mid [k]}=\T, \:Std(\T''_{\mid[k+l]\setminus [k]})=\T',
\\ [k]\leq_{\T''}[k+l]\setminus [k]}} R_{\T''}.$$
\item For all $\T\in \TT_n$, $n\geq 0$:
$$\Delta(R_\T)=\sum_{\substack{O\in \T,\\ [n]\setminus O<_\T O}} R_{Std(\T_{\mid [n]\setminus O})}\otimes R_{Std(\T_{\mid O})}.$$
\end{enumerate}\end{theo}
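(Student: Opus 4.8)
The unifying idea is to pass to preorders and then to invert the zeta transform of the refinement poset. By Alexandroff's correspondence a topology is determined by its preorder, the open sets of $\T$ being the ideals of $\leq_\T$; hence having more open sets means having fewer relations, and $\T'\leq\T$ is equivalent to the inclusion $\leq_\T\subseteq\leq_{\T'}$. Under this dictionary $\T_{\mid Y}$ has preorder $\leq_\T$ restricted to $Y$, while $\T.\T'$ (resp. $\T\downarrow\T'$) has for preorder the disjoint union of $\leq_\T$ on $[k]$ and $\leq_{\T'}$ on $B:=[k+l]\setminus[k]$, with no crossing pair (resp. together with every crossing pair of $[k]\times B$), exactly as computed in Section 2.2. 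Throughout, I write $\T=\sum_{\T'\leq\T}R_{\T'}$, so that the change of basis is the zeta transform and is invertible by M\"obius inversion.

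For (1) and (2) I would first record the order-theoretic equivalence
\[
\T''\leq\T.\T'\iff \T''_{\mid[k]}\leq\T\ \text{ and }\ Std(\T''_{\mid B})\leq\T',
\]
together with its $\downarrow$-version, obtained by appending the single clause $[k]\leq_{\T''}B$. Both are immediate from the dictionary: requiring $\leq_{\T.\T'}\subseteq\leq_{\T''}$ means requiring $\leq_\T\subseteq\leq_{\T''}$ on $[k]$ and $\leq_{\T'}\subseteq\leq_{\T''}$ on $B$ (there being no crossing relation to impose), and for $\downarrow$ one adds precisely the demand that all crossing pairs lie in $\leq_{\T''}$, i.e. $[k]\leq_{\T''}B$. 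Plugging this into $\T.\T'=\sum_{\T''\leq\T.\T'}R_{\T''}$ and partitioning the index set according to the pair $(\T''_{\mid[k]},Std(\T''_{\mid B}))$ gives, for all $\T,\T'$,
\[
\Bigl(\sum_{\T_1\leq\T}R_{\T_1}\Bigr)\Bigl(\sum_{\T_2\leq\T'}R_{\T_2}\Bigr)=\T.\T'=\sum_{\T_1\leq\T,\ \T_2\leq\T'}\ \sum_{\substack{\T''_{\mid[k]}=\T_1\\ Std(\T''_{\mid B})=\T_2}}R_{\T''}.
\]
A double M\"obius inversion over the two down-sets then yields the stated formula for $R_\T.R_{\T'}$; (2) is identical, carrying the clause $[k]\leq_{\T''}B$ through every step.

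Part (3) uses the same scheme with a single inversion. Expanding both legs of $\Delta(\T)=\sum_{O\in\T}Std(\T_{\mid[n]\setminus O})\otimes Std(\T_{\mid O})$ in ribbons, the coefficient of $R_{\T_1}\otimes R_{\T_2}$ is the number of $O\in\T$ with $\T_1\leq Std(\T_{\mid[n]\setminus O})$ and $\T_2\leq Std(\T_{\mid O})$; on the conjectural side, summing over $\T'\leq\T$, the coefficient of $R_{\T_1}\otimes R_{\T_2}$ counts pairs $(\T',O)$ with $\T'\leq\T$, $O\in\T'$, $[n]\setminus O<_{\T'}O$, $Std(\T'_{\mid[n]\setminus O})=\T_1$ and $Std(\T'_{\mid O})=\T_2$. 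I would match the two counts by a bijection: to an admissible $O$ on the first side assign that same $O$ together with the unique topology $\T'$ whose preorder is $\leq_{\T_1}$ on $[n]\setminus O$, $\leq_{\T_2}$ on $O$ (transported by the order isomorphisms), and all crossing pairs $[n]\setminus O<_{\T'}O$. Here $O$ is open in $\T'$, the two standardisations and the strict crossing condition hold by construction, and the crucial point is that $\T'\leq\T$ is \emph{equivalent} to $\T_1\leq Std(\T_{\mid[n]\setminus O})$ and $\T_2\leq Std(\T_{\mid O})$: since $O$ is an ideal of $\leq_\T$ there is no relation from $O$ to its complement in $\T$, so the crossing pairs of $\T'$ already contain those of $\T$, and the comparison $\T'\leq\T$ reduces to the two within-block inclusions. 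The inverse sends $(\T',O)$ to $O$, which lies in $\T$ because $\T'\leq\T$ forces every open set of $\T'$ to be open in $\T$. Equality of coefficients gives $\sum_{\T'\leq\T}(\text{conjectural }\Delta(R_{\T'}))=\Delta(\T)$, and M\"obius inversion concludes.

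The translation to preorders and the two inversions are formal once the comparisons are set up, so the genuine work sits in (3). The main obstacle is its bijection: one must verify that $\T'$ is \emph{uniquely} forced by the strict clause $[n]\setminus O<_{\T'}O$, which at once prescribes every crossing pair and forbids the reverse ones, and that the interaction between $O$ being open, the two standardisations, and the refinement $\T'\leq\T$ really collapses to the two simple inequalities. The strictness $<_\T$ together with the $Std$ maps are exactly what make the crossing relations behave, so I would concentrate the care there, the within-block bookkeeping being the same as in (1).
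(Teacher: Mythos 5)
Your proposal is correct and follows essentially the same route as the paper: the key equivalence $\T''\leq\T.\T'\iff\T''_{\mid[k]}\leq\T$ and $Std(\T''_{\mid[k+l]\setminus[k]})\leq\T'$ (with the extra clause $[k]\leq_{\T''}[k+l]\setminus[k]$ for $\downarrow$), the unique reconstruction of $\T'$ from $(O,\T_1,\T_2)$ with the strict crossing condition $[n]\setminus O<_{\T'}O$, and then inversion of the zeta transform are exactly the paper's first and second steps in each part. The only difference is presentational: you phrase the comparisons through preorders and name the inversion explicitly as M\"obius inversion, where the paper argues with open sets and verifies that the candidate operations have the same zeta transform, which amounts to the same thing.
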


\begin{proof} 1. {\it First step.} We first prove that for any $\T''\in \TT_{k+l}$, $\T''\leq \T.\T'$ if, and only if,
$\T''_{\mid [k]}\leq \T$ and $Std(\T''_{\mid[k+l]\setminus [k]})\leq \T'$. \\

$\Longrightarrow$. As $\T''\leq \T.\T'$, we obtain $\T''_{\mid [k]} \leq (\T.\T')_{\mid [k]}=\T$, and $Std(\T''_{\mid [k+l]\setminus [k]})\leq 
Std((\T.\T')_{\mid [k+l]\setminus [k]})=\T'$.

$\Longleftarrow$. Let $I$ be an open set of $\T''$. Then $I_1=I\cap [k]$ is an open set of $\T''_{\mid [k]}$, so $I_1$ is an open
set of $\T$. Moreover, $I_2=I\cap ([k+l] \setminus [k])(-k)$ is an open set of $Std(\T''_{\mid[k+l]\setminus [k]})$,
so $I_2$ is an open set of $\T'$. By definition of $\T.\T'$, $I_1\sqcup I_2[k]=I$ is an open set of $\T.\T'$, so $\T''\leq \T.\T'$.\\

{\it Second step.} We define a product $\star$ on $\h_\TT$ by the formula:
$$R_\T\star R_{\T'}=\sum_{\substack{\T'' \in \TT_{k+l},\\ \T''_{\mid [k]}=\T, Std(\T''_{\mid[k+l]\setminus [k]})=\T'}} R_{\T''},$$
for any $\T\in \TT_k$, $\T'\in \TT_l$, $k,l\geq 0$. Then:
\begin{align*}
\T\star \T'&=\sum_{\SS \leq \T,\SS'\leq \T'}R_\SS\star R_{\SS'}\\
&=\sum_{\SS \leq \T,\SS'\leq \T'}\sum_{\substack{\SS'' \in \TT_{k+l},\\
\SS''_{\mid [k]}=\SS, Std(\SS''_{\mid[k+l]\setminus [k]})=\SS'}} R_{\SS''}\\
&=\sum_{\substack{\SS'' \in \TT_{k+l},\\
\SS''_{\mid [k]}\leq \T, Std(\SS''_{\mid[k+l]\setminus [k]})\leq \T'}} R_{\SS''}\\
&=\sum_{\substack{\SS'' \in \TT_{k+l},\\
\mathcal{S''}\leq \T.\T'}} R_{\SS''}\\
&=\T.\T'.
\end{align*}
We used the first step for the fourth equality. So $\star=.$. \\

2. {\it First step.} We first prove that for any $\T''\in \TT_{k+l}$, $\T''\leq \T\downarrow \T'$ if, and only if,
$\T''_{\mid [k]}\leq \T$, $Std(\T''_{\mid[k+l]\setminus [k]})\leq \T'$ and $[k]\leq_{\T''} [k+l]\setminus[k]$. \\

$\Longrightarrow$. By restriction, $\T''_{\mid [k]}\leq \T$, $Std(\T''_{\mid[k+l]\setminus [k]})\leq \T'$.
Let $x\in [k]$ and $y \in [k+l]\setminus [k]$, and let $O$ be an open set of $\T''$ which contains $x$. It is also an open set of $\T\downarrow \T'$
which contains $x$, so it contains $[k+l]\setminus [k]$ by definition of $\T\downarrow \T'$. So $x \leq_{\T''} y$.

$\Longleftarrow$. As $[k]\leq_{\T''} [k+l]\setminus[k]$, any open sets of $\T''$ which contains an element of $[k]$ contains $[k+l]\setminus [k]$.
Hence, there are two types of open sets in $\T''$:
\begin{itemize}
\item Open sets $O$ contained in $[k+l]\setminus [k]$. Then $O(-k)$ is an open set of $Std(\T''_{\mid [k+l]\setminus [k]})$,
so it is an open set of $\T'$, and finally $O$ is an open set of $\T\downarrow \T'$.
\item Open sets $O$ which contain $[k+l]\setminus [k]$. Then $O\cap [k]$ is an open set of $\T''_{\mid [k]}$, so is an open set of $\T$.
Hence, $O$ is an open set of $\T\downarrow \T'$.
\end{itemize}
We obtain in this way that $\T''\leq \T\downarrow \T'$.\\

{\it Second step.} Using the first step, we conclude as in the second step of the first point. \\

3. {\it First step.} Let $O\in \T$, $\SS\leq \T_{\mid O}$ and $\mathcal{S'} \leq \T_{\mid [n]\setminus O}$.
It is not difficult to see that there exists a unique topology  $\T'\in \TT_n$ such that $\SS=\T'_{\mid O}$, 
$\mathcal{S'}=\T'_{\mid [n]\setminus O}$ and $[n]\setminus O<_{\T'} O$. It is:
$$\T'=\{\Omega \cup O\mid \Omega \in \SS'\}\cup \SS.$$

{\it Second step.} We define a coproduct on $\h_\TT$ by:
$$\delta(R_\T)=\sum_{\substack{O\in \T,\\ [n]\setminus O<_\T O}} R_{Std(\T_{\mid [n]\setminus O})}\otimes R_{Std(\T_{\mid O})},$$
for any $\T\in \TT$. Then:
\begin{align*}
\delta(\T)&=\sum_{\T'\leq \T}\delta(R_{\T'})\\
&=\sum_{\T'\leq \T} \sum_{O\in \T', \:([n]\setminus O)<_{\T'} O}
R_{Std(\T'_{\mid [n]\setminus O})}\otimes R_{Std(\T'_{\mid O})}\\
&=\sum_{O\in \T} \sum_{\substack{\T'\leq \T,\:O\in \T',\\
([n]\setminus O) <_{\T'} O}} R_{Std(\T'_{\mid [n]\setminus O})}\otimes R_{Std(\T'_{\mid O})}\\
&=\sum_{O\in \T}\sum_{\substack{\SS \leq Std(\T_{\mid [n]\setminus O}), \\\mathcal{S'}\leq Std(\T_{\mid O})}}
R_{\SS}\otimes R_{\mathcal{S'}}\\
&=\sum_{O\in \T}Std(\T_{\mid [n]\setminus O})\otimes Std(\T_{\mid O})\\
&=\Delta(\T).
\end{align*}
We used the first step for the third equality. So $\delta=\Delta$. \end{proof}

\section{Generalized T-partitions}

\label{section4}

\subsection{Definition}

\begin{defi} \label{defi13}
Let $\T \in \TT_n$. 
\begin{enumerate}
\item A generalized T-partition of $\T$ is a surjective map $f:[n] \longrightarrow [p]$ such that if $i \leq_\T j$ in $[n]$, then $f(i) \leq f(j)$ in $[p]$. 
If $f$ is a generalized T-partition of $\T$, we shall represent it by the packed word $f(1)\ldots f(n)$.
\item Let $f$ a generalized T-partition of $\T$. We shall say that $f$ is a (strict) T-partition if for all $i,j\in [n]$:
\begin{itemize}
\item $i<_\T j$ and $i>j$ implies that $f(i)<f(j)$ in $[p]$.
\item If $i<j<k$, $i\sim_\T k$ and $f(i)=f(j)=f(k)$, then $i\sim_\T j$ and $j\sim_\T k$.
\end{itemize}
\item The set of generalized T-partitions of $\T$ is denoted by $\P(\T)$; the set of (strict) T-partitions of $\T$ is denoted by $\P_s(\T)$.
\item If $f \in \P(\T)$, we put:
\begin{align*}
\ell_1(f)&=\sharp\{(i,j)\in [n]^2\mid i<_\T j,\: i<j,\:\mbox{ and }f(i)=f(j)\},\\
\ell_2(f)&=\sharp\{(i,j)\in [n]^2\mid i<_\T j, \:i>j,\:\mbox{ and }f(i)=f(j)\},\\
\ell_3(f)&=\sharp\{(i,j,k)\in [n]^3\mid i<j<k,\: i\sim_\T k, i\notsim j, \: j\notsim k\mbox{ and }f(i)=f(j)=f(k)\}.
\end{align*}
Note that $f$ is strict if, and only if, $\ell_2(f)=\ell_3(f)=0$.
\end{enumerate}\end{defi}

{\bf Example.} Let $\T=\tdtroisun{$2,4$}{$5$}{$1$}\hspace{2mm} \tdun{$3$}$. 
If $f$ is a packed word of length $5$, it is a generalized $T$-partition of $\T$ if, and only if, $f(2)=f(4)\leq f(1),f(5)$. Hence:
$$\P(\T)=\left\{\begin{array}{c}
(11111),(11112),(11211),(11212),(11213),(11312),(21111),\\
(21112),(21113),(21211),(21212),(21213),(21311),(21312),\\
(21313),(21314),(21413),(22122),(22123),(31112),(31211),\\
(31212),(31213),(31214),(31312),(31412),(32122),(32123),\\
(32124),(41213),(41312),(42123)
\end{array}\right\}.$$
Moreover, $f$ is a strict $T$-partition of $\T$ if, and only if, $f(2)=f(4)<f(1)$, $f(2)=f(4)\leq f(5)$, and $f(3)\neq f(2),f(4)$. Hence:
$$\P_s(\T)=\left\{\begin{array}{c}
(21211),(21212),(21213),(21311),(21312),(21313),(21314),\\
(21413),(31211),(31212),(31213),(31214),(31312),(31412),\\
(32122),(32123),(32124),(41213),(41312),(42123)
\end{array}\right\}.$$

{\bf Remarks.} \begin{enumerate}
\item Let $f\in \P(\T)$. If $i\sim_\T j$ in $[n]$, then $i \leq_\T j$ and $j \leq_\T i$, so $f(i)\leq f(j)$ and $f(j) \leq f(i)$, and finally $f(i)=f(j)$.
\item If $\T \in \TT$ is $T_0$, then the set of strict T-partitions of $\T$ is the set of P-partitions of the poset $\T$, as defined in \cite{Gessel,Stanley}.
We shall now omit the term "strict" and simply write "T-partitions".
\end{enumerate}

\begin{prop} \label{prop14}
Let $q=(q_1,q_2,q_3)\in \K^3$. We define a linear map $\Gamma_q:\h_\TT\longrightarrow \WQSym$ in the following way: for all $\T\in \TT_n$, $n\geq 0$,
$$\Gamma_q(\T)=\sum_{f\in \P(\T)} q_1^{\ell_1(f)}q_2^{\ell_2(f)}q_3^{\ell_3(f)}f(1)\ldots f(n).$$
Then $\Gamma_q$ is a homogeneous surjective Hopf algebra morphism. 
Moreover, $j\circ \Gamma_{(q_2,q_1,q_3)})=\Gamma_{(q_1,q_2,q_3)}\circ \iota$.
\end{prop}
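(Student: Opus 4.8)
The plan is to establish four separate claims: that $\Gamma_q$ lands in $\WQSym$ and is homogeneous (immediate, since a generalized $T$-partition of a topology on $[n]$ is a packed word of weight $n$), that it is a coalgebra morphism, that it is an algebra morphism, and finally the compatibility $j\circ\Gamma_{(q_2,q_1,q_3)}=\Gamma_{(q_1,q_2,q_3)}\circ\iota$; surjectivity will follow for free once the algebra-morphism property is in hand, because every packed word $w$ is itself a generalized $T$-partition of the associated topology $\T_w$ (with all three statistics $\ell_1,\ell_2,\ell_3$ vanishing, by construction of $\T_w$), so $w$ appears in $\Gamma_q(\T_w)$ with coefficient $1$.

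For the coalgebra-morphism property I would compute $\Delta\circ\Gamma_q(\T)$ and compare it with $(\Gamma_q\otimes\Gamma_q)\circ\Delta(\T)$. The key combinatorial fact is that cutting a packed word $f$ at a level $k$ (the coproduct of $\WQSym$ splits $f$ into $f_{\mid[k]}$ and $\mathrm{Pack}(f_{\mid\N_{>0}\setminus[k]})$) corresponds exactly to choosing an open set $O=f^{-1}(\{k+1,\ldots,\max f\})$ of $\T$, because the condition $i\leq_\T j\Rightarrow f(i)\leq f(j)$ forces every such preimage-of-a-final-segment to be an ideal of $\leq_\T$, i.e.\ an open set. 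Under this bijection a generalized $T$-partition $f$ of $\T$ with a chosen level restricts to a generalized $T$-partition of $Std(\T_{\mid[n]\setminus O})$ on the low block and of $Std(\T_{\mid O})$ on the high block, and conversely any pair glues back uniquely. The statistics split additively, $\ell_i(f)=\ell_i(f_{\text{low}})+\ell_i(f_{\text{high}})$ for each $i$, since every pair or triple counted by $\ell_1,\ell_2,\ell_3$ has all its entries related by $\leq_\T$ and equal under $f$, hence lies entirely within one of the two blocks; this makes the weights $q_1^{\ell_1}q_2^{\ell_2}q_3^{\ell_3}$ factor correctly across the tensor product.

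For the algebra-morphism property I would show $\Gamma_q(\T.\T')=\Gamma_q(\T).\Gamma_q(\T')$. Here one uses that the open sets of $\T.\T'$ are exactly the $O\sqcup O'(+n)$ with $O\in\T$, $O'\in\T'$, so a map $f$ on $[n+n']$ is a generalized $T$-partition of $\T.\T'$ if and only if its two halves, suitably packed, are generalized $T$-partitions of $\T$ and $\T'$ respectively and the global packing constraint is met; this matches precisely the defining sum of the product in $\WQSym$, which ranges over all packed words $f''$ whose two standardized-by-packing halves equal the given factors. Again each of $\ell_1,\ell_2,\ell_3$ is additive over the disjoint union because $\T.\T'$ introduces no $\leq_\T$-relations between the two blocks, so no counted pair or triple can straddle them; the products of the $q_i$-weights then factor.

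The main obstacle I expect is the bookkeeping around $\ell_3$ and the second strictness condition, which has no analogue in Stanley's special-poset setting. In the coproduct step one must check that a triple $i<j<k$ with $i\sim_\T k$, $i\notsim_\T j$, $j\notsim_\T k$, $f(i)=f(j)=f(k)$ cannot be separated by any admissible cut $O$: since $i\sim_\T k$, the elements $i$ and $k$ lie in the same $\sim_\T$-class and therefore in the same block $[n]\setminus O$ or $O$, and because $O$ is an ideal and $f$ is constant on the triple, the middle element $j$ is trapped in the same block as well, so the whole triple survives intact in one factor. The corresponding additivity for the product is easier, as no $\sim_\T$-class of $\T.\T'$ meets both blocks. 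Finally, for the identity $j\circ\Gamma_{(q_2,q_1,q_3)}=\Gamma_{(q_1,q_2,q_3)}\circ\iota$ I would use that $\leq_{\iota(\T)}=\geq_\T$, so $f$ is a generalized $T$-partition of $\iota(\T)$ iff $j(f)$ is one of $\T$ (applying the decreasing repacking $j$ reverses all inequalities), and that this exchange swaps the roles of $\ell_1$ and $\ell_2$ (strict-ascent versus strict-descent pairs with equal $f$-value) while leaving $\ell_3$ invariant, since the $\ell_3$-condition is symmetric under reversing $\leq_\T$.
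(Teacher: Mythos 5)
Your plan reproduces the paper's proof almost step for step: the same bijection between pairs (open set, pair of restricted partitions) and pairs (partition, cut level) for the coproduct, the same observation that no pair or triple counted by $\ell_1,\ell_2,\ell_3$ can straddle the two blocks because its entries share a common $f$-value, the same non-comparability argument for the product, and the same exchange $\ell_1\leftrightarrow\ell_2$, $\ell_3\mapsto\ell_3$ under $\iota$ and $j$. All of that is sound.

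The one genuine gap is the surjectivity argument. Knowing that $w$ appears with coefficient $1$ in $\Gamma_q(\T_w)$ does not by itself imply surjectivity (a map sending each basis vector $e_i$ of a two-dimensional space to $e_1+e_2$ has each $e_i$ appearing with coefficient $1$ in the image of "its" preimage, yet has rank one), and the algebra-morphism property, which you invoke, plays no role here since $\WQSym$ is not generated as an algebra by elements for which surjectivity is already known. What is actually needed, and what the paper supplies, is a triangularity statement: every $g\in\P(\T_w)$ with $g\neq w$ satisfies $\max(g)<\max(w)$ (because $w$ is the unique generalized $T$-partition of $\T_w$ realizing the maximal number of values), so that, ordering the packed words of length $n$ by decreasing $\max$, the family $\bigl(\Gamma_q(\T_w)\bigr)_w$ is unitriangular over the basis of packed words and hence spans. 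Adding this one observation closes the gap; everything else in your outline is the paper's argument.
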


\begin{proof} We shall use the following notations: if  $\T\in \TT$ and $f\in \P(\T)$, we put $\ell(f)=(\ell_1(f),\ell_2(f),\ell_3(f))$ and
$q^{\ell(f)}=q_1^{\ell_1(f)}q_2^{\ell_2(f)}q_3^{\ell_3(f)}$. \\

Let us first prove that  $\Gamma_q$ is an algebra morphism.
Let $\T,\T'\in \TT$, of respective degree $n$ and $n'$. Let $f=f(1)\ldots f(n+n')$ be a packed word.
If $1\leq i\leq n<j\leq n+n'$, then $i$ and $j$ are not comparable for $\leq_{\T.\T'}$. Hence, $f \in \P(\T.\T')$ if, and only if,
 $f'=Pack(f(1)\ldots f(n))\in \P(\T)$ and $f''=Pack(f(n+1)\ldots f(n+n')) \in \P(\T')$. Moreover, $\ell(f)=\ell(f')+\ell(f'')$, so:
\begin{align*} 
\Gamma_q(\T.\T')&=\sum_{f'\in \P(\T),f''\in \P(\T')} \sum_{\substack{Pack(f(1)\ldots f(n))=f'\\ Pack(f(n+1)\ldots f(n+n'))=f''}} q^{\ell(f)}f(1)\ldots f(n+n')\\
&=\sum_{f'\in \P(\T),f''\in \P(\T')}q^{\ell(f')}q^{\ell(f'')} \sum_{\substack{Pack(f(1)\ldots f(n))=f'\\ 
Pack(f(n+1)\ldots f(n+n'))=f''}}f(1)\ldots f(n+n')\\
&=\sum_{f'\in \P(\T),f''\in \P(\T')}  q^{\ell(f')}q^{\ell(f'')}f'. f''\\
&=\Gamma_q(\T).\Gamma_q(\T').
 \end{align*}
 
Let us now prove that $\Gamma_q$ is a coalgebra morphism. Let $\T \in \TT_n$. We consider the two following sets:
\begin{align*}
A&=\bigsqcup_{O\in \T} \P(\T_{\mid [n]\setminus O})\times \P(\T_{\mid O}),&B&=\{(f,i)\mid f\in \P(\T), 0\leq i \leq \max(f)\}.
\end{align*}
Let $(f,g) \in \P(\T_{\mid [n]\setminus O})\times \P(\T_{\mid O}) \subseteq A$. We define $h:[n]\longrightarrow \N$ 
by $h(i)=f(i)$ if $i\notin O$  and $h(i)=g(i)+\max(f)$ if $i\in O$. As $f$ and $g$ are packed words, $h(1)\ldots h(n)$ also is.
Let us assume that $i \leq_\T j$. Three cases are possible:
\begin{itemize}
\item $i,j \notin O$. As $f(i)\leq f(j)$, $h(i) \leq h(j)$.
\item $i,j \in O$. As $g(i)\leq g(j)$, $h(i)\leq h(j)$.
\item $i\notin O$ and $j\in O$. Then $h(i)=f(i)\leq \max(f)<\max(f)+g(j)=h(j)$.
\end{itemize}
Consequently, $h\in \P(\T)$. Hence, we define a map $\theta:A\longrightarrow B$, sending $(f,g)$ to $(h,\max(f))$.\\
 
$\theta$ is injective: if $\theta(f,g)=\theta(f',g')=(h,k)$, then $\max(f)=\max(f')=k$. Moreover:
$$O=h^{-1}(\{\max(f)+1,\ldots,\max(h)\})=h^{-1}(\{k+1,\ldots,\max(h)\})=O'.$$
Then $f=h_{\mid O}=h_{\mid O'}=f'$ and $g=Pack(h_{\mid [n] \setminus O})=Pack(h_{\mid [n] \setminus O})=g'$.
Finally, $(f,g)=(f',g')$. \\
 
$\theta$ is surjective: let $(h,k) \in B$. We put $O=h^{-1}(\{k+1,\ldots \max(h)\})$. Let $i\in O$ and $j\in [n]$, such that 
$i\leq_\T j$. As $h\in \P(\T)$, $h(j)\geq h(i)>k$, so $j\in O$ and $O$ is an open set of $\T$.
Let $f=h_{\mid [n] \setminus O}$ and $g=Pack(h_{\mid O})$. By restriction, $f\in \P(\T_{\mid [n] \setminus O})$ and 
$g\in \P(\T_{\mid O})$. Moreover, as $h$ is a packed word, $\max(f)=k$, and for all $i\in O$, $g(i)=h(i)-k$. 
This implies that $\theta(f,g)=(h,k)$.

Let $(f,g) \in \P(\T_{\mid [n]\setminus O})\times \P(\T_{\mid O}) \subseteq A$, and let $\theta(f,g)=(h,k)$. 
If $i\notin O$ and $j\in O$, $h(i)<h(j)$. Hence:
\begin{align*}
\ell_1(h)&=\sharp\{(i,j)\in ([n]\setminus O)^2\mid i<_\T j,\: i<j\mbox{ and }h(i)=h(j)\}\\
&+\sharp\{(i,j)\in O^2\mid i<_\T j,\:i<j\mbox{ and }h(i)=h(j)\}\\
&=\sharp\{(i,j)\in ([n]\setminus O)^2\mid i<_\T j,\:i<j\mbox{ and }f(i)=f(j)\}\\
&+\sharp\{(i,j)\in O^2\mid i<_\T j,\:i<j\mbox{ and }g(i)+\max(f)=g(j)+\max(f)\}\\
&=\ell_1(f)+\ell_1(g).
\end{align*}
Similarly, $\ell_2(h)=\ell_2(f)+\ell_2(g)$ and $\ell_3(h)=\ell_3(f)+\ell_3(g)$. We obtain:
\begin{align*}
\Delta\circ \Gamma_q(\T)&=\sum_{h\in \P(\T)}\sum_{0\leq k\leq \max(h)}
q^{\ell(h)} h_{\mid h^{-1}(\{1,\ldots,k\})}\otimes Pack\left(h_{\mid h^{-1}(\{k+1,\ldots,\max(h)\})}\right)\\
&=\sum_{(h,k)\in B}
q^{\ell(h)} h_{\mid h^{-1}(\{1,\ldots,k\})}\otimes Pack\left(h_{\mid h^{-1}(\{k+1,\ldots,\max(h)\})}\right)\\
&=\sum_{(f,g)\in A}q^{\ell(f)}q^{\ell(g)} f\otimes g\\
&=\sum_{O\in \T}\left(\sum_{f\in \P(\T_{\mid [n]\setminus O})}
q^{\ell(f)}f\right)\otimes \left(\sum_{g\in \P(\T_{\mid O})} q^{\ell(g)}g\right)\\
&=(\Gamma_q \otimes \Gamma_q)\circ \Delta(\T).
\end{align*}
Consequently, $\Gamma_q$ is a Hopf algebra morphism.\\

Let $f$ be a packed word of length $n$, and $\T_f$ be the associated topology, as defined in section \ref{section2}. 
Note that $f \in \P(\T_f)$; moreover, if $g \in \P(\T_f)$ is different from $f$, then $\max(g)<\max(f)$. Hence:
$$\Gamma_q(\T_f)=f+\left(\mbox{linear span of packed words $g$ of maximum $<max(f)$}\right).$$
In particular, if $f=(1\ldots 1)$, $\T_w=\tdun{$1,\ldots,n $}\hspace{7.5mm}$, and $\Gamma_q(T_f)=(1\ldots 1)$.
By a triangularity argument, $\Gamma_q$ is surjective. \\

Let $\T \in \TT$. It is not difficult to prove that $\P(\T)=j\left(\P(\iota(\T))\right)$. Moreover, if $f\in \P(\iota(\T))$ and $g=j(f)$, then 
$\ell_1(f)=\ell_2(g)$, $\ell_2(f)=\ell_1(g)$ and $\ell_3(f)=\ell_3(g)$. So:
$$j\circ \Gamma_{(q_2,q_2,q_3)}\circ \iota(\T)=\sum_{g \in \P(\T)} q_2^{\ell_2(g)} q_1^{\ell_1(g)} q_2^{\ell_2(g)} g
=\Gamma_{(q_1,q_2,q_3)}(\T),$$
so $j\circ \Gamma_{(q_2,q_2,q_3)}\circ \iota=\Gamma_{(q_1,q_2,q_3)}$. \end{proof}

{\bf Examples.}
\begin{align*}
\Gamma_q(\tdun{$1$})&=(1),\\
\Gamma_q(\tdun{$1$}\tdun{$2$})&=(12)+(21)+(11),\\
\Gamma_q(\tddeux{$1$}{$2$})&=(12)+q_1(11),\\
\Gamma_q(\tddeux{$2$}{$1$})&=(21)+q_2(11),\\
\Gamma_q(\tdun{$1,2$}\hspace{3mm})&=(11),\\
\Gamma_q(\tdtroisun{$1$}{$3$}{$2$})&=(123)+(132)+(122)+q_1(112)+q_1(121)+q_1^2(111),\\
\Gamma_q(\tdtroisun{$2$}{$3$}{$1$})&=(213)+(312)+(212)+q_2(112)+q_1(211)+q_1q_2(111),\\
\Gamma_q(\tdtroisun{$3$}{$2$}{$1$})&=(231)+(321)+(221)+q_2(121)+q_2(211)+q_2^2(111),\\
\Gamma_q(\tddeux{$1,2$}{$3$}\hspace{3mm})&=(112)+q_1^2(111),\\
\Gamma_q(\tddeux{$1,3$}{$2$}\hspace{3mm})&=(121)+q_1q_2q_3(111),\\
\Gamma_q(\tddeux{$2,3$}{$1$}\hspace{3mm})&=(211)+q_2^2(111),\\
\Gamma_q(\tddeux{$1$}{$2,3$}\hspace{3mm})&=(122)+q_1^2(111),\\
\Gamma_q(\tddeux{$2$}{$1,3$}\hspace{3mm})&=(212)+q_1q_2q_3(111),\\
\Gamma_q(\tddeux{$3$}{$1,2$}\hspace{3mm})&=(221)+q_2^2(111).
\end{align*}

{\bf Remarks.} \begin{enumerate}
\item In particular:
\begin{align*}
\Gamma_{(1,1,1)}(\T)&=\sum_{f\in \P(\T)} f(1)\ldots f(n),&
\Gamma_{(1,0,0)}(\T)&=\sum_{f\in \P_s(\T)} f(1)\ldots f(n).
\end{align*}
\item The restriction of $\Gamma_{(1,0,0)}$ to $\h_\SP$ is the map $\Gamma$ defined in section \ref{section1.2}.
\end{enumerate}

\subsection{Linear extensions}

\begin{defi} \label{defi15}
Let $\T\in \TT_n$, $n\geq 0$. A linear extension of $\T$ is an ordered partition $A=(A_1,\ldots,A_k)$ of $[n]$ such that:
\begin{itemize}
\item the equivalence classes of $\sim_\T$ are $A_1,\ldots,A_k$;
\item if, in the poset $\overline{\T}$, $A_i\leq_\T A_j$, then $i\leq j$.
\end{itemize}
The set of linear extensions of $\T$ is denoted by $\L(\T)$.
\end{defi}

{\bf Notations.} If $A=(A_1,\ldots,A_k)$ is a linear extension of $\T \in \TT_n$, we bijectively  associate to $A$ the packed word $f=
f(1)\ldots f(n)$  of length $n$ such that for all $i\in [n]$, for all $j\in [k]$, $f(i)=j$ if, and only if $i\in A_j$. 
We shall now use the packed word $f$ instead of $A$.
\\

{\bf Example.} Let $\T=\tdtroisun{$2,4$}{$1,5,6$}{$3$}\hspace{4mm}$.  The linear extensions of $\T$ are
$(\{2,4\},\{3\},\{1,5,6\})$ and $(\{2,4\},\{1,5,6\},\{3\})$, or, expressed in packed words, $(312133)$ and $(213122)$.\\

{\bf Remark.} Let $\T\in \TT_n$, $n \geq 0$, and $f=f(1)\ldots f(n)$ be a packed word of length $n$. Then $f\in \L(\T)$ if, and only if:
\begin{enumerate}
\item for all $i,j \in [n]$, $f(i)=f(j)$ if, and only if, $i\sim_\T j$;
\item for all $i,j \in [n]$, $i<_\T j$ implies that $f(i)<f(j)$.
\end{enumerate}
Hence, $\L(\T) \subseteq P(\T)$ and more precisley:
$$\L(\T)=\{f\in \P(\T)\mid \max(f)=\sharp \overline{\T}\}=\{f\in \P_s(\T)\mid \max(f)=\sharp \overline{\T}\}.$$

\begin{prop}\label{prop16}\begin{enumerate}
\item Let $f',f''$ be two packed words, of respective length $n$ and $n'$. We put:
$$f'\shuffle f''= \sum_{\substack{Pack(f(1)\ldots f(n))=f'\\ Pack(f(n+1)\ldots f(n+n'))=f''\\ 
\{f(1),\ldots f(n)\}\cap \{f(n+1),\ldots, f(n+n')\}=\emptyset}} f.$$
This defines a product on $\WQSym$, such that $(\WQSym,\shuffle ,\Delta)$ is a Hopf algebra.
\item Let $L$ be the following map:
$$L:\left\{\begin{array}{rcl}
\h_\TT&\longrightarrow&\WQSym\\
\T&\longrightarrow&\displaystyle \sum_{f\in \L(\T)}f.
\end{array}\right.$$
Then $L$ is a surjective Hopf morphism from $\h_\TT$ to $(\WQSym,\shuffle ,\Delta)$. Moreover, $j\circ L=L\circ \iota$.
\end{enumerate}\end{prop}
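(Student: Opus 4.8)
I would treat the two assertions in turn. For (1), associativity and unitality of $\shuffle$ are purely combinatorial: iterating the definition, both $(f_1\shuffle f_2)\shuffle f_3$ and $f_1\shuffle(f_2\shuffle f_3)$ equal the sum of all packed words $f$ of length $n_1+n_2+n_3$ whose three consecutive factors pack to $f_1,f_2,f_3$ and whose three value sets are pairwise disjoint; since $f$ is packed, this last condition means these sets partition $\{1,\dots,\max(f)\}$, a description manifestly symmetric in the three arguments, and the empty word is the unit. Since $(\WQSym,.,\Delta)$ is already known to be a Hopf algebra, $\Delta$ is coassociative, counital and satisfies $\Delta(1)=1\otimes 1$; hence the only remaining point is that $\Delta$ is an algebra morphism for $\shuffle$. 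As $\WQSym$ is graded and connected, an antipode then exists automatically, so $(\WQSym,\shuffle,\Delta)$ is a graded connected Hopf algebra.

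The compatibility $\Delta(f'\shuffle f'')=\Delta(f')\shuffle\Delta(f'')$ (with the product taken componentwise on $\WQSym\otimes\WQSym$) is the first computational core, and I would prove it by matching indexing data. A left-hand term is a pair $(f,k)$ with $f$ a term of $f'\shuffle f''$ and $0\le k\le\max(f)$; let $S'\sqcup S''=\{1,\dots,\max(f)\}$ be the disjoint value sets of the two factors of $f$. Cutting at level $k$ determines $a=\sharp(S'\cap[k])$ and $b=\sharp(S''\cap[k])$ with $a+b=k$, and since $f$ is packed its low part $f_{\mid[k]}$ is again packed and lies in $f'_{\mid[a]}\shuffle f''_{\mid[b]}$, while $Pack(f_{\mid\N_{>0}\setminus[k]})$ lies in $Pack(f'_{\mid\N_{>0}\setminus[a]})\shuffle Pack(f''_{\mid\N_{>0}\setminus[b]})$. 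Conversely, a pair $(a,b)$ together with a term of each of these two shuffles reconstructs $(f,k)$ uniquely with $k=a+b$, which gives the desired bijection between the two families of terms.

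For (2), that $L$ is an algebra morphism from $(\h_\TT,.)$ follows from the description of $\T.\T'$, whose preorder is the disjoint union $\leq_\T\sqcup\leq_{\T'}$; hence $\overline{\T.\T'}=\overline{\T}\sqcup\overline{\T'}$ as posets, and a linear extension of a disjoint union of posets is exactly an interleaving of a linear extension of each summand. In packed words this says that the restriction to $[n]$ packs to some $f'\in\L(\T)$, the restriction to $[n+n']\setminus[n]$ to some $f''\in\L(\T')$, with the two value sets disjoint, which is precisely the definition of $f'\shuffle f''$; thus $L(\T.\T')=L(\T)\shuffle L(\T')$. For the coalgebra morphism property I would use the dictionary between open sets and value-level cuts: given $f\in\L(\T)$ and $0\le k\le\max(f)$, the set $O=f^{-1}(\{k+1,\dots,\max(f)\})$ is open in $\T$ (because $i\leq_\T j$ forces $f(i)\le f(j)$), and conversely each triple $(O,f_1,f_2)$ with $O\in\T$, $f_1\in\L(Std(\T_{\mid[n]\setminus O}))$ and $f_2\in\L(Std(\T_{\mid O}))$ reassembles to a unique $f\in\L(\T)$ cut at $k=\max(f_1)$, using that $O$ being open forbids any $\sim_\T$- or $<_\T$-relation from $O$ to its complement. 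This is the bijection already used for $\Gamma_q$ in Proposition \ref{prop14}, restricted to linear extensions, and it yields $\Delta(L(\T))=\sum_{O\in\T}L(Std(\T_{\mid[n]\setminus O}))\otimes L(Std(\T_{\mid O}))=(L\otimes L)\circ\Delta(\T)$.

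Surjectivity of $L$ is then immediate: for any packed word $f$ the associated topology $\T_f$ (Section \ref{section2}) has a total preorder $\leq_f$, so $\overline{\T_f}$ is a chain whose unique linear extension is $f$, whence $L(\T_f)=f$. The relation $j\circ L=L\circ\iota$ follows from $\leq_{\iota(\T)}=\geq_\T$: the involution $j$ reverses the value order of packed words, and this reversal sends $\L(\iota(\T))$ bijectively onto $\L(\T)$ while preserving $\sim_\T$-classes, so $j(L(\iota(\T)))=L(\T)$. I expect the main obstacle to be the bialgebra compatibility of $\shuffle$ with $\Delta$ in (1): the surjectivity, the identity involving $j$ and $\iota$, and the coproduct compatibility of $L$ are either immediate or direct restrictions of the argument already given for $\Gamma_q$, whereas the shuffle--deconcatenation bijection must be set up carefully so as to track the two disjoint value sets across the level cut.
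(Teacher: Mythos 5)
Your proof is correct, but it handles part (1) by a genuinely different route from the paper. You establish the Hopf algebra structure of $(\WQSym,\shuffle,\Delta)$ directly: associativity of $\shuffle$ via the symmetric description of triple shuffles (three blocks packing to the given words, with pairwise disjoint value sets partitioning $[\max(f)]$), and the compatibility $\Delta(f'\shuffle f'')=\Delta(f')\shuffle\Delta(f'')$ via an explicit bijection matching a cut level $k$ of a shuffle term with a pair of cut levels $(a,b)=(\sharp(S'\cap[k]),\sharp(S''\cap[k]))$ of the factors. The paper instead proves nothing directly about $\shuffle$ beyond its definition: it first shows $L(\T.\T')=L(\T)\shuffle L(\T')$, $\Delta\circ L=(L\otimes L)\circ\Delta$ and the surjectivity $L(\T_f)=f$, and then \emph{transports} the Hopf structure of $\h_\TT$ along the surjection $L$ — associativity of $\shuffle$ and the bialgebra axiom on the image follow formally because $L$ is onto and intertwines both operations. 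The paper's argument is shorter and avoids the delicate shuffle--deconcatenation bijection you rightly flag as the main obstacle, at the cost of making part (1) logically dependent on part (2); your version is self-contained and would establish the Hopf algebra $(\WQSym,\shuffle,\Delta)$ independently of finite topologies, which is a mild gain in generality. The remaining steps (the product and coproduct compatibilities of $L$, surjectivity via $\L(\T_f)=\{f\}$, and $j\circ L=L\circ\iota$ from $\leq_{\iota(\T)}=\geq_\T$) coincide with the paper's, including the reuse of the bijection $\theta$ from the proof for $\Gamma_q$ restricted to linear extensions.
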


\begin{proof} Let $\T,\T'\in \TT$, of respective degree $n$ and $n'$. Let $f=f(1)\ldots f(n+n')$ be a packed word.
If $1\leq i\leq n<j\leq n+n'$, then $i$ and $j$ are not comparable for $\leq_{\T.\T'}$. Hence, $f \in \L(\T.\T')$ if, and only if:
\begin{itemize}
\item $f'=Pack(f(1)\ldots f(n))\in \L(\T)$ and $f''=Pack(f(n+1)\ldots f(n+n')) \in \L(\T')$. 
\item $\{f(1),\ldots f(n)\}\cap \{f(n+1),\ldots,f(n+n')\}=\emptyset$.
\end{itemize}
So:
\begin{align*} 
L(\T.\T')&=\sum_{f'\in \L(\T),f''\in \L(\T')} \sum_{\substack{Pack(f(1)\ldots f(n))=f'\\ Pack(f(n+1)\ldots f(n+n'))=f''\\
\{f(1),\ldots f(n)\}\cap \{f(n+1),\ldots,f(n+n')\}=\emptyset}}
f(1)\ldots f(n+n')\\
&=\sum_{f'\in \L(\T),f''\in \L(\T')} f\shuffle f'\\
&=L(\T)\shuffle L(\T').
 \end{align*}
 
 Let $\T \in \TT_n$. We consider the two following sets:
 \begin{align*}
 A&=\bigsqcup_{O\in \T} \L(\T_{\mid [n]\setminus O})\times \L(\T_{\mid O}),&B&=\{(f,i)\mid f\in \L(\T), 0\leq i \leq \max(f)\}.
 \end{align*}
 Let $(f,g) \in \L(\T_{\mid [n]\setminus O})\times \L(\T_{\mid O}) \subseteq A$. We define $h:[n]\longrightarrow \N$ 
by $h(i)=f(i)$ if $i\notin O$  and $h(i)=g(i)+\max(f)$ if $i\in O$. As $f$ and $g$ are packed words, $h(1)\ldots h(n)$ also is.
Moreover, as $O$ and $[n]\setminus O$ are union of equivalence classes of $\sim_\T$, for all $i,j \in [n]$, $h(i)=h(j)$ if, and only if, $i\sim_\T j$.
 Let us assume that $i \leq_\T j$. Three cases are possible:
 \begin{itemize}
 \item $i,j \notin O$. As $f(i)\leq f(j)$, $h(i) \leq h(j)$.
 \item $i,j \in O$. As $g(i)\leq g(j)$, $h(i)\leq h(j)$.
 \item $i\notin O$ and $j\in O$. Then $h(i)=f(i)\leq \max(f)<\max(f)+g(j)=h(j)$.
 \end{itemize}
 Consequently, $h\in \L(\T)$. Hence, we define a map $\theta:A\longrightarrow B$, sending $(f,g)$ to $(h,\max(f))$.
The proof of the bijectivity of $\theta$ is similar to the proof for the case of generalized T-partitions.  We obtain:
\begin{align*}
\Delta\circ L(\T)&=\sum_{h\in \L(\T)}\sum_{0\leq k\leq \max(h)}
 h_{\mid h^{-1}(\{1,\ldots,k\})}\otimes Pack\left(h_{\mid h^{-1}(\{k+1,\ldots,\max(h)\})}\right)\\
&=\sum_{(h,k)\in B}
 h_{\mid h^{-1}(\{1,\ldots,k\})}\otimes Pack\left(h_{\mid h^{-1}(\{k+1,\ldots,\max(h)\})}\right)\\
&=\sum_{(f,g)\in A} f\otimes g\\
&=\sum_{O\in \T}\left(\sum_{f\in \L(\T_{\mid [n]\setminus O})}f\right)\otimes \left(\sum_{g\in \L(\T_{\mid O})} g\right)\\
&=(L \otimes L)\circ \Delta(\T).
\end{align*}

Let $f$ be a packed word. Then $\L(\T_f)=\{f\}$, so $L(\T_f)=f$, that is $L$ is surjective.
As $L$ is compatible with the products and the coproducts of both $\h_\T$ and $(\WQSym,\shuffle ,\Delta)$, its image, that is to say  
$(\WQSym,\shuffle ,\Delta)$, is a Hopf algebra, and $L$ is a Hopf algebra morphism. 

It is not difficult to prove that for all $\T\in \TT$, $j(\L(\iota(\T)))=\L(\T)$, which implies that $j\circ L \circ \iota=L$. \end{proof}

{\bf Remark.} The product $\shuffle$ is defined and used in  \cite{FPT}. \\

{\bf Examples.}
\begin{align*}
L(\tdun{$1$})&=(1),&L(\tdun{$1$}\tdun{$2$})&=(12)+(21),&L(\tddeux{$1$}{$2$})&=(12),\\
L(\tddeux{$2$}{$1$})&=(21),&L(\tdun{$1,2$}\hspace{3mm})&=(11),&L(\tdun{$1,2,3$}\hspace{5mm})&=(111),\\
L(\tddeux{$1,2$}{$3$}\hspace{3mm})&=(112),&L(\tddeux{$1,3$}{$2$}\hspace{3mm})&=(121),&L(\tddeux{$2,3$}{$1$}\hspace{3mm})&=(211),\\
L(\tddeux{$3$}{$1,2$}\hspace{3mm})&=(221),&L(\tddeux{$2$}{$1,3$}\hspace{3mm})&=(212),&L(\tddeux{$1$}{$2,3$}\hspace{3mm})&=(122),\\
L(\tdtroisun{$1$}{$3$}{$2$})&=(123)+(132),&L(\tdtroisun{$2$}{$3$}{$1$})&=(213)+(312),&L(\tdtroisun{$3$}{$2$}{$1$})&=(231)+(321),\\
L(\tdtroisdeux{$1$}{$2$}{$3$})&=(123),&L(\tdtroisdeux{$2$}{$1$}{$3$})&=(213),&L(\tdtroisdeux{$3$}{$1$}{$2$})&=(231),\\
L(\tdtroisdeux{$1$}{$3$}{$2$})&=(132),&L(\tdtroisdeux{$2$}{$3$}{$1$})&=(312),&L(\tdtroisdeux{$3$}{$2$}{$1$})&=(321).
\end{align*}
Here is an example of product $\shuffle$:
\begin{align*}
(112)\shuffle(12)&=(11234)+(11324)+(11423)+(22314)+(22413)+(33412).
\end{align*}

\subsection{From linear extensions to T-partitions}

\begin{defi} \label{defi17}
Let $f,g$ be two packed words of the same length $n$. We shall say that $g\leq f$ if:
\begin{itemize}
\item For all $i,j \in [n]$, $f(i)\leq f(j)$ implies that $g(i)\leq g(j)$.
\item For all $i,j \in [n]$, $f(i)>f(j)$ and $i<j$ implies that $g(i)>g(j)$.
\item For all $i,j \in [n]$, $f(i)=f(j)$ implies that $g(i)=g(j)$.
\end{itemize}\end{defi}
The relation $\leq$ is a partial order on the set of packed words of length $n$. Here are the Hasse graphs of these posets if $n=2$ or $n=3$:
$$\xymatrix{(12)\ar@{-}[d]&(21)\\ (11)};$$
$$\xymatrix{&(123)\ar@{-}[rd] \ar@{-}[ld] &\\(122)\ar@{-}[rd] &&(112)\ar@{-}[ld]\\&(111)&}\hspace{.5cm}
\xymatrix{(132)\ar@{-}[d]\\ (121)}\hspace{.5cm}\xymatrix{(213)\ar@{-}[d]\\ (212)}\hspace{.5cm}
\xymatrix{(231)\ar@{-}[d]\\ (221)}\hspace{.5cm}\xymatrix{(312)\ar@{-}[d]\\ (211)}\hspace{.5cm}(321)$$
 
 {\bf Remark.} This order is also introduced and used in \cite{Maurice}.
 
\begin{lemma}\label{lemma18}
Let $f,g$ be packed words of length $n$. Then $g\leq f$ if, and only if, $g$ is a T-partition of $\T_f$.
\end{lemma}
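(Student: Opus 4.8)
The plan is to translate everything into direct conditions on the two packed words $f$ and $g$, using the dictionary supplied by the construction of $\T_f$. Recall that by definition $i\leq_{\T_f}j$ iff $f(i)\leq f(j)$; consequently $i\sim_{\T_f}j$ iff $f(i)=f(j)$, and $i<_{\T_f}j$ iff $f(i)<f(j)$. Under this dictionary the three clauses defining $g\leq f$ in Definition \ref{defi17} and the clauses defining a (strict) $T$-partition of $\T_f$ in Definition \ref{defi13} become statements purely about the order relations between the values $f(i),f(j)$ and $g(i),g(j)$, and the bulk of the proof is simply to match them up.

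First I would record the obvious matches. The generalized $T$-partition condition ``$i\leq_{\T_f}j\Rightarrow g(i)\leq g(j)$'' is literally the first clause of $g\leq f$, namely ``$f(i)\leq f(j)\Rightarrow g(i)\leq g(j)$''; note that surjectivity of $g$ onto $[\max(g)]$ is automatic for a packed word, so membership $g\in\P(\T_f)$ is equivalent to this single clause. Next, the first strictness condition ``$i<_{\T_f}j$ and $i>j$ $\Rightarrow g(i)<g(j)$'' becomes ``$f(i)<f(j)$ and $i>j$ $\Rightarrow g(i)<g(j)$''; relabelling the pair $(i,j)$ as $(j,i)$ turns this into ``$f(i)>f(j)$ and $i<j$ $\Rightarrow g(i)>g(j)$'', which is exactly the second clause of $g\leq f$. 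Thus the two definitions already agree on these two pieces.

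It then remains to handle the two ``extra'' clauses, one on each side, and the point is that each is redundant given the matched clauses. For the implication $g\leq f\Rightarrow g\in\P_s(\T_f)$, the only thing left to check is the second strictness condition: given $i<j<k$ with $f(i)=f(k)$ and $g(i)=g(j)=g(k)$, I would rule out $f(j)\neq f(i)$ by the second clause of $g\leq f$. Indeed, if $f(j)<f(i)$ then $f(i)>f(j)$ with $i<j$ forces $g(i)>g(j)$, while if $f(j)>f(i)=f(k)$ then $f(j)>f(k)$ with $j<k$ forces $g(j)>g(k)$; both contradict $g(i)=g(j)=g(k)$. Hence $f(i)=f(j)=f(k)$, i.e. $i\sim_{\T_f}j$ and $j\sim_{\T_f}k$, as required. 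Conversely, for $g\in\P_s(\T_f)\Rightarrow g\leq f$, the only clause of $g\leq f$ not yet obtained is the third, ``$f(i)=f(j)\Rightarrow g(i)=g(j)$''; but $f(i)=f(j)$ means $i\sim_{\T_f}j$, so applying the generalized condition to $i\leq_{\T_f}j$ and to $j\leq_{\T_f}i$ gives $g(i)\leq g(j)$ and $g(j)\leq g(i)$, whence $g(i)=g(j)$ (this is exactly the remark following Definition \ref{defi13}).

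I do not anticipate a genuine obstacle here: the statement is an unwinding of definitions once the dictionary relating $\leq_{\T_f}$ on indices to $\leq$ on values is in place. The only place needing a small argument rather than a direct match is the second strictness clause, where the short trichotomy on the value $f(j)$ relative to $f(i)=f(k)$ is the crux; everything else is a relabelling or an immediate consequence of the generalized condition. One bookkeeping remark worth stating explicitly is that the second strictness condition plays no role in the reverse direction and the third clause of $\leq$ plays no role in the forward direction, so each of the two ``extra'' conditions is in fact implied by the conditions already matched.
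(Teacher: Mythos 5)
Your proof is correct and follows essentially the same route as the paper's: both directions reduce to matching the clauses of Definition \ref{defi17} against those of Definition \ref{defi13} under the dictionary $i\leq_{\T_f}j\Leftrightarrow f(i)\leq f(j)$, with the same trichotomy on $f(j)$ relative to $f(i)=f(k)$ handling the second strictness condition and the same two-sided application of the generalized condition yielding the clause $f(i)=f(j)\Rightarrow g(i)=g(j)$. Nothing is missing.
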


\begin{proof}   $\Longrightarrow$. Let us assume that $g\leq f$. If $i\leq_{\T_f} j$, then $f(i)\leq f(j)$, so $g(i)\leq g(j)$.
If $i<_{\T_f} j$ and $i>j$, then $f(i)<f(j)$, so $g(i)<g(j)$. We assume that $i<j<k$, $i\sim_{\T_f} k$ and $g(i)=g(j)=g(k)$.
As $i\sim_\T k$,  $f(i)=f(k)$. If $f(j)<f(i)$, as $g\leq f$, we obtain $g(j)<g(i)$: contradiction. So $f(i)\leq f(j)$.
If $f(j)>f(k)$, as $g\leq f$, we obtain $g(j)>g(k)$: contradiction. So $f(j)\leq f(k)$, and finally, $f(i)=f(j)=f(k)$,
so $i\sim_{\T_f} j$ and $j\sim_{\T_f} k$. Hence, $g\in \P_s(\T_f)$.\\

$\Longleftarrow$. Let us assume that $g\in \P_s(\T_f)$. If $f(i)\leq f(j)$, then $i\leq_{\T_f} j$, so $g(i) \leq g(j)$.
If $f(i)=f(j)$, then $i\sim_{\T_f} j$, so $g(i)=g(j)$. If $f(i)<f(j)$ and $i>j$, then $i<_{\T_f} j$ and $i>j$, so $g(i)<g(j)$.
This implies $g \leq f$. \end{proof}

\begin{prop}\label{prop19}
We define:
$$\varphi_{(1,0,0)}:\left\{\begin{array}{rcl}
\WQSym&\longrightarrow&\WQSym\\
f&\longrightarrow&\displaystyle \sum_{g\leq f} g.
\end{array}\right.$$
Then $\varphi_{(1,0,0)}$ is a Hopf algebra isomorphism from $(\WQSym,\shuffle ,\Delta)$ to $(\WQSym,.,\Delta)$ such that 
the following diagram commutes:
$$\xymatrix{\h_\TT\ar[r]^{L\hspace{15mm}}\ar[rd]_{\Gamma_{(1,0,0)}}&(\WQSym,\shuffle ,\Delta)\ar[d]^{\varphi_{(1,0,0)}}\\
&(\WQSym,.,\Delta)}$$
\end{prop}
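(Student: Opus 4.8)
The plan is to establish the commutativity of the diagram first, since it carries all the combinatorial content (it is the Hopf-algebraic incarnation of Stanley's decomposition), and then to deduce the algebraic and coalgebraic properties of $\varphi_{(1,0,0)}$ formally, by transport along the surjection $L$. Concretely, for $\T\in\TT_n$ I would compute, using Lemma \ref{lemma18},
$$\varphi_{(1,0,0)}\circ L(\T)=\sum_{f\in\L(\T)}\ \sum_{g\leq f}g=\sum_{f\in\L(\T)}\ \sum_{g\in\P_s(\T_f)}g,$$
and compare this with $\Gamma_{(1,0,0)}(\T)=\sum_{g\in\P_s(\T)}g$. The whole point is thus to prove the set-theoretic decomposition $\P_s(\T)=\bigsqcup_{f\in\L(\T)}\{g\mid g\leq f\}$, i.e. that every strict $T$-partition of $\T$ lies below a \emph{unique} linear extension of $\T$.

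For the inclusion $\{g\mid g\leq f\}\subseteq\P_s(\T)$ when $f\in\L(\T)$, I would unwind the three defining conditions of $g\leq f$ (Definition \ref{defi17}) against the two conditions defining a linear extension (Definition \ref{defi15}): if $i\leq_\T j$ then $f(i)\leq f(j)$ forces $g(i)\leq g(j)$, and a short case check on the total order shows $\ell_2(g)=\ell_3(g)=0$, so $g\in\P_s(\T)$. The substantial step — and the main obstacle — is the existence and uniqueness of the linear extension $f$ attached to a given $g\in\P_s(\T)$. Here I would build $f$ by totally ordering the $\sim_\T$-classes: first by the (constant) value of $g$ on each class, and then, among classes sharing a common $g$-value $a$, by the position of their elements inside the totally ordered set of value-$a$ letters. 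The condition $\ell_3(g)=0$ is exactly what guarantees that the value-$a$ letters of distinct classes do not interleave, so that these classes occupy contiguous blocks and the secondary ordering is well defined; the condition $\ell_2(g)=0$ ensures this refinement still respects $<_\T$, so that $f$ is genuinely a linear extension, while the same two conditions force $g\leq f$. Uniqueness follows because both orderings are in fact imposed: if the $g$-values of two classes differ, their relative order under any $f$ with $g\leq f$ is dictated by the first condition of Definition \ref{defi17}, and if the values agree it is dictated by the second condition together with the block structure. This is precisely where the otherwise mysterious second strictness axiom of Definition \ref{defi13} earns its place, and it yields at once the set decomposition recorded in Corollary \ref{cor20}.

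Granting that the diagram commutes, the remaining assertions are formal. Since $L$ is surjective and is a Hopf morphism $(\h_\TT,.,\Delta)\to(\WQSym,\shuffle,\Delta)$ (Proposition \ref{prop16}) while $\Gamma_{(1,0,0)}$ is a Hopf morphism into $(\WQSym,.,\Delta)$ (Proposition \ref{prop14}), I would write any $x,y\in\WQSym$ as $x=L(a)$, $y=L(b)$; then $x\shuffle y=L(a)\shuffle L(b)=L(a.b)$, and $\varphi_{(1,0,0)}(x\shuffle y)=\varphi_{(1,0,0)}(L(a.b))=\Gamma_{(1,0,0)}(a.b)=\Gamma_{(1,0,0)}(a).\Gamma_{(1,0,0)}(b)=\varphi_{(1,0,0)}(x).\varphi_{(1,0,0)}(y)$. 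Dually, using $\Delta L=(L\otimes L)\Delta$ and the coassociativity side of Proposition \ref{prop14}, the same substitution shows that $\varphi_{(1,0,0)}$ intertwines the two copies of $\Delta$; hence $\varphi_{(1,0,0)}$ is a bialgebra, and so, being graded and connected, a Hopf algebra morphism. Finally, bijectivity is immediate from triangularity: the formula $\varphi_{(1,0,0)}(f)=f+\sum_{g<f}g$ is unitriangular with respect to the partial order of Definition \ref{defi17}, which is finite in each degree, so $\varphi_{(1,0,0)}$ restricts to an invertible endomorphism of each homogeneous component and is therefore an isomorphism.
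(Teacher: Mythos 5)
Your proposal is correct and follows essentially the same route as the paper: the same inclusion $\{g\mid g\leq f\}\subseteq\P_s(\T)$ for $f\in\L(\T)$, the same construction of the unique linear extension above a given strict $T$-partition by ordering the $\sim_\T$-classes first by $g$-value and then by position (with $\ell_3=0$ giving the block structure and $\ell_2=0$ giving compatibility with $<_\T$), and the same formal transport of the Hopf structure along the surjection $L$. The only divergence is cosmetic: you get bijectivity from unitriangularity of $\varphi_{(1,0,0)}$ with respect to the partial order on packed words, whereas the paper deduces surjectivity from that of $\Gamma_{(1,0,0)}$ and concludes by finite-dimensionality of the homogeneous components; both arguments are valid.
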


\begin{proof} Let $\T\in \TT_n$, $n \geq 0$. \\

{\it First step.} Let $f\in \L(\T)$ and $g\leq f$; let us prove that $g\in \P_s(\T)$. This is the generalization of lemma \ref{lemma18} to any finite topology.
\begin{enumerate}
\item If $i\leq_\T j$, then $f(i)\leq f(j)$, so $g(i) \leq g(j)$. 
\item If $i<_\T j$ and $j<i$, then $f(i)<f(j)$, so $g(i)<g(j)$. 
\item Let us assume that $i<j<k$, $i\sim_\T k$ and $g(i)=g(j)=g(k)$. If $i\notsim j$, then $f(i)\neq f(j)$. If $f(i)>f(j)$, as $g\leq f$,
we obtain $g(i)>g(j)$: this is a contradiction. So $f(i)<f(j)$. As $\sim_\T$ is an equivalence, $j\notsim k$, and we obtain in the same way
$f(j)<f(k)$. So $f(i)<f(k)$, and $i\notsim k$: this is a contradiction. As a consequence, $i\sim_\T j$ and $j\sim_\T k$.
\end{enumerate}
This proves that $g\in \P_s(\T)$. \\

{\it Second step.} Let us now consider an element $g\in \P_s(\T)$. We want to prove that there exists a unique $f\in \L(\T)$ such that $g\leq f$. 
For all $p\in [\max(g)]$, $g^{-1}(\{p\})$ is the union of equivalence classes of $\sim_\T$, and we put:
$$g^{-1}(\{p\})=C_{p,1}\sqcup\ldots \sqcup C_{p,k_p}.$$
Moreover, as $g\in P_s(\T)$, necessarily the $C_{p,r}$ are intervals. Hence, we assume that for all $p$:
$$C_{p,1}<\ldots <C_{p,k_p},$$
which means for all $r<s$, all the elements of $C_{p,r}$ are smaller than all the elements of $C_{p,s}$.

{\it Unicity.} Let us assume there exists $f \in \L(\T)$, such that $g\leq f$. 
The linear extension $f$ is constant on $C_{p,r}$: we put $f(C_{p,r})=\{c_{p,r}\}$. As $f\in \L(\T)$,
the $c_{p,r}$ are all distincts. 
\begin{enumerate}
\item If $r<s$ and $c_{p,r}>c_{p,s}$, choosing $i \in C_{p,r}$ and $j \in C_{p,s}$, 
then $i<j$ and $f(i)>f(j)$, so, as $g\leq f$, $p=g(i)>g(j)=p$: contradiction. So $c_{p,r}< c_{p,s}$.
\item If $p<q$ and $c_{q,s}<c_{p,r}$, choosing $i \in C_{p,r}$ and $j\in C_{q,s}$, then $f(i)\geq f(j)$,
which implies that $p=g(i)\geq g(j)=q$: contradiction. So $c_{p,r}<c_{q,s}$.
\end{enumerate}
We finally obtain:
$$c_{1,1}<\ldots<c_{1,k_1}<\ldots<c_{\max(g),1}<\ldots <c_{\max(g),k_{\max(g)}},$$
which entirely determines $f$: $f$ is unique.

{\it Existence.} Let us consider the packed word $f$ determined by:
$$f(i)=k_1+\ldots+k_{p-1}+r \mbox{ if }x\in C_{p,r}.$$
The values of $f$ on two different subsets $C_{p,r}$ are different, so $i \sim_\T j$ if, and only if, $f(i)=f(j)$.
If $i<_\T j$, assuming  $i \in C_{p,r}$ and $j\in C_{q,s}$, then $p=g(x)\leq g(y)=q$. When $p<q$, then $f(i)<f(j)$.
When $p=q$, if $j<i$ we should have, as $g\in \P_s(\T)$, $g(i)<g(j)$: contradiction. So $i<j$, and $r<s$, so $f(x)<f(y)$. Finally, $f\in \L(\T)$. 

If $f(i)=f(j)$, then $i$ and $j$ are in the same $C_{p,r}$, so $g(i)=g(j)$. 
If $f(i)\leq f(j)$, assuming that $i \in C_{p,r}$ and $j\in C_{q,s}$, then $p\leq q$, so $p=g(x)\leq g(y)=q$. 
If $f(i)<f(j)$ and $i>j$ then $p\leq q$; if $p=q$, then $r>s$, so $f(i)>f(j)$: contradiction. So $p<q$, and $p=g(i)<g(j)=q$. We obtain that $g\leq f$. \\

As a conclusion:
$$\varphi_{(1,0,0)}\circ L(\T)=\sum_{f\in \L(\T)}\sum_{g\leq f} g(1)\ldots g(n)=\sum_{g\in \P_s(\T)}g(1)\ldots g(n)=\Gamma_{(1,0,0)}(\T).$$
So $\varphi_{(1,0,0)}\circ L=\Gamma_{(1,0,0)}$. \\

{\it Third step.} Let $x,y\in \WQSym$. As $L$ is surjective, there exists $x',y'\in \h_\T$ such that $L(x')=x$ and $L(y')=y$. Then:
\begin{align*}
\varphi_{(1,0,0)}(x\shuffle y)&=\varphi_{(1,0,0)}(L(x')\shuffle L(y'))\\
&=\varphi_{(1,0,0)}\circ L(x'.y')\\
&=\Gamma_{(1,0,0)}(x'.y')\\
&=\Gamma_{(1,0,0)}(x').\Gamma_{(1,0,0)}(y')\\
&=\varphi_{(1,0,0)}\circ L(x').\varphi_{(1,0,0)}\circ L(y')\\
&=\varphi_{(1,0,0)}(x).\varphi_{(1,0,0)}(y).
\end{align*}
Moreover:
\begin{align*}
\Delta \circ \varphi_{(1,0,0)}(x)&=\Delta \circ \varphi_{(1,0,0)}\circ L(x')\\
&=\Delta\circ \Gamma_{(1,0,0)}(x')\\
&=(\Gamma_{(1,0,0)}\otimes \Gamma_{(1,0,0)})\circ \Delta(x')\\
&=(\varphi_{(1,0,0)}\otimes \varphi_{(1,0,0)})\circ (L\otimes L)\circ \Delta(x')\\
&=(\varphi_{(1,0,0)}\otimes \varphi_{(1,0,0)})\circ \Delta(L(x'))\\
&=(\varphi_{(1,0,0)}\otimes \varphi_{(1,0,0)})\circ \Delta(x).
\end{align*}
So $\varphi_{(1,0,0)}$ is a Hopf algebra morphism.\\

Let $x\in \WQSym$. As $\Gamma_{(1,0,0)}$ is surjective, there exists $x'\in \h_\TT$, such that $\Gamma_{(1,0,0)}(x')=x$.
Then $\varphi_{(1,0,0)}(L(x'))=x$: $\varphi_{(1,0,0)}$ is surjective. Since it is homogeneous and the homogeneous components of $\WQSym$
are finite dimensional, it is an isomorphism. \end{proof}

{\bf Examples.}
$$\varphi_{(1,0,0)}((123))=(123)+(122)+(112)+(111),$$
\begin{align*}
\varphi_{(1,0,0)}((132))&=(132)+(121),&\varphi_{(1,0,0)}((213))&=(213)+(212),\\
\varphi_{(1,0,0)}((231))&=(231)+(221),&\varphi_{(1,0,0)}((312))&=(312)+(211),\\
\varphi_{(1,0,0)}((321))&=(321),&\varphi_{(1,0,0)}((112))&=(112)+(111),\\
\varphi_{(1,0,0)}((121))&=(121),&\varphi_{(1,0,0)}((211))&=(211),\\
\varphi_{(1,0,0)}((122))&=(122)+(111),&\varphi_{(1,0,0)}((212))&=(212),\\
\varphi_{(1,0,0)}((221))&=(221),&\varphi_{(1,0,0)}((111))&=(111).
\end{align*}

\begin{cor} \label{cor20}
For any finite topology $\T$ of degree $n$:
$$\P_s(\T)=\bigsqcup_{f\in \L(\T)} \{g\mid g\leq f\}=\bigsqcup_{f\in \L(\T)} \P_s(\T_f).$$
\end{cor}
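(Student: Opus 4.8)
The plan is to deduce the corollary directly from Lemma \ref{lemma18} and the proof of Proposition \ref{prop19}, since everything needed is already established there. The statement packages two assertions: an identity of sets and the disjointness of the displayed union.

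First I would dispose of the second equality. By Lemma \ref{lemma18}, a packed word $g$ of length $n$ satisfies $g\leq f$ if, and only if, $g\in \P_s(\T_f)$; hence $\{g\mid g\leq f\}=\P_s(\T_f)$ for each $f\in \L(\T)$, and the second equality follows termwise. One small point to record is that the partial order $\leq$ of Definition \ref{defi17} is defined only between packed words of equal length, and that any $f\in \L(\T)$ together with any $g\leq f$ have length $n=\deg(\T)$; this is automatic here, so the expressions $\{g\mid g\leq f\}$ and $\P_s(\T_f)$ make sense and consist of packed words of length $n$.

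For the first equality I would reuse the two steps in the proof of Proposition \ref{prop19}, now read as statements about the strict $T$-partitions of an arbitrary finite topology $\T$. The first step shows that $f\in \L(\T)$ and $g\leq f$ together imply $g\in \P_s(\T)$, which yields the inclusion $\bigcup_{f\in \L(\T)}\{g\mid g\leq f\}\subseteq \P_s(\T)$. The second step shows that each $g\in \P_s(\T)$ admits a unique $f\in \L(\T)$ with $g\leq f$: its existence half provides the reverse inclusion $\P_s(\T)\subseteq \bigcup_{f\in \L(\T)}\{g\mid g\leq f\}$, while its uniqueness half forces the sets $\{g\mid g\leq f\}$ to be pairwise disjoint as $f$ ranges over $\L(\T)$. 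Assembling these gives $\P_s(\T)=\bigsqcup_{f\in \L(\T)}\{g\mid g\leq f\}$, and combining with the first paragraph completes the proof.

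I do not expect a genuine obstacle: the entire content is a repackaging of Proposition \ref{prop19}, whose second step does the real work of matching each strict $T$-partition $g$ to its unique linear extension $f$ by grouping the fibers $g^{-1}(\{p\})$ into their constituent $\sim_\T$-classes $C_{p,r}$ and ordering the induced intervals. The only genuine care needed is to verify that the correspondence there is a bijection between $\P_s(\T)$ and the family $\{(f,g): f\in \L(\T),\ g\leq f\}$, i.e.\ that existence and uniqueness are read off correctly; once this is granted, the disjoint-union formula is immediate.
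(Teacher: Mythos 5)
Your proposal is correct and follows essentially the same route as the paper, which simply refers back to the proof of Proposition \ref{prop19} (the paper cites only the ``first step'', though, as you rightly observe, both the first step and the existence--uniqueness argument of the second step are needed for the disjoint union). Your additional use of Lemma \ref{lemma18} to identify $\{g\mid g\leq f\}$ with $\P_s(\T_f)$ is exactly what the statement requires and is consistent with the paper's intent.
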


\begin{proof} This is the first step of the proof of proposition \ref{prop19}. \end{proof}

So $T$-partitions only depend on linear extensions. In the case of special posets, that is to say of $T_0$ topologies, 
the previous corollary is proved in \cite{Stanley,Gessel}. \\

{\bf Remark.}  Let $\varphi_{(0,1,0)}=j \circ \varphi_{(1,0,0)} \circ j$. Then:
$$\varphi_{(0,1,0)} \circ L=j \circ \varphi_{(1,0,0)} \circ j \circ L=j\circ \varphi_{(1,0,0)} \circ L \circ \iota
=j\circ \Gamma_{(1,0,0)} \circ \iota=\Gamma_{(0,1,0)}.$$
Moreover, for all packed word $f$:
$$\varphi_{(0,1,0)}(f)=\sum_{g\leq' f} g,$$
where the order relation $\leq'$ on packed words is defined by $g\leq' f$ if:
\begin{itemize}
\item For all $i,j \in [n]$, $f(i)\leq f(j)$ implies that $g(i)\leq g(j)$.
\item For all $i,j \in [n]$, $f(i)<f(j)$ and $i<j$ implies that $g(i)<g(j)$.
\item For all $i,j \in [n]$, $f(i)=f(j)$ implies that $g(i)=g(j)$.
\end{itemize} 

\begin{prop} \label{prop21}
Let $q\in \K^3$. There exists a linear endomorphism $\varphi_q$ of $\WQSym$ such that $\varphi_q\circ L=\Gamma_q$ if, and only if,
$q=(1,0,0)$ or $q=(0,1,0)$. 
\end{prop}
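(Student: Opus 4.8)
The plan is to translate the existence of $\varphi_q$ into a containment of kernels and then test it on explicit elements of $\ker L$. Since $L\colon\h_\TT\to\WQSym$ is a surjective linear map (proposition \ref{prop16}), a linear endomorphism $\varphi_q$ of $\WQSym$ with $\varphi_q\circ L=\Gamma_q$ exists if, and only if, $\ker L\subseteq\ker\Gamma_q$: if the kernels are nested one sets $\varphi_q(L(x))=\Gamma_q(x)$, which is then well defined (and $\varphi_q$ unique), and the converse is clear. The implication $q\in\{(1,0,0),(0,1,0)\}\Rightarrow\text{existence}$ is already available: $\varphi_{(1,0,0)}$ is constructed in proposition \ref{prop19} with $\varphi_{(1,0,0)}\circ L=\Gamma_{(1,0,0)}$, and the remark following corollary \ref{cor20} gives $\varphi_{(0,1,0)}=j\circ\varphi_{(1,0,0)}\circ j$ with $\varphi_{(0,1,0)}\circ L=\Gamma_{(0,1,0)}$. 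So only the forward implication remains.

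Assume now that $\varphi_q$ exists, so $\ker L\subseteq\ker\Gamma_q$. The elements I would feed into this containment are of \emph{splitting} type: for a topology having exactly two equivalence classes, the linear extensions of the incomparable topology are precisely those of the two comparable topologies (one for each ordering of the classes), so the incomparable one minus the two comparable ones lies in $\ker L$. Under $\Gamma_q$ every generalized $T$-partition taking two distinct values on the two classes occurs with the same weight in the incomparable topology and in exactly one of the comparable ones, hence cancels; only the all-equal word survives, with coefficient equal to its weight in the incomparable topology minus its weights in the two comparable ones. Carrying this out for four well-chosen class patterns (most $\Gamma_q$-values are already on the lists after propositions \ref{prop14} and \ref{prop16}) gives
\[
\tdun{$1$}\tdun{$2$}-\tddeux{$1$}{$2$}-\tddeux{$2$}{$1$}\ \longmapsto\ (1-q_1-q_2)(11),
\]
\[
\tdun{$1,2$}\hspace{3mm}\tdun{$3$}-\tddeux{$1,2$}{$3$}\hspace{3mm}-\tddeux{$3$}{$1,2$}\hspace{3mm}\longmapsto\ (1-q_1^2-q_2^2)(111),
\]
\[
\tdun{$1,2,3$}\hspace{5mm}\tdun{$4$}-\tddeux{$1,2,3$}{$4$}\hspace{5mm}-\tddeux{$4$}{$1,2,3$}\hspace{5mm}\longmapsto\ (1-q_1^3-q_2^3)(1111),
\]
\[
\tdun{$1,3$}\hspace{3mm}\tdun{$2$}-\tddeux{$1,3$}{$2$}\hspace{3mm}-\tddeux{$2$}{$1,3$}\hspace{3mm}\longmapsto\ q_3(1-2q_1q_2)(111),
\]
the last pattern using the \emph{non-interval} class $\{1,3\}$, which is what activates the statistic $\ell_3$ and brings in $q_3$. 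The containment therefore forces $q_1+q_2=1$, $q_1^2+q_2^2=1$, $q_1^3+q_2^3=1$ and $q_3(1-2q_1q_2)=0$.

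It remains to solve this system over an arbitrary field. Using $q_1+q_2=1$ one has $q_1^2+q_2^2=1-2q_1q_2$ and $q_1^3+q_2^3=1-3q_1q_2$, so the quadratic and cubic relations become $2q_1q_2=0$ and $3q_1q_2=0$. As no field has both $2=0$ and $3=0$, these force $q_1q_2=0$; with $q_1+q_2=1$ this gives $\{q_1,q_2\}=\{0,1\}$, and then $q_3(1-2q_1q_2)=q_3=0$, so $q=(1,0,0)$ or $(0,1,0)$. The step requiring the most care — and the reason the degree-$4$ witness cannot be dropped — is exactly this characteristic-independence: in characteristic $2$ the quadratic relation collapses to $(q_1+q_2)^2=1$ and is vacuous, so one genuinely needs the cubic relation to exclude spurious solutions such as $(q_1,q_2)=(\omega,\omega^2)$ over $\mathbb{F}_4$, while dually in characteristic $3$ the cubic relation is the vacuous one. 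The only other point to verify carefully is the clean-cancellation claim, i.e. that each splitting element is sent by $\Gamma_q$ to a multiple of the all-equal word alone; this is a short bookkeeping of $\ell_1,\ell_2,\ell_3$ on the generalized $T$-partitions that are constant on each of the two classes.
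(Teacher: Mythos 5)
Your proof is correct. The reduction of existence to $\ker L\subseteq \ker\Gamma_q$ is sound (using surjectivity of $L$ from proposition \ref{prop16}), the four splitting elements do lie in $\ker L$, their images under $\Gamma_q$ are exactly the multiples of the all-equal word you state, and the resulting system $q_1+q_2=1$, $2q_1q_2=0$, $3q_1q_2=0$, $q_3(1-2q_1q_2)=0$ forces $q=(1,0,0)$ or $(0,1,0)$ over any field. The paper's argument has the same skeleton --- necessity is extracted by evaluating both sides of $\varphi_q\circ L=\Gamma_q$ on explicit low-degree elements --- but it picks different witnesses: after pinning down $\varphi_q$ on $(123)$, $(213)$, $(312)$ via the three total orders, it tests $\tdun{$1$}\tddeux{$2$}{$3$}$, whose coefficient of $(111)$ gives the \emph{non-symmetric} relation $q_1=q_1(q_1^2+q_1q_2+q_2^2)$; combined with $q_1+q_2=1$ this yields $q_1^2q_2=0$ and hence $(q_1,q_2)\in\{(1,0),(0,1)\}$ in every characteristic, without leaving degree $3$. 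Your witnesses are symmetric in the two classes, so they can only produce the power sums $q_1^k+q_2^k=1$; that is precisely why you need the degree-$4$ element and the characteristic-$2$/characteristic-$3$ discussion, which you handle correctly (your $\mathbb{F}_4$ example $(\omega,\omega^2)$ is a genuine spurious solution of the first two equations). The trade-off is that your family of test elements is more uniform --- always a two-class antichain minus the two chains --- and the cancellation of the non-constant words is immediate. Both routes then kill $q_3$ with the non-interval class $\{1,3\}$, you via $q_3(1-2q_1q_2)=0$ and the paper via $\varphi_q((121))+\varphi_q((212))$ once $q_1q_2=0$ is known.
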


\begin{proof} $\Longrightarrow$. If $\varphi_q$ exists, then:
\begin{align*}
\varphi_q((123))&=\varphi_q \circ L(\tdtroisdeux{$1$}{$2$}{$3$})=\Gamma_q(\tdtroisdeux{$1$}{$2$}{$3$})=
(123)+q_1(112)+q_1(122)+q_1^3(111),\\
\varphi_q((213))&=\varphi_q \circ L(\tdtroisdeux{$2$}{$1$}{$3$})=\Gamma_q(\tdtroisdeux{$2$}{$1$}{$3$})=
(213)+q_2(112)+q_1(212)+q_1^2q_2(111),\\
\varphi_q((312))&=\varphi_q \circ L(\tdtroisdeux{$2$}{$3$}{$1$})=\Gamma_q(\tdtroisdeux{$2$}{$3$}{$1$})=
(312)+q_1(211)+q_2(212)+q_1q_2^2(111).
\end{align*}
Moreover:
\begin{align*}
\varphi_q\circ L(\tdun{$1$}\tddeux{$2$}{$3$})&=\varphi_q((123)+(213)+(312))\\
&=(123)+(213)+(312)+(q_1+q_2)(112)+q_1(211)+q_1(122)+(q_1+q_2)(212)\\
&+(q_1^3+q_1^2q_2+q_1q_2^2)(111)\\
&=\Gamma_q(\tdun{$1$}\tddeux{$2$}{$3$})\\
&=(123)+(213)+(312)+(112)+q_1(211)+q_1(122)+(212)+q_1(111).
\end{align*}
Identifying these two expressions, the coefficient of $(112)$ gives $q_1+q_2=1$; the coefficient of $(111)$ gives:
\begin{align*}
q_1=q_1^3+q_1^2q_2+q_2q_1^2&\Longleftrightarrow q_1(q_1^2+q_1q_2+q_2^2-1)=0\\
&\Longleftrightarrow q_1((q_1+q_2)^2-q_1q_2-1)=0\\
&\Longleftrightarrow q_1^2q_2=0.
\end{align*}
So $(q_1,q_2)=(1,0)$ or $(0,1)$. Moreover:
\begin{align*}
\varphi_q((121))&=\varphi_q\circ L(\tddeux{$1,3$}{$2$}\hspace{2mm})=\Gamma_q(\tddeux{$1,3$}{$2$}\hspace{2mm})=(121),\\
\varphi_q((212))&=\varphi_q\circ L(\tddeux{$2$}{$1,3$}\hspace{2mm})=\Gamma_q(\tddeux{$2$}{$1,3$}\hspace{2mm})=(212),
\end{align*}
and:
$$(121)+(212)=\varphi_q((121)+(212))=\varphi_q\circ L(\tdun{$1,3$}\hspace{3mm}\tdun{$2$})
=\Gamma_q(\tdun{$1,3$}\hspace{3mm}\tdun{$2$})=(121)+(212)+q_3(111).$$
The coefficient of $(111)$ gives $q_3=0$. So $q=(1,0,0)$ or $(0,1,0)$. \\

$\Longleftarrow$. This comes from proposition \ref{prop19}. \end{proof}

\subsection{Links with special posets}

If $\T\in \TT$ is $T_0$, then all its linear extensions are permutations. Seeing $\FQSym$ as a Hopf subalgebra of $(\WQSym,\shuffle ,\Delta)$, 
by restriction we obtain a Hopf algebra morphism $L:\h_\SP\longrightarrow \FQSym$,
 which is the morphism $L$ defined in section \ref{section1.2}.
 
If $\T\in \TT$ is not $T_0$, then no generalized T-partition of $\T$ is a permutation, so $\varpi \circ \Gamma_q(\T)=0$.
Hence, we have a commutative diagram of Hopf algebras:
$$\xymatrix{\h_\TT\ar@{->>}[d] _{\theta_0}\ar[r]^{\Gamma_q}&\WQSym\ar@{->>}[d]^{\varpi}\\
\h_\SP\ar[r]_L&\FQSym}$$

If $\T\in \TT$ is $T_0$, then a T-partition of $\T$ is a P-partition of the poset associated to $\T$, in Stanley's sense \cite{Stanley}, 
and we obtain the commutative diagram:
$$\xymatrix{&\FQSym
\ar[d]^{\theta_{(1,0,0)}}\\
\h_\SP\ar[r]_\Gamma \ar[ru]^L&\WQSym}$$

\subsection{The order on packed words}

\label{section4.5} Let us precise the properties of the order on packed words of definition \ref{defi17}. 
We shall use the following notion, which, in some sense, is the generalization of the notion of ascents of permutations.

\begin{defi}
Let $f$ be a packed word of length $n$, and let $i\in [n]$. We shall say that $i\in M(f)$ if:
\begin{itemize}
\item $f(i)<\max(f)$.
\item For all $j\in [n]$, $f(j)=f(i)$ $\Longrightarrow$  $j\leq i$.
\item For all $j\in [n]$, $f(j)=f(i)+1$ $\Longrightarrow$ $j>i$.
\end{itemize}\end{defi}

{\bf Example.} If $f=(412133)$, then $M(f)=\{3\}$. \\

{\bf Remark.} If $f$ is a permutation, then $i\in M(f)$ if, and only if, $f^{-1}(f(i)+1)>i$. \\

The aim of this section is to prove the following theorem:

\begin{theo}\label{theo23}
\begin{enumerate}
\item Let $f,g$ be two packed words. Then $f\leq g$ if, and only if, $Std(f)=Std(g)$ and $M(f)\subseteq M(g)$. 
\item For all $n \geq 1$, there is an isomorphism of posets:
$$\Phi:\left\{\begin{array}{rcl}
(\{\mbox{packed words of length $n$}\},\leq)&\longrightarrow&\displaystyle \bigsqcup_{\sigma \in \mathfrak{S}_n} 
(\{\mbox{subsets of $M(\sigma)$ }\},\subseteq)\\
f&\longrightarrow&M(f)\subseteq M(Std(f)).
\end{array}\right.$$
Hence, for all $n\geq 1$, the poset of packed words of length $n$ is a disjoint union of posets, indexed by $\S_n$, the part indexed by $\sigma$
being isomorphic to the poset of subsets of $M(\sigma)$, partially ordered by the inclusion.
\end{enumerate}\end{theo}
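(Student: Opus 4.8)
The plan is to reduce the whole statement to a single combinatorial description of $M$ in terms of standardization. Throughout, write $\sigma=Std(f)$ and, for a permutation $\sigma\in\S_n$, set $A(\sigma)=\{k\in[n-1]\mid \sigma^{-1}(k)<\sigma^{-1}(k+1)\}$. First I would record the elementary property that for $i<j$ one has $Std(h)(i)<Std(h)(j)$ if and only if $h(i)\leq h(j)$ (equal $h$-values are broken by position in $Std$). Reading $h$ along the order $\sigma^{-1}(1),\ldots,\sigma^{-1}(n)$ then gives a weakly increasing surjective sequence $v^h_k=h(\sigma^{-1}(k))$, and on each maximal block where $v^h$ is constant the positions $\sigma^{-1}(k)$ are themselves increasing. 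This identifies the packed words $h$ with $Std(h)=\sigma$ with the subsets $S_h=\{k\mid v^h_k=v^h_{k+1}\}$ of $A(\sigma)$ (a merge is possible only at an ascent of $\sigma^{-1}$), so that $h\mapsto S_h$ is a bijection onto the subsets of $A(\sigma)$.

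The key step is the formula $\sigma(M(h))=A(\sigma)\setminus S_h$. Using the block observation, $i=\sigma^{-1}(k)$ is the rightmost position carrying its $h$-value exactly when $k$ ends its block, i.e. $k\notin S_h$; and then the smallest position of the next value is $\sigma^{-1}(k+1)$, which lies to the right of $i$ exactly when $k\in A(\sigma)$. Comparing with the permutation case (the remark that for a permutation $\sigma$ one has $i\in M(\sigma)$ iff $\sigma^{-1}(\sigma(i)+1)>i$, whence $M(\sigma)=\sigma^{-1}(A(\sigma))$ and $\sigma$ restricts to a bijection $M(\sigma)\to A(\sigma)$), this yields at once that $M(h)\subseteq M(\sigma)=M(Std(h))$, so $\Phi$ is well defined; and that for fixed $\sigma$ the map $h\mapsto M(h)$ is the composite of $h\mapsto S_h$ with the bijection $S\mapsto\sigma^{-1}(A(\sigma)\setminus S)$ from subsets of $A(\sigma)$ onto subsets of $M(\sigma)$. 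Hence $\Phi$ restricts to a bijection on each standardization class, which is the bijective content of part (2).

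For part (1) I would treat the two implications separately. If $f\leq g$, then the first and third conditions of Definition \ref{defi17} say that $f$ is constant on the level sets of $g$ and nondecreasing along them, i.e. $f=\psi\circ g$ for a weakly increasing surjection $\psi$ (so $\psi(a+1)-\psi(a)\in\{0,1\}$); the standardization property applied to the three conditions gives $Std(f)=Std(g)$; and a short check shows $M(f)\subseteq M(g)$, the only nontrivial point being that if $i\in M(f)$ and some $j<i$ had $g(j)=g(i)+1$, then the second condition would force $f(j)>f(i)$, hence $f(j)=f(i)+1$ by the step-$\leq 1$ property of $\psi$, contradicting $i\in M(f)$. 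Conversely, if $Std(f)=Std(g)=\sigma$ and $M(f)\subseteq M(g)$, the key formula gives $S_g\subseteq S_f$; this containment of merge sets forces every $g$-block into an $f$-block, yielding the first and third conditions, while the second condition is exactly where $S_f\subseteq A(\sigma)$ is used: if $i<j$, $g(i)>g(j)$ and yet $f(i)=f(j)$, then all ranks between $\sigma(j)$ and $\sigma(i)$ would be merges of $f$, hence lie in $A(\sigma)$, forcing $\sigma^{-1}(\sigma(j))<\sigma^{-1}(\sigma(i))$, that is $j<i$, a contradiction.

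Part (2) then follows by assembling these: $\Phi$ is a bijection by the second paragraph, and part (1) says precisely that $f\leq g$ holds if and only if $f,g$ lie in the same fibre over $\S_n$ and $M(f)\subseteq M(g)$, so $\Phi$ is an isomorphism of posets onto $\bigsqcup_{\sigma\in\S_n}(\{\text{subsets of }M(\sigma)\},\subseteq)$. I expect the main obstacle to be the key formula together with the second condition in the reverse implication of part (1): the block-structure observation must be set up carefully, and the ``no merge across a descent of $\sigma^{-1}$'' phenomenon — which is the combinatorial shadow of the strictness condition in Definition \ref{defi13} — is exactly what makes the descent obstruction go through.
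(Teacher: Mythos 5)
Your proof is correct, and its overall strategy is the same as the paper's: fibre the packed words over their standardization, identify each fibre with the Boolean lattice of subsets of $M(\sigma)$, and check that the order of definition \ref{defi17} corresponds to inclusion of the sets $M(\cdot)$. The execution differs in two places, both worth noting. Where the paper (lemma \ref{lemma25}) constructs an inverse $\psi_\sigma$ by induction along $\sigma^{-1}(1),\ldots,\sigma^{-1}(n)$ and then verifies $\phi_\sigma\circ\psi_\sigma=\mathrm{id}$ and $\psi_\sigma\circ\phi_\sigma=\mathrm{id}$ separately, you transport everything to rank space: the merge set $S_h\subseteq A(\sigma)$ is visibly a free parameter, and the closed formula $\sigma(M(h))=A(\sigma)\setminus S_h$ replaces the two inductive verifications by a one-line complementation (your parametrization agrees with the paper's, since $\psi_\sigma(I)$ is exactly the word with merge set $A(\sigma)\setminus\sigma(I)$). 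Second, for the implication $f\leq g\Rightarrow M(f)\subseteq M(g)$ the paper again argues through the explicit words $\psi_\sigma(I)$, $\psi_\sigma(J)$ and their level sets, whereas you argue directly from the three axioms of the order via the factorization $f=\psi\circ g$ with $\psi$ a weakly increasing surjection of step at most $1$; this is more elementary and does not use the classification of the fibre at all. The point you flag as delicate --- that a merge can only occur at an ascent of $\sigma^{-1}$, which is what makes the second (descent) condition survive in the reverse implication --- is exactly where the paper also spends its effort, and your chaining argument $\sigma^{-1}(\sigma(j))<\cdots<\sigma^{-1}(\sigma(i))$ handles it correctly. The only steps left implicit (that $g(i)<\max(g)$ and that no $j>i$ shares the value $g(i)$, in the verification of $i\in M(g)$) are immediate from the same factorization, so there is no gap.
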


\begin{lemma} \label{lemma24}\begin{enumerate}
\item For any packed word $g$, $g\leq Std(g)$.
\item Let $f,g$ be packed words. If $f\leq g$, then $Std(f)=Std(g)$.
\end{enumerate}\end{lemma}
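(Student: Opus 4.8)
The plan is to encode standardization as a total order on positions and to show that the order $\leq$ on packed words interacts cleanly with that encoding. For a packed word $f$ of length $n$, define a total order $\prec_f$ on $[n]$ by declaring $i\prec_f j$ whenever $f(i)<f(j)$, or $f(i)=f(j)$ and $i<j$. The two defining implications of $Std(f)$ say precisely that $Std(f)(i)<Std(f)(j)$ if and only if $i\prec_f j$; hence $Std(f)$ is the rank function of $\prec_f$, and for two packed words of the same length one has $Std(f)=Std(g)$ if and only if $\prec_f=\prec_g$. Throughout I use the three clauses of Definition \ref{defi17}, which for the relation $f\leq g$ (so $f$ is the smaller word, $g$ the larger) read: (i) $g(i)\leq g(j)\Rightarrow f(i)\leq f(j)$; (ii) $g(i)>g(j)$ and $i<j\Rightarrow f(i)>f(j)$; (iii) $g(i)=g(j)\Rightarrow f(i)=f(j)$.

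For the first assertion I would simply verify the three clauses of Definition \ref{defi17} for $g\leq Std(g)$, taking the larger word to be the permutation $\sigma=Std(g)$. If $\sigma(i)\leq\sigma(j)$ with $i\neq j$, then $\sigma(i)<\sigma(j)$, hence $i\prec_g j$ and so $g(i)\leq g(j)$, giving clause (i). If $\sigma(i)>\sigma(j)$ and $i<j$, then $j\prec_g i$; since the index tie-break $j<i$ is false, this forces $g(j)<g(i)$, giving clause (ii). Finally $\sigma(i)=\sigma(j)$ forces $i=j$ because $\sigma$ is a bijection, so clause (iii) is immediate. Hence $g\leq Std(g)$.

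For the second assertion I would prove $\prec_f=\prec_g$. Since both are total orders on the finite set $[n]$, it suffices to establish the single implication $i\prec_g j\Rightarrow i\prec_f j$ for $i\neq j$: totality of $\prec_g$ then upgrades this to an equality of orders, because if $j\prec_g i$ instead, the same implication yields $j\prec_f i$. So assume $i\prec_g j$. If $g(i)<g(j)$, clause (i) gives $f(i)\leq f(j)$, while $g(i)\neq g(j)$ together with the contrapositive of clause (iii) gives $f(i)\neq f(j)$; hence $f(i)<f(j)$ and $i\prec_f j$. If instead $g(i)=g(j)$ and $i<j$, then clause (iii) gives $f(i)=f(j)$, and the tie-break $i<j$ yields $i\prec_f j$. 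In both cases $i\prec_f j$, so $\prec_f=\prec_g$ and $Std(f)=Std(g)$.

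The only step needing care is the subcase $g(i)<g(j)$ of the second assertion: clause (i) alone yields merely $f(i)\leq f(j)$, and one must invoke clause (iii) in contrapositive form to promote this to a strict inequality, so that the conclusion $i\prec_f j$ holds independently of the relative position of $i$ and $j$. It is worth noting that clause (ii) of the order is not used in either assertion; it will only become relevant for the full characterization in Theorem \ref{theo23}.
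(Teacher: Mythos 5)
Your framework (encoding $Std(f)$ as the rank function of the total order $\prec_f$) is sound, and your proof of the first assertion is correct and essentially a rephrasing of the paper's direct verification. The second assertion, however, contains a genuine logical error in the subcase $g(i)<g(j)$. You claim that ``$g(i)\neq g(j)$ together with the contrapositive of clause (iii) gives $f(i)\neq f(j)$.'' The contrapositive of clause (iii) is $f(i)\neq f(j)\Rightarrow g(i)\neq g(j)$; what you are invoking is its converse, which is not part of the hypothesis. Indeed the intermediate claim $f(i)<f(j)$ is simply false in general: take $f=(11)\leq g=(12)$ with $i=1$, $j=2$; then $g(1)<g(2)$ but $f(1)=f(2)$. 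In that example the desired conclusion $1\prec_f 2$ still holds via the tie-break, but your derivation of it does not.

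The gap is not cosmetic, because your closing remark that clause (ii) is never used is wrong for assertion~2, and without clause (ii) the statement is actually false: the word $(11)$ satisfies clauses (i) and (iii) relative to $(21)$, yet $Std((11))=(12)\neq(21)$. The correct repair of your subcase is: from clause (i), $f(i)\leq f(j)$; if $f(i)<f(j)$ you are done; if $f(i)=f(j)$, suppose $j<i$ and apply clause (ii) to the pair $(j,i)$ --- since $g(j)>g(i)$ and $j<i$, it forces $f(j)>f(i)$, a contradiction --- so $i<j$ and the tie-break yields $i\prec_f j$. This is exactly where the paper's proof also uses clause (ii) (to show that equal-valued blocks $C_p$, $C_q$ of $g$ on which $f$ takes the same value must appear in increasing position order). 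With that repair your argument goes through and is a clean, slightly more streamlined alternative to the paper's level-set computation.
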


\begin{proof} We put $\sigma=Std(f)$, $\tau=Std(g)$ and, for all $p\in [\max(g)]$, $g^{-1}(\{p\})=C_p$. 

1. Let $i,j \in [n]$.  We assume that $i\in C_p$ and $j\in C_q$. 
If $\tau(i)\leq \tau(j)$, by definition of the standardization, $p\leq q$, so $g(i)=p\leq q=g(j)$. 
If $\tau(i)=\tau(j)$, as $\tau$ is a permutation, $i=j$, and $g(i)=g(j)$. If $\tau(i)<\tau(j)$ and $j>i$, then $p\leq q$.
As $\tau$ is increasing on $C_p$ by definition of the standardization, $p=q$ is impossible. So $p<q$, and $g(i)=p<q=g(j)$.
We obtain $g\leq \tau$. \\

2. As $f\leq g$, $f$ is constant on $C_p$ for all $p$. We put $f(C_p)=\{c_p\}$. If $i\in C_p$ and $j\in C_q$, with $p<q$,
then $g(i)=p\leq q=g(j)$, so $f(i)\leq f(j)$: $c_p\leq c_q$. 
If $c_p=c_q$ and $p<q$, let $i\in C_p$ and $j\in C_q$. If $j<i$, as $g(i)=p<q=g(j)$ and $f\leq g$, $c_p=f(i)<f(j)=c_q$: contradiction.
Hence, if $c_p=c_q$, for all $i\in C_p$, for all $j\in C_q$, $i<j$, which is shortly denoted by $C_p<C_q$. 

As $f$ is constant on $C_p$, $\sigma$ is increasing on $C_p$.
If $p<q$ and $c_p\neq c_q$, then $c_p<c_q$. By definition of the standardization, for all $i\in C_p$, $j\in C_q$,
$\sigma(i)<\sigma(j)$. If $p<q$ and $c_p=c_q$, then for all $i\in C_p$, $j\in C_q$, $i<j$. As $f$ is constant on $C_p\sqcup C_q$,
$\sigma(i)<\sigma(j)$. Finally:
\begin{itemize}
\item $\sigma$ is increasing on $C_p$ for all $p$.
\item If $p<q$, $i\in C_p$ and $j\in C_q$, $\sigma(i)<\sigma(j)$.
\end{itemize}
So $\sigma=Std(g)$. \end{proof}

\begin{lemma} \label{lemma25}
Let $\sigma \in \mathfrak{S}_n$, $n\geq 1$. The following map is bijective:
$$\phi_\sigma:\left\{\begin{array}{rcl}
\{\mbox{$f$ packed word}\mid Std(f)=\sigma\}&\longrightarrow&\{I\mid I\subseteq M(\sigma)\}\\
f&\longrightarrow&M(f).
\end{array}\right.$$
\end{lemma}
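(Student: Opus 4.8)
The plan is to parametrize the packed words $f$ with $Std(f)=\sigma$ by a purely combinatorial datum attached to $\sigma$, to read off $M(f)$ in terms of that datum, and to observe that $\phi_\sigma=M$ is then just a relabelling of subsets. Write $A_\sigma=\{a\in[n-1]\mid \sigma^{-1}(a)<\sigma^{-1}(a+1)\}$. By the remark following the definition of $M$, one has $M(\sigma)=\{\sigma^{-1}(a)\mid a\in A_\sigma\}$, and $a\mapsto\sigma^{-1}(a)$ is a bijection from $A_\sigma$ onto $M(\sigma)$. First I would read each $f$ with $Std(f)=\sigma$ along $\sigma$, forming the word $w$ with $w_a=f(\sigma^{-1}(a))$ for $a\in[n]$. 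Because sorting positions by their standardization index is exactly sorting by (value, then position), $w$ is non-decreasing and surjective onto some $[m]$, with $w_1=1$; and the tie-breaking clause of $Std$ forces $w_a=w_{a+1}\Rightarrow \sigma^{-1}(a)<\sigma^{-1}(a+1)$, i.e. the flat set $F(f)=\{a\in[n-1]\mid w_a=w_{a+1}\}$ satisfies $F(f)\subseteq A_\sigma$.

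Next I would show $f\mapsto F(f)$ is a bijection onto the subsets of $A_\sigma$. Injectivity is immediate: $w$ is recovered from $F(f)$ (start at $1$, repeat on $F(f)$, jump by one elsewhere), and $f(p)=w_{\sigma(p)}$. For surjectivity, given any $F\subseteq A_\sigma$ I would build $w$ as above, set $f(p)=w_{\sigma(p)}$, and check $Std(f)=\sigma$ and $F(f)=F$. The condition $f(i)<f(j)\Rightarrow\sigma(i)<\sigma(j)$ is clear from $w$ non-decreasing; the tie clause $f(i)=f(j),\ i<j\Rightarrow\sigma(i)<\sigma(j)$ reduces to showing that along a maximal constant-value run $\{b_0,\dots,b_1\}$ of $w$ the map $\sigma^{-1}$ is increasing, which follows by chaining $\sigma^{-1}(b)<\sigma^{-1}(b+1)$ over $b\in\{b_0,\dots,b_1-1\}\subseteq F\subseteq A_\sigma$.

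It then remains to compute $M(f)$ from $F(f)$. Fix $i$ and put $a=\sigma(i)$. Since $\sigma^{-1}$ increases along the constant-value run through $a$, the ``last occurrence'' clause of $M$ (every $j$ with $f(j)=f(i)$ satisfies $j\le i$) holds precisely when $a$ is the top of its run, that is a jump $a\notin F(f)$ with $a\le n-1$; granting this, $f(i)<\max(f)$ is automatic because $w_a<w_{a+1}\le\max(w)$, and the ``next value'' clause (every $j$ with $f(j)=f(i)+1$ satisfies $j>i$) becomes exactly $\sigma^{-1}(a+1)>\sigma^{-1}(a)$, i.e. $a\in A_\sigma$. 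Hence $M(f)=\{\sigma^{-1}(a)\mid a\in A_\sigma\setminus F(f)\}$. Composing the bijection $f\mapsto F(f)$ onto subsets of $A_\sigma$ with complementation inside $A_\sigma$ and with the bijection $\sigma^{-1}\colon A_\sigma\to M(\sigma)$ exhibits $\phi_\sigma=M$ as a bijection onto the subsets of $M(\sigma)$, proving the lemma.

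The main obstacle is the bookkeeping in the $M$-computation: the three clauses defining $M(f)$ mix position order with value order, and the argument only closes because flats lie in $A_\sigma$, which is exactly what makes $\sigma^{-1}$ monotone on each constant-value run. Keeping this monotonicity visible throughout, rather than any deeper idea, is the crux; a minor point to handle is the boundary value $a=n$, which is ruled out precisely by the clause $f(i)<\max(f)$.
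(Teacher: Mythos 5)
Your proof is correct. It rests on the same combinatorial construction as the paper's --- a packed word with standardization $\sigma$ is obtained by reading positions in the order $\sigma^{-1}(1),\ldots,\sigma^{-1}(n)$ and deciding at each step whether to repeat or to increment the value --- but you organize the argument differently. The paper defines the inverse map $\psi_\sigma$ directly on subsets $I\subseteq M(\sigma)$ (increment at step $i$ unless $\sigma^{-1}(i)\in M(\sigma)\setminus I$) and then verifies $\phi_\sigma\circ\psi_\sigma=\mathrm{id}$ and $\psi_\sigma\circ\phi_\sigma=\mathrm{id}$ by two separate inductive checks. You instead factor $\phi_\sigma$ through the flat-set parametrization $f\mapsto F(f)\subseteq A_\sigma$, which is the transparent bijection (a nondecreasing surjection onto an initial segment is determined by where it repeats), then establish the single identity $M(f)=\sigma^{-1}\left(A_\sigma\setminus F(f)\right)$, and conclude by composing three bijections. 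This makes explicit the complementation that is hidden inside the paper's definition of $\psi_\sigma$, and replaces the two composition checks by one direct computation; the cost is the bookkeeping needed to translate the three clauses defining $M(f)$ into run data, which you carry out correctly (the key point being that $F(f)\subseteq A_\sigma$ forces $\sigma^{-1}$ to be increasing on each constant run), including the boundary case $\sigma(i)=n$, which is indeed excluded by the clause $f(i)<\max(f)$ rather than by the last-occurrence clause.
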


\begin{proof} Let $f$ be a packed word such that $Std(f)=\sigma$.  We put $f^{-1}(\{p\})=C_p$ for all $p\in [\max(f)]$.
Let $i \in M(f)$. Assume that $i\in C_p$. Then $p<\max(f)$, $i$ is the greatest element of $C_p$, and, if $j$ is the smallest element of $C_{p+1}$,
$i<j$.  By definition of the standardization, $\sigma(j)=\sigma(i)+1$. As $j>i$, $i\in M(\sigma)$, so $M(f)\subseteq M(\sigma)$,
and $\phi_\sigma$ is well-defined. \\

We define a map $\psi_\sigma:\{I\mid I\subseteq M(\sigma)\}\longrightarrow \{\mbox{$f$ packed word}\mid Std(f)=\sigma\}$
in the following way. If $I \subseteq M(\sigma)$, we define $f(\sigma^{-1}(i))$ by induction: $f(\sigma^{-1}(1))=1$, and, for all $i\in[n-1]$:
\begin{itemize}
\item If $\sigma^{-1}(i)\in I$ or if $\sigma^{-1}(i)\notin M(\sigma)$, then $f(\sigma^{-1}(i+1))=f(\sigma^{-1}(i))+1$.
\item If $\sigma^{-1}(i)\in M(\sigma)\setminus I$, $f(\sigma^{-1}(i+1))=f(\sigma^{-1}(i))$.
\end{itemize}
Clearly, $f$ is a packed word. Let us prove that $Std(f)=\sigma$. For all $p\in [\max(f)]$, we put $f^{-1}(\{p\})=C_p$. 
By definition of $f$, for all $p$, there exist $i_p\leq j_p$ such that $C_p=\sigma^{-1}(\{i_p,\ldots,j_p\})$,
and $\sigma^{-1}(i_p),\ldots,\sigma^{-1}(j_p-1)\in M(\sigma)$, which implies:
$$\sigma^{-1}(i_p)<\sigma^{-1}(i_p+1)<\ldots<\sigma^{-1}(j_p-1)<\sigma^{-1}(j_p).$$
We obtain that $\sigma^{-1}$ is increasing on $i_p,\ldots,j_p$, so $\sigma$ is increasing on $C_p$. 
Moreover, if $p<q$, $i\in C_p$ and $j\in C_q$, by definition of $f$, putting $\sigma^{-1}(i)=k$ and $\sigma^{-i}(j)=l$, $k<l$. 
Consequently, $\sigma(i)=k<l=\sigma(j)$. We obtain that $Std(f)=\sigma$. We can put $\psi_\sigma(I)=f$,
and then $\psi_\sigma$ is a well-defined map. \\

Let $I\subseteq M(\sigma)$, and $f=\psi_\sigma(I)$. For all $p\in [\max(f)]$, we put $f^{-1}(\{p\})=C_p$. 
If $i\in M(f)$, then $i$ is the greatest element of $C_p$ with $p=f(i)<\max(f)$, and if $j$ is the smallest element of $C_{p+1}$, then $i<j$. 
As $\sigma=Std(f)$, $\sigma(j)=\sigma(i)+1$, so $i\in M(\sigma)$, and $M(f) \subseteq M(\sigma)$. 
By definition of $f$, $i\in I$ or $i\notin M(\sigma)$, so $i\in I$: $M(f)\subseteq I$. Let $i\in I$. We put $k=\sigma(i)$.
Then $f(\sigma^{-1}(k+1))=f(\sigma^{-1}(k))+1$. By definition of $f$, $i$ is the greatest element of $C_p$, with $p=f(i)$ 
and $j=\sigma^{-1}(k+1)$ is the smallest element of $C_{p+1}$. As $\sigma^{-1}(k)=i\in M(\sigma)$,
$\sigma^{-1}(k+1)=j>\sigma^{-1}(k)=i$: $i\in M(f)$. We obtain $M(f)=I$, that is to say $\phi_\sigma\circ \psi_\sigma(I)=I$. \\

Let $f$ be a packed word such that $Std(f)=\sigma$, $I=M(f)$ and $g=\psi_\sigma(I)$. 
For all $p\in [\max(f)]$, we put $f^{-1}(\{p\})=C_p$. Let us prove that $f(\sigma^{-1}(i))=g(\sigma^{-1}(i))$ for all $i$ by induction. 
If $i=1$, as $\sigma=Std(f)$, $f(\sigma^{-1}(1))=1=g(\sigma^{-1}(1))$. Let us assume that $f(\sigma^{-1}(i))=g(\sigma^{-1}(i))$. 
We obtain three different cases.
\begin{enumerate}
\item If $\sigma^{-1}(i)\in I$, then $\sigma^{-1}(i)$ is the greatest element of $C_p$,
with $p=f(\sigma^{-1}(i))$, and if $j$ is the smallest element of $C_{p+1}$, then $i<j$. As $Std(f)=\sigma$,
$j=\sigma^{-1}(\sigma(\sigma^{-1}(i))+1)=\sigma^{-1}(i+1)$, and $f(\sigma^{-1}(i+1))=p+1=
f(\sigma^{-1}(i))+1=g(\sigma^{-1}(i+1))$.
\item If $\sigma^{-1}(i)\notin M(\sigma)$, then $\sigma^{-1}(i+1)<\sigma^{-1}(i)$. As $\sigma=Std(f)$, 
necessarily $\sigma^{-1}(i)$ is the greatest element of $C_p$ and $\sigma^{-1}(i+1)$ is the smallest element of $C_{p+1}$.
We obtain $f(\sigma^{-1}(i+1))=p+1=f(\sigma^{-1}(i))+1=g(\sigma^{-1}(i+1))$.
\item If $\sigma^{-1}(i)\in M(\sigma)\setminus I$, then $\sigma^{-1}(i)<\sigma^{-1}(i+1)$.
As $\sigma=Std(f)$ and $i\notin I$, $\sigma^{-1}(i)$ and $\sigma^{-1}(i+1)$ are in the same $C_p$,
so $f(\sigma^{-1}(i+1))=p=f(\sigma^{-1}(i))=g(\sigma^{-1}(i+1))$.
\end{enumerate}
As a conclusion, $g=f$, so $\psi_\sigma \circ \phi_\sigma(f)=f$. \end{proof}

\begin{proof} (theorem \ref{theo23}).

1. $\Longrightarrow$. If $f\leq g$, by lemma \ref{lemma24}, $Std(f)=Std(g)$. We denote by $\sigma$ this permutation. 
If $I=M(f)$ and $J=M(g)$, then $f=\psi_\sigma(I)$ and $g=\psi_\sigma(J)$. For all $p\in [\max(f)]$, we put $f^{-1}(\{p\})=C_p$.
For all $q\in [\max(g)]$, we put $g^{-1}(\{q\})=C'_q$. \\

Let $k\in I$. We put $\sigma(k)=i$. By construction of $\psi_\sigma(I)$, $k=\sigma^{-1}(i)$ is the greatest letter of $C_p$
for $p=f(k)$, and if $l=\sigma^{-1}(i+1)$ is the smallest letter of $C_{p+1}$, then $k<l$. 
Consequently, if $k'\in C_p$, $l'\in C_{p+1}$, then $k'\leq k<l\leq l'$. If $g(k')\geq g(l')$, as $f\leq g$, we should have $f(k')>f(l')$:
this is a contradiction, as $f(k')=p$ and $f(l')=p+1$. So $g(k')<g(l')$. 
Moreover, $f$ is constant on $C'_q$ for all $q$, as $f\leq g$. If $k\in C'_q$, then $C'_q\subseteq C_p$ with $p=f(k)$.
Moreover, $l\in C_{p+1}$, so $l\notin C'_q$. As $Std(g)=\sigma$, $l=\sigma^{-1}(i+1) \in C'_{q+1}$, which implies $C'_{q+1} \subseteq C_{p+1}$.
So for all $k'\in C_q$, $l'\in C_{q+1}$, $k'<l'$: $k\in M(g)=J$, and $I\subseteq J$. \\

1. $\Longleftarrow$. We put $I=M(f)$, $J=M(g)$, such that $f=\psi_\sigma(I)$ and $g=\psi_\sigma(J)$, with $\sigma=Std(f)=Std(g)$.
\begin{itemize}
\item As $I\subseteq J$, the change of values of $f$ in the definition of $\psi_\sigma(I)$ are also change of values of $g$ in the definition of
$\psi_\sigma(J)$; consequently, if $g(i)=g(j)$, then $f(i)=f(j)$. 
\item If $g(k)\leq g(l)$, we put $\sigma(k)=i$ and $\sigma(l)=j$. By construction of $\psi_\sigma(J)$, $i<j$.
By construction of $\psi_\sigma(I)$, $f(k)=f(\sigma^{-1}(i))=\leq f(\sigma^{-1}(j))=f(l)$.
\item If $g(k)<g(l)$ and $k>l$, we put $\sigma(k)=i$ and $\sigma(l)=j$. Then the interval $\{i,\ldots, j-1\}$
contains an element which does not belong to $M(\sigma)$ (otherwise, it would contain only elements of $M(\sigma)$, and then $k\leq l$). 
By definition of $\psi_\sigma(I),$ $f(k)<f(l)$.
\end{itemize}
Finally, $f\leq g$.\\

2. For all $\sigma\in \mathfrak{S}_n$, $\phi_\sigma$ is bijective: this implies that $\Phi$ is bijective. 
By the first point, $\Phi$ is an isomorphism of posets.  \end{proof}

{\bf Remark.} In particular, if $\sigma$ is a permutation, $\varphi_{(1,0,0)}(\sigma)$ is the sum of all packed words $f$ such that $Pack(f)=\sigma$.
This implies that the restriction of $\varphi_{(1,0,0)}$ to $\FQSym$ is the map $\varphi$ defined in section \ref{section1.2}.

\begin{cor}\label{cor26}
For all $n \geq 1$, for all $\sigma \in \S_n$:
$$\sharp\{\mbox{$w$ packed word of length }n\mid Std(w)=\sigma\}=2^{\sharp M(\sigma)}.$$
For all $n \geq 1$:
$$\sharp\{\mbox{packed words of length }n\}=\sum_{\sigma\in \mathfrak{S}_n} 2^{\sharp M(\sigma)}.$$
\end{cor}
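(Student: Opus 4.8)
The plan is to read off both identities directly from the bijection established in Lemma \ref{lemma25}, since all of the combinatorial substance already lies there. For the first identity, I would fix $\sigma \in \S_n$ and recall that $\phi_\sigma$ sends each packed word $f$ with $Std(f)=\sigma$ to the subset $M(f)\subseteq M(\sigma)$, and that Lemma \ref{lemma25} asserts this map is a bijection onto the full power set of $M(\sigma)$. A finite set with $\sharp M(\sigma)$ elements has exactly $2^{\sharp M(\sigma)}$ subsets, so the fiber $\{w\mid Std(w)=\sigma\}$ has cardinality $2^{\sharp M(\sigma)}$, which is precisely the first claim. Equivalently, one may invoke the poset isomorphism $\Phi$ of Theorem \ref{theo23}: its restriction to the component indexed by $\sigma$ is, in particular, a cardinality-preserving bijection between the packed words standardizing to $\sigma$ and the subsets of $M(\sigma)$.

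For the second identity, I would observe that $Std$ is a well-defined map from the set of packed words of length $n$ to $\S_n$, so its fibers $\{w\mid Std(w)=\sigma\}$, indexed by $\sigma \in \S_n$, are pairwise disjoint and together cover every packed word of length $n$. Summing the first identity over $\sigma \in \S_n$ then yields
$$\sharp\{\mbox{packed words of length }n\}=\sum_{\sigma \in \S_n}\sharp\{w\mid Std(w)=\sigma\}=\sum_{\sigma \in \S_n}2^{\sharp M(\sigma)}.$$

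There is no genuine obstacle in this argument: the corollary is a direct cardinality count built on top of Lemma \ref{lemma25}. The only point meriting a remark is that the fibers of $Std$ really do partition the packed words of length $n$, which is immediate since $Std$ is a function taking values in $\S_n$ (and indeed surjective, as $Std(\sigma)=\sigma$ for every permutation $\sigma$, so every $\sigma$ contributes a nonempty fiber).
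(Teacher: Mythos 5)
Your argument is correct and is exactly the one the paper intends: the corollary is stated without proof precisely because it follows immediately from the bijection $\phi_\sigma$ of Lemma \ref{lemma25} (equivalently, from the poset isomorphism of Theorem \ref{theo23}), plus the observation that the fibers of $Std$ partition the packed words of length $n$. Nothing is missing.
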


\bibliographystyle{amsplain}
\bibliography{biblio2}

\end{document}